\def\circlenum#1{\text{\textcircled{\textbf{#1}}}}
\newcommand{\E}{\mathbb E}
\newcommand{\defi}{\text{def}}
\newcommand{\argmin}{\mathop{\rm arg\min}}
\newcommand{\bbR}{\mathbb{R}}
\newcommand{\w}{{\mathbf{w}}}
\renewcommand{\O}{{\mathcal{O}}}
\newcommand{\tO}{\tilde{\mathcal{O}}}
\newcommand{\beq}{\begin{equation}}
\newcommand{\eeq}{\end{equation}}
\newcommand{\beqa}{\begin{eqnarray}}
\newcommand{\eeqa}{\end{eqnarray}}
\newcommand{\beqas}{\begin{eqnarray*}}
\newcommand{\eeqas}{\end{eqnarray*}}
\newcommand{\bi}{\begin{itemize}}
\newcommand{\ei}{\end{itemize}}
\newcommand{\ba}{\begin{array}}
\newcommand{\ea}{\end{array}}
\def\argmin{{\rm argmin}}
\def\vgap{\vspace*{.1in}}
\def\exp{{\rm exp}}
\newcommand{\bbr}{\Bbb{R}}
\def\w{\omega}
\newtheorem{theorem}{Theorem}
\newtheorem{lemma}{Lemma}
\newtheorem{corollary}{Corollary}
\newtheorem{assumption}{Assumption}
\newtheorem{definition}{Definition}
\author[*]{Chaobing Song}
\author[+]{Ji Liu}
\author[$\ddagger$]{Yong Jiang}
\affil[*]{Tsinghua University, $\quad\quad$\texttt{\href{mailto:songcb16@mails.tsinghua.edu.cn}{\color{black}songcb16@mails.tsinghua.edu.cn}}}
\affil[+]{Rochester University, $\quad\quad$   \texttt{\href{mailto:ji.liu.uwisc@gmail.com}{\color{black}ji.liu.uwisc@gmail.com}}}
\affil[$\ddagger$]{Tsinghua University, $\quad\quad$   \texttt{\href{mailto:jiangy@mail.sz.tsinghua.edu.cn}{\color{black}jiangy@mail.sz.tsinghua.edu.cn}}}
\date{Feb 25, 2019}
\begin{document}

\title{Inexact Proximal Cubic Regularized Newton Methods for Convex Optimization}

  \maketitle

\begin{abstract}
In this paper, we use Proximal Cubic regularized Newton Methods (PCNM) to optimize  the sum of a smooth convex function and a non-smooth convex function, where we use inexact gradient and Hessian, and an inexact subsolver for the cubic regularized second-order subproblem. We propose inexact variants of PCNM and accelerated PCNM respectively, and show that both variants can achieve the same convergence rate as in the exact case, provided that the  errors in the inexact gradient, Hessian and subsolver decrease at appropriate rates. Meanwhile, in the online stochastic setting where data comes endlessly, we give the overall complexity of the proposed algorithms and show that they are as competitive as the stochastic gradient descent. Moreover, we give the overall complexity of the proposed algorithms in the finite-sum setting and show that it is as competitive as the state of the art variance reduced algorithms. Finally, we propose an efficient algorithm for the cubic regularized second-order subproblem, which can converge to an enough small neighborhood of the optimal solution in a superlinear rate.

 % and  $O(\epsilon^{-1.5})$ and $O(\epsilon^{-1})$ 
 % Hessian-vector multiplication evaluations respectively, where the result of 
 %  On the other hand, in the finite-sum convex setting,  

 % , and $O(\epsilon^{-2})$ stochastic gradient evaluations and $O(\epsilon^{-1})$ Hessian-vector multiplication evaluations,

\end{abstract}

\section{Introduction}
In this paper, we study the following composite optimization problem  
\begin{equation}
\min_{x\in\bbR^d}\left\{ F(x) \overset{\defi}{=}f(x)+h(x)\right\}, \label{eq: prob}
\end{equation}
where $f(x)$ is twice differentiable and convex, and $h(x)$ is simple, nonsmooth and convex. To solve \eqref{eq: prob}, the popular methods are first-order algorithms such as Proximal Gradient Method (PGM) and Accelerated Proximal Gradient Method (APGM) \cite{nesterov2007gradient}. Assume $x^* \overset{\defi}{=}\argmin_{x\in\bbR^d} F(x)$. In order to find an $\epsilon$-accurate solution $x$ such that $F(x)-F(x^*)\le \epsilon$, PGM needs $\mathcal{O}(\epsilon^{-1})$ iterations and  APGM need $\mathcal{O}(\epsilon^{-1/2})$ iterations, where  $\mathcal{O}(\epsilon^{-1/2})$ is the optimal rate for first-order methods. However, if we use the second-order information of $f(x)$, from \cite{nesterov2006cubic,nesterov2008accelerating,nunes2018accelerated}, we know that the Proximal Cubic regularized Newton Method (PCNM) with the following iterative procedure
\begin{equation}
x_{t+1}\overset{\defi}{=}\argmin_{x\in\bbR^d}\Big\{ f(x_t)+\langle \nabla f(x_t), x-x_t\rangle+\frac{1}{2}\langle \nabla^2 f(x_t)(x-x_t), x-x_t\rangle+\frac{\eta}{6}\|x-x_t\|^3+ h(x)\Big\}, \label{eq:cnm}
\end{equation}  
needs $\mathcal{O}(\epsilon^{-1/2})$ iterations to find an $\epsilon$-accurate solution, where $\nabla f(x_t)$ denotes the gradient at $x_t$, $\nabla^2 f(x_t)$ denotes the Hessian matrix at $x_t$ and $\eta$ is a parameter to be determined. Meanwhile the  accelerated PCNM (APCNM) only need $\mathcal{O}(\epsilon^{-1/3})$ iterations. Both iteration complexity results are better than that of PGM and APGM respectively. 

Although the iteration complexities of PCNM and APCNM are superior, unlike PGM and APGM, they are seldom used in the large-scale optimization where the problem size ($i.e.,$ the number of data samples $n$ and the dimension $d$) is often large. This is because when the problem size is large, computing the exact gradient and Hessian is very expensive; meanwhile solving the subproblem \eqref{eq:cnm} exactly needs matrix factorization or inversion which scales poorly with the problem size. 
As a result, although the iteration complexities are better, as the problem size becomes large, the overall complexity of PCNM and APCNM will not be as competitive as PGM and APGM respectively. In fact,  
 it is commonly believed that second-order methods such as PCNM and APCNM are only suitable for small-scale problems, and first-order methods such as PGM and APGM are superior in the large-scale setting.

In this paper, we consider the inexact variants of PCNM and APCNM and show the following result from a theoretical view: the overall complexity of second-order methods can be as competitive as that of first order algorithms in the large-scale setting or even in the online setting where data arrives endlessly. In fact, in the strongly convex setting, the overall complexity of APCNM will have a better dependence on the strong convexity constant than the state of the art stochastic gradient descent (SGD) algorithm. 
We obtain the competitive overall complexity results by using the inexact gradient and Hessian and finding an approximate solution of the subproblem \eqref{eq:cnm} with properly decreased errors. The proposed results implies that \emph{the order of the information we use may be not the factor that determines the scalability of algorithm, if we tune the factors in the corresponding subproblem (such as \eqref{eq:cnm} ) properly}.

In Section \ref{sec:result}, we review the related work and give the main results from four aspects:
\begin{itemize}
\item the research progress of  the inexact variants of Cubic regularized Newton Method (CNM);
\item  the overall complexity in the online setting;
\item the overall complexity in the finite-sum setting;
\item the proposed efficient subsolver for  \eqref{eq:cnm}. 
\end{itemize}
Then in Section \ref{sec:cubic}, we propose the Inexact Proximal Cubic regularized Newton Method (IPCNM) and give its theoretical analysis; in  Section \ref{sec:acc-cubic}, we propose Accelerated Inexact Proximal Cubic regularized Newton Method (AIPCNM) and  give its theoretical analysis; in  Section \ref{sec:cubic-svrg}, we propose the efficient Cubic Proximal Stochastic Variance Reduced Gradient (Cubic-Prox-SVRG) method and show that it can converge to a neighborhood of the optimal solution in a superlinear rate.

 % Because of the nonconvex optimization problem for deep neural network, the Cubic regularized Newton Method (CNM) \cite{nesterov2006cubic,nesterov2008accelerating,nunes2018accelerated} has obtained a lot of attention in recent years. However, most researches on second-order algorithms  assumed that we can access the function $f(x)$ directly \cite{cartis2012evaluation,cartis2013evaluation,ghadimi2017second} , or alternatively $f(x)$ has a finite-sum structure \cite{agarwal2017finding,kohler2017sub,zhou2018stochastic,chen2018adaptive}. To the best of our knowledge , only \cite{tripuraneni2018stochastic} explore the inexact variant of CNM in the online setting. In the nonconvex setting, the Stochastic Cubic Regularization method  \cite{tripuraneni2018stochastic} can find an $\epsilon$-second order stationary point in  $O(\epsilon^{-3.5})$ stochastic gradient and stochastic Hessian-vector product evaluations, which matches the best-known result.

 % The number of stochastic gradient evaluations $O(\epsilon^{-2})$ of the accelerated variant matches the optimal bound by SGD.
  % In the strongly convex setting, the non-accelerated variant and the accelerated variant
 % need $\mathcal{O}(\epsilon^{-1}\log \epsilon^{-1})$ stochastic gradient evaluations, and 

 % The sample complexity is the overall complexity in the online setting where data arrives endlessly. 

\section{Related Work and Main Results}\label{sec:result}
Before continue, we provide the notations and problem setting first. 
Let $I$ as the identity matrix with a proper size according to the context. 
Let $x^*$ as a minimizer of $F(x)$ and we say $x$ is an $\epsilon$-accurate solution if it satisfies $F(x)-F(x^*)\le \epsilon$. 
Throughout this paper, we use $\|\cdot\|$ to denote the Euclidean norm of a vector or the spectral norm of a matrix. Denote $\nabla F(x)\overset{\defi}{=}\nabla f(x)+h^{\prime}(x)$, where $h^{\prime}(x)\in \partial h(x)$. 
We use the big  $O$ notation $\mathcal{O}(\cdot)$ to denote the computational complexity and  $\tilde{\mathcal{O}}(\cdot)$ denote the complexity result that hide the ploy logarithmic terms.

\begin{definition}
For a function $F:\bbR^d\rightarrow \bbR$, $F(x)$ is $\sigma_2$-strongly convex if $\forall x, y \in\bbR^d$, $$F(y)\ge F(x)+\langle\nabla F(x), y-x\rangle+ \frac{\sigma_2}{2}\|y-x\|^2;$$
if $\sigma_2=0$, then $F(x)$ is only convex.
 % $F(x)$ is $\sigma$-strongly convex if $\forall x, y \in\bbR^d$, $F(y)\ge F(x)+\langle\nabla F(x), y-x\rangle + \frac{\sigma}{2}\|y-x\|^2.$ 
\end{definition}

\begin{definition}
For a function $f:\bbR^d\rightarrow \bbR$, $f(x)$ has $L_2$-Lipschitz gradients if $\;\forall x, y \in\bbR^d$, $$\|\nabla f(x) - \nabla f(y)\|\le L_2 \|x-y\|;$$  $f(x)$ has $L_3$-Lipschitz Hessians if $\forall x, y \in\bbR^d$,   $$\|\nabla^2 f(x) - \nabla^2 f(y)\|\le L_3 \|x-y\|.$$
\end{definition}

When the nonsmooth term $h(x)$ exists, $f(x)$ has $L_3$-Lipschitz Hessian and $F(x)$ is convex, by \cite{nunes2018accelerated}, APCNM can find an $\epsilon$-accurate solution with $\O(\epsilon^{-1/3})$ iterations.  Meanwhile by extending the result  of the smooth setting trivially \cite{nesterov2006cubic}, we can know that PCNM can find an $\epsilon$-accurate solution for \eqref{eq: prob} with $\O(\epsilon^{-1/2})$ iterations. Except \cite{nunes2018accelerated}, the existing researches of inexact variants of CNM mainly focus on the smooth setting where the nonsmooth term $h(x)$ does not exist. 

In the nonconvex setting,
 in order to reduce the high computational cost in optimizing the subproblem \eqref{eq:cnm} and maintain the convergence rate of the exact case at the same time, \cite{cartis2011adaptive,cartis2011adaptive2,kohler2017sub} considered a subsampling strategy to obtain inexact gradient and Hessian, and a termination condition for optimizing \eqref{eq:cnm} , while the conditions of subsampling depend on the further iteration and thus is implementable, and the termination condition is specific to the Lanzcos method \cite{carmon2018analysis}. \cite{zhou2018stochastic} used variance reduction strategy to reduce the complexity of computing the  gradient and Hessian, while the complexity to update the variance reduced gradient and Hessian is $O(d^2)$ and thus the SVRC method in \cite{zhou2018stochastic} is only suitable for the problem with small dimension $d$. \cite{tripuraneni2018stochastic} made a considerable progress that the stochastic cubic regularization method in \cite{tripuraneni2018stochastic} needs $\tO(\epsilon^{-3.5})$ stochastic gradient and stochastic Hessian-vector product evaluations to find an approximate local minima for general smooth, nonconvex functions, which matches the best known result,  while it did not give the analysis of the convex setting.  
\begin{table*}[ht]
\caption{Comparison of inexact cubic regularized Newton methods in the convex setting}
\label{tb:1}
\vskip 0.15in
\begin{center}
\begin{small}
\begin{tabular}{ccccccccr}
\toprule
  Cubic                       & Same             & Inexact  & Inexact    & Inexact          & Nonsmooth      \\
   Method                      & Rate?            & Hessian? & Gradient?  & Subsolver?      & Regularizer?     \\
\midrule
\cite{cartis2012evaluation} & {\large $\times$} 				& \Checkmark      & {\large $\times$}         & \Checkmark  			 &	{\large $\times$}		 \\
\cite{ghadimi2017second}    & {\large $\times$} 			& \Checkmark 	   & {\large $\times$} 	    & {\large $\times$} 					&{\large $\times$}	  \\
\cite{chen2018adaptive}    & \Checkmark  			& \Checkmark 	   & {\large $\times$} 	    & \Checkmark 			 &	{\large $\times$}		  \\
\textbf{This Paper}    				&\Checkmark	  		& \Checkmark 	   & \Checkmark 	    & \Checkmark 		  	&	\Checkmark 	  \\
\bottomrule
\end{tabular}
\end{small}
\end{center}
\vskip -0.1in
\end{table*}

 Compared with finding an $\epsilon$-second order stationary point of the nonconvex setting, in the convex setting, the goal of finding an $\epsilon$-accurate solution in terms of objective function results in extra difficulty. In this paper, we propose the inexact PCNM (IPCNM) and accelerated IPCNM (AIPCNM) as the inexact invariants of PCNM and APCNM respectively.
Table \ref{tb:1} gives the researches about inexact variants of CNM in the convex setting. As shown in Table \ref{tb:1}, only the algorithms in \cite{chen2018adaptive} and this paper can maintain the same convergence rate as the exact case; all the researches consider using inexact Hessian, while only this paper also use inexact gradient; \cite{cartis2011adaptive,chen2018adaptive} and this paper use inexact subsolver, while only the termination condition for subsolver in this paper is not specific to the Lanzcos method; finally, only the results in this paper are applicable to the case where the nonsmooth regularizer $h(x)$ exists.

In the following discussion, the cubic regularized second-order approximation function we need to optimize in each iteration is defined as
 \begin{eqnarray}
\tilde{f}_{\eta}(x;y)&\overset{\defi}{=}&f(y) +\langle g, x-y\rangle + \frac{1}{2}\langle H(x-y), x-x_t\rangle+\frac{\eta}{6}\|x-y\|^3 + h(x),\label{eq:subprob}
\end{eqnarray}
where $g$ is an inexact gradient on $y$,  $H_t\succeq 0$ is an inexact Hessian on $y$, $\eta>0$ is a parameter to be determined. Then we make Assumption \ref{ass:appro}.

\begin{assumption}\label{ass:appro}
Assume $\tilde{x}\overset{\rm def}{=}\argmin_{x\in\bbR^d}\tilde{f}_{\eta}(x;y)$.  
The subsolver we use  can find an $\epsilon$-accurate solution $z$ such that $\tilde{f}_{\eta}(z;y) - \tilde{f}_{\eta}(\tilde{x};y)\le \epsilon$ with at most $\O(\text{\rm cost}(H_tv)\log\frac{1}{\epsilon})$ cost, where $\text{\rm cost}(H_tv)$ denotes the cost of Hessian-vector products. 
\end{assumption}
From the convergence analysis \cite{carmon2018analysis}, it is known that the Lanzcos method can satisfy Assumption \ref{ass:appro} when the nonsmooth term $h(x)$ does not exist. In Section \ref{sec:cubic-svrg}, we propose the Cubic-Prox-SVRG method and show that it can converge to an enough small neighborhood of $\tilde{x}$ in a superlinear rate.

\subsection{Overall complexity in the online stochastic setting}
In the online stochastic setting where  data arrives sequentially and endlessly, $f(x)$ can be written as an expectation of the stochastic function $f(x;\epsilon)$, then the problem \eqref{eq: prob} is 
\begin{eqnarray}
\min_{x\in\bbR^d}\left\{ F(x) \overset{\defi}{=}f(x)+h(x)\overset{\defi}{=}\E_{\xi\sim \mathcal{D}}[f(x;\xi)]+h(x)\right\}, \label{eq: online-prob}
\end{eqnarray}
where $\xi$ is a random variable sampled from an underlying distribution $\mathcal{D}$. Meanwhile, we make two general assumptions \ref{ass:gradient-Hessian} and \ref{ass:g-H}. 
\begin{assumption}\label{ass:gradient-Hessian}
$\nabla f(x;\xi)$ satisfies $\forall x\in \bbR^d$, $$\E[\nabla f(x;\xi)] = \nabla f(x),\quad\quad  \E[\|\nabla f(x;\xi)-\nabla f(x)\|^2]\le \tau_1^2,\quad\quad\|\nabla f(x;\xi)-\nabla f(x)\|\le \gamma_1$$ almost surely; meanwhile,
$\nabla^2 f(x;\xi)$ satisfies  $\forall x\in \bbR^d$, $$\E[\nabla^2 f(x;\xi)] = \nabla^2 f(x),\quad\quad \| \E[(\nabla^2 f(x;\xi)-\nabla^2 f(x))^2]\|\le \tau_2^2,\quad\quad\|\nabla^2 f(x;\xi)-\nabla^2 f(x)\|\le \gamma_2$$ almost surely; moreover the cost of the stochastic Hessian-vector product $\nabla^2 f(x;\xi)v$ is not higher than the vector-vector inner product $\langle \nabla f(x;\xi), v\rangle$. 
\end{assumption}

\begin{assumption}\label{ass:g-H}
In the $t$-th iteration of IPCNM, we set $$g_t\overset{\rm def}{=}\frac{1}{\hat{n}_{t1}}\sum_{i=1}^{\hat{n}_{t1}} \nabla f(x;\xi),\quad\quad H_t\overset{\rm def}{=}\frac{1}{\hat{n}_{t2}}\sum_{i=1}^{\hat{n}_{t2}} \nabla^2 f(x;\xi),$$
where $\hat{n}_{t1}, \hat{n}_{t2}$ are the number of stochastic gradient samples and stochastic Hessian samples to be determined. Meanwhile, 
in the $t$-th iteration of AIPCNM, we set $$g_t\overset{\rm def}{=}\frac{1}{\bar{n}_{t1}}\sum_{i=1}^{\bar{n}_{t1}} \nabla f(x;\xi), \quad\quad  H_t\overset{\rm def}{=}\frac{1}{\bar{n}_{t2}}\sum_{i=1}^{\bar{n}_{t1}} \nabla^2 f(x;\xi)+\mu_t I,$$
where $\bar{n}_{t1}, \bar{n}_{t2}$ are the number of stochastic gradient samples and stochastic Hessian samples to be determined, $\mu_t>0$ is a parameter to be determined.
\end{assumption}
By Assumption \ref{ass:g-H}, from the $0$-th iteration to $t$-th iteration, in IPCNM, the number of stochastic gradient samples is $\sum_{i=1}^t \hat{n}_{i1}$, and the number of stochastic Hessian samples is $\sum_{i=1}^t \hat{n}_{i2}$; in AIPCNM, they are $\sum_{i=1}^t \bar{n}_{i1}$ and $\sum_{i=1}^t \bar{n}_{i2}$ 
respectively. Based on Assumption \ref{ass:appro}, in IPCNM, the total complexity of calling the subsolver will be 
at most $\O(\sum_{i=0}^{t} \hat{n}_{i2}\log\frac{1}{\epsilon_i})$ stochastic Hessian-vector products, where $\epsilon_i$ is the accuracy we need to attain in the solving procedure of the subproblem $\argmin_{x\in\bbR^d} \tilde{f}_{\eta}(x;y)$  of the $i$-th iteration. Correspondingly, in AIPCNM, the total cost of calling the subsolver will be  $\O(\sum_{i=0}^{t} \bar{n}_{i2}\log\frac{1}{\epsilon_i})$ stochastic Hessian-vector products. By Assumption \ref{ass:gradient-Hessian}, the cost of  stochastic Hessian-vector products is not higher than that of stochastic  gradient evaluation. Therefore we measure the overall complexity by the number of equivalent stochastic  gradient evaluations.

Table \ref{tb:2} gives the overall complexity of representative algorithms in the online stochastic setting. For simplicity, in Table \ref{tb:2}, we neglect the poly-logarithmic factor and use the $\tilde{O}$ notation. 
The existing algorithms under this setting are mainly first-order algorithms \cite{shalev2012online}, which  can be divided into methods that pass one sample or a fixed mini-batch samples in each iteration \cite{duchi2010composite,xiao2010dual}, and methods that use an increased sample size in each iteration \cite{byrd2012sample,friedlander2012hybrid,schmidt2011convergence}. If we do not consider the poly-logarithmic factor, COMID \cite{duchi2010composite} obtains the optimal convergence rate ($i.e,$ regret)  $\tO(\epsilon^{-2})$ in the convex setting and $\tO(\epsilon^{-1})$ in the $\sigma_2$-strongly convex setting. 
However, as shown in Table \ref{tb:2},  the Inexact Proximal Gradient  Method (IPGM) and Accelerated IPGM (IPGM) \cite{schmidt2011convergence}{\footnote{Although \cite{schmidt2011convergence} do not give the  overall complexity in the online stochastic setting, we can give the complexity results in Table \ref{tb:2} based on the same analysis in this paper.}}, which belongs to the methods with increased sample size, can not obtain the optimal rate in the convex setting. The proposed methods IPCNM and AIPCNM belongs to the methods with an increased sample size, while we use the second-order information. 

In Table \ref{tb:2}, it is shown that IPCNM and AIPCNM have better overall complexity than the corresponding IPGM and AIPGM respectively in the convex setting. AIPCNM can obtain the optimal rate in both convex and strongly convex setting. Particularly, AIPCNM has better dependence on the strong convexity constant $\sigma_2$ than COMID.

 % Although there are tons of researches about online learning \cite{shalev2012online,hazan2016introduction}, in the convex setting the optimal convergence rate ($i.e,$ regret) is $\tO(\epsilon^{-2})$, while in the $\sigma_2$-strongly convex setting, the optimal rate is $\tO(\sigma^{-1}\epsilon^{-1})$ \cite{hazan2014beyond}, both can be attained by the composite mirror descent \cite{duchi2010composite} and regularized dual averaging \cite{xiao2010dual} if we neglect the poly-logarithmic factor. Besides the SGD variants that pass one sample or a fixed mini-batch samples in each iteration, there are also gradient methods that use an increased sample size in each iteration \cite{byrd2012sample,friedlander2012hybrid,schmidt2011convergence}, which can also be used in the online stochastic setting Table \ref{tb:2} gives the comparision of the overall complexity for the representative algorithms measured by the number of stochastic gradient and stochastic Hessian-vector product evaulations and subsolver complexity. The subsolver complexity means the total complexity of calling the subsolver from the start of the iteration to the iteration that finds an $\epsilon$-accurate solution in terms of the number of equivalent stochastic gradient evaluations.    

\begin{table*}[th]
\caption{Overall complexity of algorithms in the online stochastic setting}
\label{tb:2}
\vskip 0.15in
\begin{center}
\begin{small}
\begin{tabular}{c|ccccr}
\toprule
Setting  &  Method                     &  $\#$-equivalent stochastic gradient evaluations                                \\
\midrule
Convex&COMID \cite{duchi2010composite} &  $\tilde{\mathcal{O}}(\epsilon^{-2})$ 			 \\
&IPGM \cite{schmidt2011convergence}     & $\tilde{\mathcal{O}}(\epsilon^{-3}) $				\\
&AIPGM \cite{schmidt2011convergence}    & $\tilde{\mathcal{O}}(\epsilon^{-5/2})$				\\
&IPCNM (\textbf{this paper})    & $\tilde{\mathcal{O}}(\epsilon^{-5/2})$ 				  \\
&AIPCNM (\textbf{this paper})    & $\tilde{\mathcal{O}}(\epsilon^{-2})$  	 \\
\midrule
Strongly &COMID \cite{duchi2010composite} &  $\tilde{\mathcal{O}}(\sigma_2^{-1}\epsilon^{-1})$ 			 \\
Convex&IPGM \cite{schmidt2011convergence}    & $\tilde{\mathcal{O}}(\sigma_2^{-1}\epsilon^{-1})$					\\
&IAPGM \cite{schmidt2011convergence}    & $\tilde{\mathcal{O}}(\sigma_2^{-3/2}\epsilon^{-1})$	   			\\
&IPCNM (\textbf{this paper})    & $\tilde{\mathcal{O}}(\sigma_2^{-5/6}\epsilon^{-4/3})$ 			  \\
&AIPCNM (\textbf{this paper})    & $\tilde{\mathcal{O}}(\sigma_2^{-2/3}\epsilon^{-1})$   \\

\bottomrule
\end{tabular}
\end{small}
\end{center}
\vskip -0.1in
\end{table*}

\subsection{Overall complexity in the finite-sum setting}
In the finite-sum setting where $f(x)$ has a finite-sum structure, the problem \eqref{eq: prob} can be written as 
\begin{eqnarray}
\min_{x\in\bbR^d}\left\{ F(x) \overset{\defi}{=}f(x)+h(x)\overset{\defi}{=}\frac{1}{n}\sum_{i=1}^n f_i(x)+h(x)\right\}. \label{eq: online-prob}
\end{eqnarray}
In the finite-sum setting, we also assume that Assumptions \ref{ass:gradient-Hessian} and \ref{ass:g-H} hold. 
Because if the number of samples is $n$, then we can obtain the exact gradient or Hessian, in the finite-sum setting, the number of samples for gradient and Hessian is at most $n$.

To solve \eqref{eq: online-prob}, the state of the art algorithms are based on the well-known variance reduction technique \cite{johnson2013accelerating,xiao2014proximal,allen2017katyusha}. In Table \ref{tb:2}, we use SVRG and Katyusha as the representative algorithms of non-accelerated variance reduced method and accelerated variance reduced method respectively. 

As shown in Table \ref{tb:2}, an important advantage of AIPCNM and AIPCNM is they do not need to pass all the data if we only want a low-accurate solution. Meanwhile, in the convex setting, the AIPCNM method has a faster rate $\tO(\epsilon^{-1/3})$, therefore it can obtain a high-accuracy solution faster than optimal gradient method Katyusha. In the strongly convex setting, the AIPCNM method has a better dependence on the strong convexity constant $\sigma_2$.

\begin{table*}[th]
\caption{Comparision of representative algorithms in the finite-sum setting}
\label{tb:2}
\vskip 0.15in
\begin{center}
\begin{small}
\begin{tabular}{c|ccccr}
\toprule
Setting  &  Method                     &  $\#$-equivalent stochastic gradient evaluations                   \\
  % &                  & gradient           & vector product    & complexity       \\
\midrule
Convex &SVRG \cite{johnson2013accelerating}     & $\tilde{\mathcal{O}}(n\epsilon^{-1}) $			\\
&Katyusha \cite{allen2017katyusha}   & $\tilde{\mathcal{O}}(n\epsilon^{-1/2})$	   			\\
&IPCNM (\textbf{this paper})    & $\min\{\tilde{\mathcal{O}}(\epsilon^{-5/2}),\tilde{\mathcal{O}}({n\epsilon^{-1/2}})\} $ 			  \\
&AIPCNM (\textbf{this paper})    & $\min\{\tilde{\mathcal{O}}(\epsilon^{-2}), \tilde{\mathcal{O}}(n\epsilon^{-1/3})\}$  		\\
\midrule
Strongly &SVRG \cite{johnson2013accelerating}    & $\tilde{\mathcal{O}}(n+n\sigma_2^{-1})$				\\
Convex&Katyusha \cite{allen2017katyusha}    & $\tilde{\mathcal{O}}(n+n^{1/2}\sigma_2^{-1/2})$   			\\
&IPCNM (\textbf{this paper})    & $\min\left\{\tilde{\mathcal{O}}(\sigma_2^{-5/6}\epsilon^{-4/3}), \tilde{\mathcal{O}}(\sigma_2^{-1/2}n)\right\}$ 					  \\
&AIPCNM (\textbf{this paper})    & $\min\left\{\tilde{\mathcal{O}}(\sigma_2^{-2/3}\epsilon^{-1}),\tilde{\mathcal{O}}(\sigma_2^{-1/3}n)\right\} $  \\

\bottomrule
\end{tabular}
\end{small}
\end{center}
\vskip -0.1in
\end{table*}
\subsection{Efficient subsolver for the cubic regularized second-order subproblem}
In Section \ref{sec:cubic-svrg}, we propose the Cubic-Prox-SVRG method to solve the subproblem $\min_{x\in\bbR^d}\tilde{f}_{\eta}(x;y)$ by exploring the finite-sum structure in the inexact Hessian and the uniform convexity of the cubic regularizer $\frac{1}{3}\|\cdot\|^3.$ Because it can converge to an enough small neighborhood of the optimal solution in a superlinear rate (``enough'' means the approximate solution satisfies the need of IPCNM and AIPCNM), in the convex setting, it is a good alternative to the well-known Lanczos method which only has a linear rate \cite{carmon2018analysis}.
\section{The inexact proximal cubic regularized Newton method}\label{sec:cubic}

\begin{algorithm}[htb]
   \caption{Inexact proximal cubic regularized Newton method (IPCNM)}
   \label{alg:icnm}
\begin{algorithmic}[1]
	\STATE Input: $x_0\in\bbR^d$ and  $\eta = 3L_3$
    \FOR{t=0,1,2,...}
    \STATE Compute an inexact gradient $g_t$ and an inexact Hessian $H_t$ on $x_t$ 
    \STATE Compute an inexact solution $x_{t+1}$ of the subproblem $\min_{x\in\bbR^d}\tilde{f}_{\eta}(x;x_t)$
    % \begin{eqnarray}
    % \!\!\!\!\!\!\!\!\!\!\!\!\!\!\!\!\tilde{f}_{\eta}(x;x_t)&\overset{\defi}{=}&f(x_t) +\langle g_t, x-x_t\rangle \nonumber\\
    % &&\!\!\!\!\!\!\!\!\!\!\!\!\!\!\!\!\!\!\!\!\!\!\!\!\!\!\!\!\!\!\!\!\!\!\!\!\!+ \frac{1}{2}\langle H_t(x-x_t), x-x_t\rangle+\frac{\eta}{6}\|x-x_t\|^3 + h(x)\label{eq:subprob}
    % \end{eqnarray}
    \ENDFOR
\end{algorithmic}\label{alg:icnm}
\end{algorithm}

In this section, we propose  IPCNM  in Algorithm \ref{alg:icnm}. For Algorithm \ref{alg:icnm}, we have Lemma \ref{lem:basic-f} which is key to the convergence result.
\begin{lemma}\label{lem:basic-f}
Let $\{x_t\}_{t\ge0}$ be generated by Algorithm \ref{alg:icnm} and $x_{t+1}^* = \argmin_{x\in \bbR^d}\tilde{f}_{\eta_t}(x; x_t)$. 
Define the approximation error 
\begin{equation}
E_t \overset{\rm def}{=} \frac{4}{3L_3^2}\|\nabla^2 f(x_t) - H_t\|^3
+ \frac{4}{3}\left(\frac{2}{L_3}\right)^{1/2}\|\nabla f(x_t) - g_t\|^{3/2} + (\tilde{f}_{\eta_t}(x_{t+1}; x_t) -\tilde{f}_{\eta_t}(x_{t+1}^*; x_t)).\label{eq:E-t}
\end{equation}
Then $\forall 0\le \alpha_t\le 1$, if $F(x)$ is convex and has $L_3$-Lipschitz Hessians,  it follows that for $t\ge1$
\begin{equation}
F(x_{t})\le F(\alpha_{t-1}x^* + (1-\alpha_{t-1})x_t)+L_3\alpha_{t-1}^3\|x_{t-1}-x^*\|^3+E_{t-1}. \label{eq:meta}
\end{equation}
\end{lemma}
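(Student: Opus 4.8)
The plan is to prove the one-step estimate $F(x_{t+1}) \le F(\alpha x^* + (1-\alpha)x_t) + L_3\alpha^3\|x_t - x^*\|^3 + E_t$ for every $0\le\alpha\le1$, which gives the claim after shifting the index $t\mapsto t-1$. The backbone is the classical cubic-Newton sandwich: bound $F(x_{t+1})$ from above by a quadratic-plus-cubic model at the new point, pass to the approximate minimizer $x_{t+1}^*$ and then to the comparison point $y = \alpha x^* + (1-\alpha)x_t$ via the minimizer property, and finally bound the model at $y$ from above by $F(y)$ plus a cubic term. The only new feature compared with the exact analysis is that the model carries the inexact data $g_t,H_t$ and weight $\eta=3L_3$ in place of $\nabla f(x_t),\nabla^2 f(x_t)$ and $L_3$; the whole point is that the surplus cubic weight buys exactly enough slack to absorb the errors.

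First I would record the two-sided consequence of the $L_3$-Lipschitz Hessian: writing $Q_t(x) = f(x_t) + \langle\nabla f(x_t), x-x_t\rangle + \tfrac12\langle\nabla^2 f(x_t)(x-x_t), x-x_t\rangle$, Taylor's theorem gives $|f(x)-Q_t(x)|\le\tfrac{L_3}{6}\|x-x_t\|^3$ for all $x$. Adding $h$, the upper side gives $F(x)\le Q_t(x)+\tfrac{L_3}{6}\|x-x_t\|^3+h(x)$, and the lower side gives $Q_t(x)\le f(x)+\tfrac{L_3}{6}\|x-x_t\|^3$. I would then compare $Q_t+h+\tfrac{L_3}{6}\|\cdot-x_t\|^3$ with the inexact model $\tilde{f}_{\eta}(\cdot;x_t)$: their difference equals $\langle\nabla f(x_t)-g_t,\,\cdot-x_t\rangle + \tfrac12\langle(\nabla^2 f(x_t)-H_t)(\cdot-x_t),\,\cdot-x_t\rangle - \tfrac{L_3}{3}\|\cdot-x_t\|^3$, where the $-\tfrac{L_3}{3}$ is precisely $\tfrac{L_3-\eta}{6}$.

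The key device is the Young/AM-GM bound, valid for every displacement $r=\|x-x_t\|$: using $\langle\nabla f(x_t)-g_t,x-x_t\rangle\le\|\nabla f(x_t)-g_t\|\,r$ and $\tfrac12\langle(\nabla^2 f(x_t)-H_t)(x-x_t),x-x_t\rangle\le\tfrac12\|\nabla^2 f(x_t)-H_t\|\,r^2$, splitting the surplus cubic evenly as $\tfrac{L_3}{3}=\tfrac{L_3}{6}+\tfrac{L_3}{6}$ and invoking $ar\le\tfrac{L_3}{6}r^3+\tfrac{2\sqrt2}{3\sqrt{L_3}}a^{3/2}$ and $\tfrac12 br^2\le\tfrac{L_3}{6}r^3+\tfrac{2}{3L_3^2}b^3$ produces exactly \emph{half} of the gradient and Hessian terms of $E_t$. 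Applying this at $x=x_{t+1}$ yields $F(x_{t+1})\le\tilde{f}_{\eta}(x_{t+1};x_t)+\tfrac12(\text{grad}+\text{Hess terms of }E_t)$; then $\tilde{f}_{\eta}(x_{t+1};x_t)\le\tilde{f}_{\eta}(x_{t+1}^*;x_t)+(\text{subsolver term})\le\tilde{f}_{\eta}(y;x_t)+(\text{subsolver term})$ by the definition of $x_{t+1}^*$ and the subsolver error in $E_t$; and the comparison in the reverse direction at $x=y$, where the model's cubic $\tfrac{\eta}{6}\|y-x_t\|^3=\tfrac{L_3}{2}\rho^3$ combines with the $\tfrac{L_3}{6}\rho^3$ Taylor remainder into $\tfrac{2L_3}{3}\rho^3$ (with $\rho=\|y-x_t\|=\alpha\|x^*-x_t\|$), gives $\tilde{f}_{\eta}(y;x_t)\le F(y)+L_3\rho^3+\tfrac12(\text{grad}+\text{Hess terms of }E_t)$. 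Chaining the three inequalities, the two halves combine into the full gradient and Hessian contributions, reproducing $E_t$ together with the cubic $L_3\alpha^3\|x^*-x_t\|^3$.

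The main obstacle is the one subtle accounting point: the errors are paid at two places — once converting $F(x_{t+1})$ into the model at the new iterate, once converting the model at $y$ back into $F(y)$ — so a naive estimate would double $E_t$. The resolution is quantitative rather than structural: with the cubic surplus $\tfrac{L_3}{3}$ split evenly as $\tfrac{L_3}{6}+\tfrac{L_3}{6}$, each of the two conversions contributes exactly one half of the sharp constants $\tfrac{4}{3L_3^2}$ and $\tfrac43(2/L_3)^{1/2}$ in $E_t$, so the two contributions sum to precisely those constants and not to twice them. I would therefore take care to verify the two Young inequalities with their optimal constants, since the whole estimate hinges on the surplus cubic from $\eta=3L_3$ being just large enough to carry both error payments while still leaving the clean $L_3\rho^3$ term at the comparison point.
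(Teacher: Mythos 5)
Your proposal is correct and follows essentially the same route as the paper's proof: the same three-inequality chain (Taylor upper bound at $x_{t+1}$, optimality of $x_{t+1}^*$ plus the subsolver gap, and the reverse Taylor/model comparison at the convex combination $y=\alpha x^*+(1-\alpha)x_t$), with the same Young-type inequalities producing the constants $\tfrac{2}{3L_3^2}$ and $\tfrac{2}{3}(2/L_3)^{1/2}$ twice so that they sum to the coefficients in $E_t$, and the surplus cubic from $\eta=3L_3$ absorbing the $\tfrac{L_3}{6}r^3$ remainders exactly as in the paper. No gaps; the accounting of the cubic terms and of the two half-contributions to $E_t$ matches the paper's computation.
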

In Lemma \ref{lem:basic-f},  $E_t$ is the approximate error by the inexact gradient $g_t$, the inexact Hessian $H_t$ and the inexact solution $x_{t+1}$ of the subproblem \eqref{eq:subprob}.
 % When all the $g_t, H_t, x_{t+1}$ are exact, we have $E_t=0$. If $E_t=0$, then by the proof of \citep[Theorems 4, 5]{nesterov2006cubic}, we can find an $\epsilon$-accurate solution with $O(1/\sqrt{\epsilon})$ iterations in the general convex setting and has a superlinear convergence rate in the strongly convex setting. However, by observing the right hand side of \eqref{eq:meta}, it shows that if $E_t = O(L_3\alpha_t^3\|x_t-x^*\|^3)$, then \eqref{eq:meta} is equivalent to $$F(x_{t+1})\le F(x)+L_3^{\prime}\alpha_t^3\|x_t-x^*\|^3,$$ where $L_3^{\prime} \overset{\defi}{=} L_3 + \frac{E_t}{\alpha_t^3\|x_t-x^*\|^3}$ is a relaxed constant. Therefore, if $E_t = O(L_3\alpha_t^3\|x_t-x^*\|^3)$, we can obtain the convergence rate with the same order of the case $E_t=0$. By setting $E_t = O(L_3\alpha_t^3\|x_t-x^*\|^3)$, according to the definition of $E_t$ in \eqref{eq:E-t},  we only need
 % $\|\nabla^2 f(x_t) - H_t\| = O(L_3\|x_t-x\|) , \|\nabla f(x_t) - g_t\|=O(L_3^2\|x_t-x\|)$ and $\tilde{f}_{\eta_t}(x_{t+1}; x_t) -\tilde{f}_{\eta_t}(x_{t+1}^*; x_t) = O(L_3^2\|x_t-x\|^3)$. 
To continue, we give the general assumption that the Euclidean distance between the iterates $\{x_t\}_{t\ge0}$ and $x^*$ is bounded by a constant.  
\begin{assumption}\label{ass:x}
Let $\{x_t\}_{t\ge0}$ be generated by Alg. \ref{alg:icnm}. Then, there exists $D>0$ such that $\forall t\ge0$, $\|x_t-x^*\|\le D$.
\end{assumption}

Then based on Lemma \ref{lem:basic-f} and Assumption \ref{ass:x} 
, Theorems \ref{thm:nonstrong} and \ref{thm:strong}  gives the convergence result in the  convex setting and strongly convex setting respectively.
\begin{theorem}[The convex setting]\label{thm:nonstrong}
Suppose that Assumption \ref{ass:x} holds and $E_t$ is defined in Lemma \ref{lem:basic-f}. For the convex function $F(x)$ in \eqref{eq: prob} 
, then it follows that for $t\ge 1$, 
\begin{eqnarray}
F(x_{t}) - F(x^*)\le\frac{27L_3D^3}{(t+1)(t+2)} + \frac{1}{t(t+1)(t+2)}  \sum_{i=1}^{t} i(i+1)(i+2)E_i.\nonumber\label{eq:nonstrong-2}
\end{eqnarray}
\end{theorem}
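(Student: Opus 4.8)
The plan is to collapse the one-step meta-inequality of Lemma~\ref{lem:basic-f} into a scalar recursion for the optimality gap $\delta_t := F(x_t)-F(x^*)$ and then solve that recursion by a weighted telescoping sum with cubic weights. First I would invoke convexity of $F$ on the convex-combination term in \eqref{eq:meta} (reading the combination as $\alpha_{t-1}x^*+(1-\alpha_{t-1})x_{t-1}$), bounding it by $\alpha_{t-1}F(x^*)+(1-\alpha_{t-1})F(x_{t-1})$, and use Assumption~\ref{ass:x} to replace $\|x_{t-1}-x^*\|^3$ by $D^3$. Subtracting $F(x^*)$ from both sides gives, for every $t\ge1$ and every admissible $\alpha_{t-1}\in[0,1]$,
\[
\delta_t \le (1-\alpha_{t-1})\,\delta_{t-1} + L_3\alpha_{t-1}^3 D^3 + E_{t-1}.
\]

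Next I would fix the free parameters to force telescoping. Introduce the weights $A_t := t(t+1)(t+2)$ and set $\alpha_{t-1}:=3/(t+2)$; then $A_t\alpha_{t-1}=3t(t+1)=A_t-A_{t-1}$, so $A_t(1-\alpha_{t-1})=A_{t-1}$. Multiplying the recursion through by $A_t$ yields $A_t\delta_t \le A_{t-1}\delta_{t-1}+L_3D^3A_t\alpha_{t-1}^3+A_tE_{t-1}$, and summing from $1$ to $t$ telescopes the gap terms. Since $A_0=0$, the initial gap $\delta_0$ disappears, leaving
\[
A_t\delta_t \le L_3 D^3\sum_{\tau=1}^{t}A_\tau\alpha_{\tau-1}^3 \;+\; \sum_{\tau=1}^{t}A_\tau E_{\tau-1}.
\]
I would also check that each $\alpha_{t-1}=3/(t+2)$ lies in $[0,1]$ for $t\ge1$ (with $\alpha_0=1$), so that Lemma~\ref{lem:basic-f} legitimately applies at every step.

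It then remains to bound the two sums and divide by $A_t$. For the cubic accumulation, $A_\tau\alpha_{\tau-1}^3 = 27\,\tau(\tau+1)/(\tau+2)^2 \le 27$ because $\tau(\tau+1)<(\tau+2)^2$, hence $\sum_{\tau=1}^t A_\tau\alpha_{\tau-1}^3\le 27t$; dividing $27L_3D^3 t$ by $A_t=t(t+1)(t+2)$ produces exactly the leading term $27L_3D^3/((t+1)(t+2))$. The error sum divides out directly into $\frac{1}{t(t+1)(t+2)}\sum_{\tau=1}^{t} \tau(\tau+1)(\tau+2)E_{\tau-1}$, which, after the index reconciliation noted below, is the second term of the claimed bound.

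I expect the load-bearing step to be the weight design rather than any single estimate: the weights must simultaneously satisfy the telescoping identity $A_t(1-\alpha_{t-1})=A_{t-1}$, keep $\alpha_{t-1}\le1$ so that the meta-inequality is applicable, and keep the cubic accumulation $\sum_{\tau}A_\tau\alpha_{\tau-1}^3$ only of order $t$ (cheaper growth would spoil the $1/t^2$-type leading rate, and indeed $p=3$ minimizes the leading constant among polynomial weights $A_t\sim t^p$). Verifying that the cubic choice makes $A_\tau\alpha_{\tau-1}^3$ uniformly bounded is the one genuinely essential inequality. The only remaining subtlety is bookkeeping of the error index---whether $E_{\tau-1}$ or $E_{\tau}$ accumulates with weight $A_\tau$---which must be reconciled with the stated sum $\sum_{i=1}^t i(i+1)(i+2)E_i$; this is where care with the indexing in Lemma~\ref{lem:basic-f} is required.
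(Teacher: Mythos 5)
Your proof is correct and follows essentially the same route as the paper: the same convexity step applied to \eqref{eq:meta}, the same choice $\alpha_{t-1}=3/(t+2)$ (equivalently $\alpha_t=3/(t+3)$), and the same cubic-weight telescoping, just phrased multiplicatively with $A_t=t(t+1)(t+2)$ instead of dividing by $\prod_i(1-\alpha_i)$. The index mismatch you flag ($E_{\tau-1}$ versus the stated $E_i$) is real but is present in the paper's own proof as well, whose final display likewise accumulates $(i+1)(i+2)(i+3)E_i$ for the bound on $F(x_{t+1})$, i.e.\ $E_{i-1}$-weights after shifting to $F(x_t)$.
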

By Theorem \ref{thm:nonstrong}, if $E_i=\O\left(\frac{1}{(i+2)^3}\right)$, then IPCNM can converge to an $\O(1/t^2)$-accurate solution after the $t$-th iteration. 
\begin{theorem}[The strongly convex setting]\label{thm:strong}
Suppose that Assumption \ref{ass:x} hold, $E_t$ is defined in Lemma \ref{lem:basic-f}. Assume for $t\ge 0$, 
\begin{equation}
\alpha =  \min\left\{\frac{1}{3}, \sqrt{\frac{\sigma_2}{3L_3D}}\right\}.
\end{equation}
Then for the $\sigma_2$-strongly convex function $F(x)$, it follows that for $t\ge 1$, 
\begin{eqnarray}
F(x_{t}) - F(x^*)\le  (1-\alpha)^t\left(F(x_{0}) - F(x^*)\right) +(1-\alpha)^t \sum_{i=0}^{t-1}\frac{E_i}{(1-\alpha)^i}.\nonumber
\end{eqnarray}
\end{theorem}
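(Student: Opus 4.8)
The plan is to specialize Lemma~\ref{lem:basic-f} to the constant step $\alpha_{t-1}=\alpha$ and turn the resulting bound into a one-step contraction for the objective gap $F(x_t)-F(x^*)$. Reading the convex combination in Lemma~\ref{lem:basic-f} as being taken between $x^*$ and $x_{t-1}$, I would first apply the $\sigma_2$-strong-convexity inequality in the form
\[
F(\alpha x^* + (1-\alpha)x_{t-1}) \le \alpha F(x^*) + (1-\alpha)F(x_{t-1}) - \frac{\sigma_2}{2}\alpha(1-\alpha)\|x_{t-1}-x^*\|^2,
\]
and substitute it into the estimate of Lemma~\ref{lem:basic-f}. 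Subtracting $F(x^*)$ from both sides, and rewriting the cubic term $L_3\alpha^3\|x_{t-1}-x^*\|^3$ as $L_3\alpha^3\|x_{t-1}-x^*\|\cdot\|x_{t-1}-x^*\|^2$ so that it can be merged with the negative quadratic contribution, yields
\[
F(x_t)-F(x^*) \le (1-\alpha)\big(F(x_{t-1})-F(x^*)\big) + \Big( L_3\alpha^3\|x_{t-1}-x^*\| - \frac{\sigma_2}{2}\alpha(1-\alpha) \Big)\|x_{t-1}-x^*\|^2 + E_{t-1}.
\]

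The crux of the argument, and the step I expect to require the most care, is showing that the bracketed coefficient is non-positive so that the entire $\|x_{t-1}-x^*\|^2$ term may be discarded. Here I would invoke Assumption~\ref{ass:x} to replace $\|x_{t-1}-x^*\|$ by its uniform bound $D$, reducing the requirement to $L_3\alpha^2 D \le \frac{\sigma_2}{2}(1-\alpha)$. Since the prescribed step obeys $\alpha\le\frac13$, one has $1-\alpha\ge\frac23$, so it suffices to guarantee $L_3\alpha^2 D \le \frac{\sigma_2}{3}$, i.e.\ $\alpha\le\sqrt{\sigma_2/(3L_3D)}$. This is exactly what the choice $\alpha=\min\{\frac13,\sqrt{\sigma_2/(3L_3D)}\}$ secures; the delicate point is simply to track these two competing constraints on $\alpha$ simultaneously and verify that the stated minimum satisfies both, after which the quadratic term drops out.

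With that term removed, the inequality collapses to the linear recursion $F(x_t)-F(x^*) \le (1-\alpha)\big(F(x_{t-1})-F(x^*)\big) + E_{t-1}$, valid for every $t\ge 1$. Finally I would close the proof by induction on $t$: iterating this one-step bound down to index $0$ accumulates the geometric factor $(1-\alpha)^t$ on the initial gap $F(x_0)-F(x^*)$ together with a geometrically weighted sum of the errors $E_0,\dots,E_{t-1}$. Collecting these weights reproduces the two terms on the right-hand side of the claimed inequality, namely $(1-\alpha)^t\big(F(x_0)-F(x^*)\big)$ plus $(1-\alpha)^t\sum_{i=0}^{t-1}E_i/(1-\alpha)^i$, which completes the argument.
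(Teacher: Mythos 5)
Your proposal is correct and follows essentially the same route as the paper's proof: apply Lemma~\ref{lem:basic-f} with the convex combination of $x^*$ and the current iterate, use $\sigma_2$-strong convexity to generate the negative quadratic term, invoke Assumption~\ref{ass:x} to show the choice $\alpha=\min\{\tfrac13,\sqrt{\sigma_2/(3L_3D)}\}$ makes the coefficient of $\|x_{t-1}-x^*\|^2$ non-positive, and then telescope the resulting linear recursion. The only difference is cosmetic (an index shift and your explicit two-case check of the constraint on $\alpha$, which the paper leaves implicit).
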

By Theorem \ref{thm:strong},  if $E_i=\O\left(\frac{(1-\alpha)^i}{t}\right)$, then IPCNM can converge to an $\O((1-\alpha)^t)$-accurate solution after the $t$-th iteration. In Theorem \ref{thm:superlinear}, we also show that IPCNM has a local superlinear  rate.
\begin{theorem}[Local superlinear convergence rate]\label{thm:superlinear}
Define $\omega \overset{\rm def}{=} \frac{1}{L_3^2}\left(\frac{\sigma_2}{2}\right)^3$.
Assume that $t_0$ is the minimal integer such that $F(x_{t_0}) - F(x^*)\le \frac{2}{3}\omega$.  Then for $t\ge t_0$, by setting $$E_t\le\frac{\omega}{2}\left(2/3\right)^{(3/2)^{t-t_0+1}},$$ we have for $t\ge t_0$, 
\begin{eqnarray}
F(x_{t})-F(x^*)\le \omega(2/3)^{(3/2)^{t-t_0}}.\nonumber
\end{eqnarray}
\end{theorem}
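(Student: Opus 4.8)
The plan is to collapse the theorem to a one-dimensional recurrence for the objective gap $\Delta_t \overset{\rm def}{=} F(x_t)-F(x^*)$ and then to close a double-exponential induction. First I would specialize the meta-inequality \eqref{eq:meta} of Lemma \ref{lem:basic-f} to the extreme admissible choice $\alpha_{t-1}=1$ (the hypothesis only asks for $0\le\alpha_{t-1}\le1$). With $\alpha_{t-1}=1$ the convex combination reduces to $x^*$, so that
\[
\Delta_t \le L_3\|x_{t-1}-x^*\|^3 + E_{t-1}.
\]

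Next I would trade the distance $\|x_{t-1}-x^*\|$ for the gap $\Delta_{t-1}$ using strong convexity: since $0\in\partial F(x^*)$, the $\sigma_2$-strong convexity of $F$ gives $\tfrac{\sigma_2}{2}\|x_{t-1}-x^*\|^2\le\Delta_{t-1}$, hence $\|x_{t-1}-x^*\|^3\le(2/\sigma_2)^{3/2}\Delta_{t-1}^{3/2}$. Substituting produces the self-improving recurrence
\[
\Delta_t \le L_3\left(\tfrac{2}{\sigma_2}\right)^{3/2}\Delta_{t-1}^{3/2} + E_{t-1}.
\]
At this point the role of $\omega=\frac{1}{L_3^2}(\sigma_2/2)^3$ becomes transparent: because $L_3(2/\sigma_2)^{3/2}\omega^{1/2}$ is a pure dimensionless constant, normalizing by $a_t\overset{\rm def}{=}\Delta_t/\omega$ turns the recurrence into $a_t\le\kappa\,a_{t-1}^{3/2}+E_{t-1}/\omega$ with an explicit $O(1)$ constant $\kappa$.

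Finally I would induct on $t\ge t_0$ against the target $a_t\le(2/3)^{(3/2)^{t-t_0}}$. The base case is the definition of $t_0$, which gives $a_{t_0}\le2/3=(2/3)^{(3/2)^0}$; the key qualitative fact is $a_{t_0}<1$, so that the map $a\mapsto\kappa a^{3/2}$ is contractive in this regime and seeds the superlinear decay. In the inductive step, raising the hypothesis $a_{t-1}\le(2/3)^{(3/2)^{t-1-t_0}}$ to the power $3/2$ promotes the exponent $(3/2)^{t-1-t_0}$ to exactly $(3/2)^{t-t_0}$, which is the target exponent; the prescribed schedule $E_{t-1}\le\frac{\omega}{2}(2/3)^{(3/2)^{t-t_0}}$ is calibrated precisely so that the leftover error is of the same double-exponential order and folds back into the bound.

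I expect the main obstacle to be the constant bookkeeping in this last step. Because the $3/2$-power map can essentially saturate the inductive bound, the additive errors must be absorbed without inflating the prefactor, which forces one to track the numerical constants (the $\tfrac12$ in the error schedule against $\kappa$) very carefully and, if the direct hypothesis is too tight, to carry a slightly strengthened inductive hypothesis with built-in slack. A secondary check is to confirm that $\alpha_{t-1}=1$ is genuinely the right choice throughout the local phase: any $\alpha<1$ reintroduces a linear term $(1-\alpha)\Delta_{t-1}$ that would immediately destroy superlinearity, so the argument must verify that clamping $\alpha$ to $1$ is optimal exactly once $\Delta_t\le\frac23\omega$.
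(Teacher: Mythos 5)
Your proposal follows the paper's proof essentially verbatim: set $\alpha_{t-1}=1$ in Lemma \ref{lem:basic-f}, convert $\|x_{t-1}-x^*\|^3$ into $\left(\tfrac{2}{\sigma_2}(F(x_{t-1})-F(x^*))\right)^{3/2}$ via strong convexity, normalize by $\omega$, and close the double-exponential induction with the base case supplied by the definition of $t_0$. The one delicate point you flag --- whether the normalized coefficient $\kappa$ in $a_t\le\kappa a_{t-1}^{3/2}+E_{t-1}/\omega$ is at most $\tfrac12$ so that the error schedule folds back into the target bound --- is precisely the step the paper's own proof asserts (it rearranges to coefficient $\tfrac12$), so your plan is sound and identical in substance.
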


Finally, we give Corollaries  \ref{thm:acc-online} and \ref{thm:acc-finite} to show the overall complexity in the online stochastic setting. 
\begin{corollary}[The online stochastic setting]\label{thm:online}
Suppose that Assumptions \ref{ass:gradient-Hessian} and \ref{ass:g-H} hold. Then if $F(x)$ is convex, then with the probability $1-\delta$, IPCNM can find an $\epsilon$-accurate solution in at most  $$\tO(\epsilon^{-5/2}) \text{  {\rm equivalent stochastic gradient iterations.}} $$

If $F(x)$ is $\sigma_2$-strongly convex, IPCNM can find an  $\epsilon$-accurate solution in at most  $$\tO(\sigma^{-5/6}\epsilon^{-4/3}) \text{  {\rm equivalent stochastic gradient iterations.}}$$
\end{corollary}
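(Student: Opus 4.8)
The plan is to turn the iteration-level guarantees of Theorems \ref{thm:nonstrong} and \ref{thm:strong} into a sampling budget and then sum the per-iteration sampling cost. First I would fix the number of outer iterations $T$ needed to drive the \emph{deterministic} part of each bound below $\epsilon$. In the convex case Theorem \ref{thm:nonstrong} forces $\frac{27L_3D^3}{(T+1)(T+2)}\le \epsilon/2$, i.e. $T=\O(\epsilon^{-1/2})$, and the residual error term stays at $\O(1/T^2)$ as long as $E_i=\O((i+2)^{-3})$ for every $i$. In the strongly convex case Theorem \ref{thm:strong} with $\alpha=\min\{1/3,\sqrt{\sigma_2/(3L_3D)}\}=\Theta(\sqrt{\sigma_2})$ gives $(1-\alpha)^T\le\epsilon$ at $T=\tO(\sigma_2^{-1/2})$, and keeps the accumulated error at $\O((1-\alpha)^T)$ as long as $E_i=\O((1-\alpha)^i/T)$, exactly as in the remark following the theorem.

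Next I would translate each target $E_i$ into tolerances on the three error sources appearing in \eqref{eq:E-t}. Since $E_i$ is a sum of three nonnegative terms, it suffices to make each of them $\O(E_i)$, which requires $\|\nabla^2 f(x_i)-H_i\|=\O(E_i^{1/3})$, $\|\nabla f(x_i)-g_i\|=\O(E_i^{2/3})$, and a subsolver accuracy $\epsilon_i=\O(E_i)$. Under Assumption \ref{ass:gradient-Hessian} (bounded variances $\tau_1^2,\tau_2^2$ and almost-sure bounds $\gamma_1,\gamma_2$) the averaged estimators of Assumption \ref{ass:g-H} concentrate: a vector Bernstein inequality gives $\|\nabla f(x_i)-g_i\|\le E_i^{2/3}$ once $\hat n_{i1}=\tO(E_i^{-4/3})$, and a matrix Bernstein inequality (using the operator-norm variance $\tau_2^2$ and spectral bound $\gamma_2$) gives $\|\nabla^2 f(x_i)-H_i\|\le E_i^{1/3}$ once $\hat n_{i2}=\tO(E_i^{-2/3})$. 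To obtain a single $1-\delta$ statement I would assign a failure probability of order $\delta/T$ to each gradient and each Hessian estimate and take a union bound over the $T$ iterations; the resulting $\log(T/\delta)$ factors are precisely what is hidden in the $\tO$ notation.

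Finally I would sum the per-iteration work, measured in equivalent gradient evaluations. By Assumption \ref{ass:appro} the $i$-th subsolver call costs $\O(\hat n_{i2}\log(1/\epsilon_i))$ Hessian–vector products, and since a Hessian–vector product is no more expensive than a gradient sample (Assumption \ref{ass:gradient-Hessian}), the cost of iteration $i$ is dominated by the gradient sampling $\hat n_{i1}=\tO(E_i^{-4/3})$, which exceeds both $\hat n_{i2}=\tO(E_i^{-2/3})$ and the subsolver cost because $E_i<1$. In the convex case $E_i^{-4/3}=\tO((i+2)^4)$, so $\sum_{i=0}^{T}\hat n_{i1}=\tO(T^5)=\tO(\epsilon^{-5/2})$. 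In the strongly convex case $E_i^{-4/3}$ grows like $(1-\alpha)^{-4i/3}$; summing this geometric series against the $\O(T)$ terms and substituting $(1-\alpha)^T\approx\epsilon$ and $T=\tO(\sigma_2^{-1/2})$ yields the stated $\tO(\sigma_2^{-5/6}\epsilon^{-4/3})$.

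The main obstacle is the passage from a deterministic per-iteration error budget to a high-probability sample-complexity statement: the tolerances $E_i$ shrink as $i$ grows, so the sample sizes $\hat n_{i1},\hat n_{i2}$ grow accordingly, and one must guarantee that the matrix and vector concentration hold simultaneously across all $T$ iterations while the union-bound correction inflates the sample sizes by no more than logarithmic factors. A second delicate point is the cost summation in the strongly convex case, where the geometric growth of the sample sizes has to be balanced precisely against the linear contraction $(1-\alpha)^i$ of the tolerance and against the $T$-dependence of $E_i$ in order to recover the advertised joint dependence on $\sigma_2$ and $\epsilon$; getting these exponents exactly right is the step I would check most carefully.
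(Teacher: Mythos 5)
Your proposal follows the paper's own proof essentially step for step: the same translation of Theorems \ref{thm:nonstrong} and \ref{thm:strong} into a per-iteration error budget $E_i$, the same splitting of $E_i$ into the three tolerances $\|\nabla^2 f(x_i)-H_i\|=\O(E_i^{1/3})$, $\|\nabla f(x_i)-g_i\|=\O(E_i^{2/3})$ and subsolver accuracy $\O(E_i)$, the same vector/matrix Bernstein concentration with a $\delta/t$ union bound over iterations, and the same observation that the gradient sample size $\hat n_{i1}=\tO(E_i^{-4/3})$ dominates the Hessian sampling and the subsolver cost. In the convex case your accounting $\sum_i (i+2)^4=\tO(t^5)$ with $t=\O(\epsilon^{-1/2})$ reproduces the paper's $\tO(\epsilon^{-5/2})$ exactly.

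The gap is in the strongly convex case, which the paper itself also dispatches with ``a similar analysis.'' If you carry out the summation you sketch, the stated exponent does not come out. With $E_i=\Theta((1-\alpha)^i/t)$ and $\hat n_{i1}=\tO(E_i^{-4/3})$ you get $\sum_{i<t}\hat n_{i1}=\tO\left(t^{4/3}\,(1-\alpha)^{-4t/3}/\alpha\right)=\tO\left(t^{4/3}\alpha^{-1}\epsilon^{-4/3}\right)$; substituting $t=\tO(\alpha^{-1})$ and $\alpha=\Theta(\sigma_2^{1/2})$ yields $\tO(\alpha^{-7/3}\epsilon^{-4/3})=\tO(\sigma_2^{-7/6}\epsilon^{-4/3})$, not $\tO(\sigma_2^{-5/6}\epsilon^{-4/3})$. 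Dropping the $1/t$ factor in $E_i$, or optimizing the allocation of the total budget $\sum_i E_i(1-\alpha)^{-i}=\O(1)$ across iterations, still gives $\alpha^{-7/3}\epsilon^{-4/3}$, so the discrepancy is not an artifact of a suboptimal schedule. Either an additional ingredient is needed (for instance exploiting the local superlinear phase of Theorem \ref{thm:superlinear} once the iterates are close to $x^*$), or the exponent $5/6$ in the statement should read $7/6$. In any case the last step of your argument, where you assert that the substitution ``yields the stated'' bound, is precisely the step that fails, and you were right to flag it as the one to check.
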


\begin{corollary}[The finite-sum setting]\label{thm:finite}
Suppose that Assumptions \ref{ass:gradient-Hessian} and \ref{ass:g-H} hold. Then if $F(x)$ is convex, then with the probability $1-\delta$, IPCNM can find an $\epsilon$-accurate solution in at most  $$\min\{\tilde{\mathcal{O}}(\epsilon^{-5/2}),\tilde{\mathcal{O}}({n\epsilon^{-1/2}})\}
  \text{  {\rm equivalent stochastic gradient iterations.}}$$ 

If $F(x)$ is $\sigma_2$-strongly convex, IPCNM can find an  $\epsilon$-accurate solution in at most  $$\min\left\{\tilde{\mathcal{O}}(\sigma_2^{-5/6}\epsilon^{-4/3}), \tilde{\mathcal{O}}(\sigma_2^{-1/2}n)\right\}  \text{  {\rm equivalent stochastic gradient iterations.}}$$	
\end{corollary}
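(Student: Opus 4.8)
The plan is to bound the total work of IPCNM on the finite-sum problem from two directions and take the smaller, which is exactly the $\min$ in the statement. The first entry of each $\min$ is inherited for free: since the finite-sum instance satisfies Assumptions \ref{ass:gradient-Hessian} and \ref{ass:g-H} by hypothesis, Corollary \ref{thm:online} already certifies that $\tO(\epsilon^{-5/2})$ (convex) and $\tO(\sigma_2^{-5/6}\epsilon^{-4/3})$ (strongly convex) equivalent stochastic gradient evaluations suffice. The only genuinely new ingredient is the second entry, which exploits the fact recorded just before the corollary: in the finite-sum setting the per-iteration sample counts for the gradient and the Hessian can be capped at $n$, because averaging over all $n$ components returns the exact $\nabla f(x_t)$ and $\nabla^2 f(x_t)$.

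First I would show that this cap can only help. In iteration $t$ the online schedule prescribes sample sizes $\hat n_{t1},\hat n_{t2}$ chosen so the first two terms of $E_t$ in \eqref{eq:E-t} meet the error budget required by Theorems \ref{thm:nonstrong} and \ref{thm:strong} (namely $E_i=\O((i+2)^{-3})$ in the convex case and $E_i=\O((1-\alpha)^i/t)$ in the strongly convex case). Drawing $\min\{\hat n_{tj},n\}$ samples instead can only shrink $\|\nabla^2 f(x_t)-H_t\|$ and $\|\nabla f(x_t)-g_t\|$; once the cap binds these two terms vanish entirely and $E_t$ collapses to the subsolver error alone. Either way the budget is still met, so the same convergence guarantees hold and the iteration count $T$ to reach $\epsilon$-accuracy is no larger than the exact-PCNM count: $T=\O(\epsilon^{-1/2})$ in the convex case via Theorem \ref{thm:nonstrong}, and $T=\tO(\sigma_2^{-1/2})$ in the strongly convex case via Theorem \ref{thm:strong} (taking $\alpha=\sqrt{\sigma_2/(3L_3D)}$ for small $\sigma_2$, so $(1-\alpha)^T\le\epsilon$ forces $T=\O(\alpha^{-1}\log\epsilon^{-1})$).

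Next I would account the cost. By Assumption \ref{ass:appro} the subsolver of iteration $t$ costs $\O(\text{cost}(H_tv)\log\epsilon_t^{-1})$, and since $H_t$ averages $\min\{\hat n_{t2},n\}$ component Hessians, a single product $H_tv$ costs $\O(\min\{\hat n_{t2},n\})$ component Hessian-vector products, which by Assumption \ref{ass:gradient-Hessian} are no more expensive than stochastic gradient evaluations. Absorbing the logarithmic factor into $\tO(\cdot)$, iteration $t$ costs $\tO(\min\{\hat n_{t2},n\})$ equivalent stochastic gradient evaluations, so the total is $\sum_{t=0}^{T-1}\tO(\min\{\hat n_{t2},n\})$. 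Bounding each summand by $n$ gives $\tO(nT)$, i.e.\ $\tO(n\epsilon^{-1/2})$ (convex) and $\tO(\sigma_2^{-1/2}n)$ (strongly convex); bounding it instead by $\hat n_{t2}$ recovers the online total of Corollary \ref{thm:online}. Both bounds hold for the same run, so the complexity is at most their minimum, which is precisely the claimed result.

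The step I expect to be the main obstacle is the monotonicity argument of the second paragraph: arguing cleanly that replacing the prescribed stochastic estimates by their capped versions does not invalidate the high-probability ($1-\delta$) statements behind Theorems \ref{thm:nonstrong}--\ref{thm:strong}. One must verify that the concentration bounds underlying the online sample-size choices still control $\|\nabla^2 f(x_t)-H_t\|$ and $\|\nabla f(x_t)-g_t\|$ when $\min\{\hat n_{tj},n\}$ samples are used, and in particular that the event ``the cap binds'' only shrinks these error terms deterministically rather than opening new failure modes. Once this is pinned down, the two-sided cost accounting and the resulting $\min$ follow by routine summation.
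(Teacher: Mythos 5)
Your proposal is correct and matches the paper's (much terser) argument: inherit the first entry of each $\min$ from the online analysis of Corollary \ref{thm:online}, cap the per-iteration sample sizes at $n$ so that once the cap binds the gradient and Hessian become exact and the corresponding terms of $E_t$ vanish deterministically, and then bound the total cost by $\tilde{\mathcal{O}}(nT)$ with $T=\mathcal{O}(\epsilon^{-1/2})$ (convex) or $T=\tilde{\mathcal{O}}(\sigma_2^{-1/2})$ (strongly convex) iterations. Your careful treatment of the cost accounting and of why capping cannot break the high-probability guarantees is more explicit than the paper's one-paragraph sketch, but it is the same route.
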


\section{The accelerated inexact proximal cubic regularized Newton method}\label{sec:acc-cubic}

\begin{algorithm} [H]
	\caption{Accelerated inexact proximal cubic regularized Newton method (AIPCNM)}
	\label{alg:AICNM}
	\begin{algorithmic}[1]

\STATE Input:
$x_0 \in \bbr^n$,  $\eta=4L_3,  C_1>0, C_2>0$, a sequence $\{A_t\}_{t\ge 0}$
\STATE  Set $v_0=x_0$
\STATE Set $ \psi_0(x) = \frac{C_1}{2}\|x-x_0\|^2+ \frac{C_2}{3}\|x-x_0\|^3 $
\FOR{$t=0,1,2,\ldots$}
\STATE Set $a_t = A_{t+1}-A_t$
\STATE Obtain inexact gradient $g_t$ and Hessian $H_t$ , where $H_t$ satisfies Assumption \ref{ass:H}
\STATE Set $y_t = (1 - \alpha_t) x_t + \alpha_t v_t$, where $\alpha_t=\frac{a_t}{A_t+a_t}$
\STATE Find an approximate solution $x_{t+1}$ of  $\min_{x\in\bbR^d}\tilde{f}_{\eta}(x;y_t)$, where $\tilde{f}_{\eta}(x;y)$ is defined in \eqref{eq:subprob}
\STATE Obtain $g_{t+1}^{\prime}$ that satisfies Assumption~\ref{ass:g-prime}
\STATE  Find  $v_{t+1}=\argmin_{x \in \bbr^d}\psi_{t+1}(x)$, where 
\begin{align}
\psi_{t+1}(x)=&\psi_t(x)+a_t\big(f(x_{t+1})+\langle g_{t+1}^{\prime}, x-x_{t+1} \rangle+h(x)\big).	 \label{eq:psi}
\end{align}

\ENDFOR
	\end{algorithmic}
\end{algorithm}

In Algorithm \ref{alg:AICNM}, we propose the AIPCNM method. To ensure convergence, we make Assumptions \ref{ass:H} and \ref{ass:g-prime}.

\begin{assumption}\label{ass:H}
Let $\{y_t\}_{t \ge 0}$ be generated by Algorithm~\ref{alg:AICNM}. Then we have
\begin{eqnarray}
\frac{\mu_t}{2}\preceq H_t - \nabla^2 f(y_t)\preceq \mu_t, 
\end{eqnarray}
where $\{\mu_t\}_{t\ge 0}$ is a positive sequence.
\end{assumption}

\begin{assumption}\label{ass:g-prime}
$g_{t+1}^{\prime}$ is an unbiased estimation of $\nabla f(x_{t+1})$, $i.e.,$ $\E[g_{t+1}^{\prime}] = \nabla f(x_{t+1})$. 

\end{assumption}

Then by extending \citep[Lemma 2.1]{nunes2018accelerated}, we obtain Lemma  \ref{eq:acc-key}, which is the key lemma to extend the conclusion of the exact APCNM to the inexact case.  
\begin{lemma}\label{eq:acc-key}
Let $\{x_t\},\{y_t\}$ be generated by Alg. \ref{alg:AICNM}. Denote $$q_{t+1} \overset{\rm def}{=} g_t-\nabla f(y_t)+\nabla F(x_{t+1}) - \nabla \tilde{f}_{\eta}(x_{t+1}; y_t),$$ then we have
\begin{eqnarray*}
q_{t+1}^T(y_t-x_{t+1})
&\ge&\min\left\{ \frac{\|q_{t+1}\|^2}{3\mu_t}  , \sqrt{\frac{\|q_{t+1}\|^3}{4L_3+2\eta}} \right\}+\frac{\mu_t}{4}\|y_t-x_{t+1}\|^2.
\end{eqnarray*}
\end{lemma}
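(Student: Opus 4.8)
The plan is to evaluate $q_{t+1}$ in closed form, reduce it to a Hessian-Lipschitz remainder plus a Hessian-error term, and then split into two regimes that reproduce the two branches of the minimum.

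First I would set $s := x_{t+1} - y_t$ and compute $q_{t+1}$ explicitly. Using the definition of $\tilde{f}_{\eta}(\cdot\,;y_t)$ in \eqref{eq:subprob}, whose (sub)gradient at $x_{t+1}$ is $g_t + H_t s + \frac{\eta}{2}\|s\|s + h'(x_{t+1})$, together with $\nabla F(x_{t+1}) = \nabla f(x_{t+1}) + h'(x_{t+1})$ for the \emph{same} subgradient of $h$, the inexact gradient $g_t$ and the subgradient $h'(x_{t+1})$ cancel, leaving $q_{t+1} = \nabla f(x_{t+1}) - \nabla f(y_t) - H_t s - \frac{\eta}{2}\|s\|s$. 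I would then insert $\nabla^2 f(y_t)s$ and regroup as $q_{t+1} = w - Ms - \frac{\eta}{2}\|s\|s$, where $w := \nabla f(x_{t+1}) - \nabla f(y_t) - \nabla^2 f(y_t)s$ and $M := H_t - \nabla^2 f(y_t)$. The $L_3$-Lipschitz Hessian property gives $\|w\| \le \frac{L_3}{2}\|s\|^2$, and Assumption \ref{ass:H} gives $\frac{\mu_t}{2}I \preceq M \preceq \mu_t I$.

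Next I would record three estimates with $m := s^\top M s$. (i) Since $-w^\top s \ge -\frac{L_3}{2}\|s\|^3$, the lower bound $-q_{t+1}^\top s = m - w^\top s + \frac{\eta}{2}\|s\|^3 \ge m + \frac{\eta-L_3}{2}\|s\|^3$ holds. (ii) From $0 \preceq M \preceq \mu_t I$ one gets the key inequality $\|Ms\|^2 \le \mu_t\, s^\top M s = \mu_t m$ (diagonalize $M$ and compare $\sum \lambda_i^2 c_i$ with $\mu_t\sum \lambda_i c_i$), whence $\|q_{t+1}\| \le \sqrt{\mu_t m} + \frac{L_3+\eta}{2}\|s\|^2$. (iii) Since $m \ge \frac{\mu_t}{2}\|s\|^2$, one has $m - \frac{\mu_t}{4}\|s\|^2 \ge \frac{m}{2}$, which is how the trailing $\frac{\mu_t}{4}\|s\|^2$ term of the claim is absorbed. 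The crucial point here — and the reason a naive argument fails — is that I must keep $\|Ms\|$ \emph{tied} to $m$ through (ii) rather than bounding $\|Ms\| \le \mu_t\|s\|$ independently of $-q_{t+1}^\top s$: a large Hessian-error contribution to $\|q_{t+1}\|$ is automatically accompanied by a correspondingly large $m$ in the lower bound, and the two worst cases are not simultaneously attainable.

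Finally I would split according to which part of the bound on $\|q_{t+1}\|$ dominates. When the error part $\sqrt{\mu_t m}$ dominates the cubic part $\frac{L_3+\eta}{2}\|s\|^2$, I would expand $\|q_{t+1}\|^2 \le \big(\sqrt{\mu_t m} + \frac{L_3+\eta}{2}\|s\|^2\big)^2$ and combine (i), (iii), and the case hypothesis (with $\eta = 4L_3$, so $\eta - L_3 = 3L_3$ and $4L_3 + 2\eta = 12L_3$) to obtain $-q_{t+1}^\top s - \frac{\mu_t}{4}\|s\|^2 \ge \frac{\|q_{t+1}\|^2}{3\mu_t}$, the first branch. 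In the complementary regime the cubic part dominates, $\|q_{t+1}\|$ is controlled by $\frac{L_3+\eta}{2}\|s\|^2$, and the term $\frac{\eta-L_3}{2}\|s\|^3$ in (i) carries the bound; here I would verify $-q_{t+1}^\top s - \frac{\mu_t}{4}\|s\|^2 \ge \sqrt{\|q_{t+1}\|^3/(4L_3+2\eta)}$, the second branch, whose governing numerical inequality $\frac{\eta-L_3}{2}\sqrt{4L_3+2\eta} \ge \big(\frac{L_3+\eta}{2}\big)^{3/2}$ reads $3\sqrt{3} \ge (5/2)^{3/2}$ and is comfortably true. The main obstacle I expect is bookkeeping the constants so that the case threshold (I would take it at $\sqrt{\mu_t m} = \frac{L_3+\eta}{2}\|s\|^2$, tuned if necessary) leaves slack in \emph{both} regimes; this is exactly where the interplay between $\|Ms\|$ and $m$ from (ii) must be exploited rather than the crude $a+b \le 2\max\{a,b\}$ splitting.
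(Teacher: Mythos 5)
Your setup is right and matches the paper's: with $s:=x_{t+1}-y_t$, the subgradient of $h$ and the inexact gradient $g_t$ cancel, leaving $q_{t+1}=w-Ms-\frac{\eta}{2}\|s\|s$ with $w:=\nabla f(x_{t+1})-\nabla f(y_t)-\nabla^2 f(y_t)s$, $\|w\|\le\frac{L_3}{2}\|s\|^2$, and $M:=H_t-\nabla^2 f(y_t)$ satisfying $\frac{\mu_t}{2}I\preceq M\preceq\mu_t I$. But the core of your argument --- proving a lower bound on $-q_{t+1}^\top s$ and an upper bound on $\|q_{t+1}\|$ \emph{separately} and then matching them by a case split --- cannot close, because the two bounds are not simultaneously tight and the decoupled system admits a fictitious worst case that violates the target inequality. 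Concretely, take $\mu_t=1$, $\|s\|=1$, $L_3=2/5$ (so $\eta=8/5$, $4L_3+2\eta=24/5$) and $m=s^\top Ms=1/2$. Your estimate (i) together with (iii) gives only $-q_{t+1}^\top s-\frac{\mu_t}{4}\|s\|^2\ge \frac12+\frac{\eta-L_3}{2}-\frac14=0.85$, while your estimate (ii) gives only $\|q_{t+1}\|\le\sqrt{\mu_t m}+\frac{L_3+\eta}{2}=\frac{1}{\sqrt2}+1\approx 1.707$; at that value of $\|q_{t+1}\|$ the first branch is $\approx 0.971$ and the second is $\approx 1.018$, so the required minimum is $\approx 0.971>0.85$. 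Since the minimum is increasing in $\|q_{t+1}\|$ and you control $\|q_{t+1}\|$ only from above, no choice of case threshold can repair this: the reduced inequality you would be proving is simply false. (The lemma itself survives because whenever $\|q_{t+1}\|$ actually approaches your upper bound, $-q_{t+1}^\top s$ is correspondingly larger than your lower bound --- e.g.\ $w$ must point along $-s$ and $Ms$ must align with $s$, which forces $m=\mu_t\|s\|^2$ --- but that correlation is exactly what your triangle-inequality bound discards. Your inequality $\|Ms\|^2\le\mu_t m$ retains part of the coupling, but not enough.)

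The missing idea, which is how the paper proves the lemma, is to derive a single inequality in which $q_{t+1}^\top s$ and $\|q_{t+1}\|^2$ appear together, by completing the square: since $q_{t+1}+\bigl(\mu_t+\frac{\eta}{2}\|s\|\bigr)s=w-(M-\mu_t I)s$ and $\|w-(M-\mu_t I)s\|\le L_3\|s\|^2+\frac{\mu_t}{2}\|s\|$ (using $0\preceq \mu_t I-M\preceq\frac{\mu_t}{2}I$), expanding $\bigl\|q_{t+1}+\bigl(\mu_t+\frac{\eta}{2}r\bigr)s\bigr\|^2\le\bigl(L_3r^2+\frac{\mu_t}{2}r\bigr)^2$ with $r=\|s\|$ produces the cross term $(2\mu_t+\eta r)\,q_{t+1}^\top s$ alongside $\|q_{t+1}\|^2+(\mu_t+\frac{\eta}{2}r)^2r^2$, and with $\eta\ge 4L_3$ this rearranges to the \emph{product} bound $q_{t+1}^\top(y_t-x_{t+1})\ge\frac12\|q_{t+1}\|\,r+\frac{\mu_t}{4}r^2$. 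The paper then converts $r$ into $\|q_{t+1}\|$ via the quadratic inequality $\|q_{t+1}\|\le(L_3+\frac{\eta}{2})r^2+\mu_t r$, i.e.\ $r\ge 2\|q_{t+1}\|\big/\bigl(\mu_t+\sqrt{\mu_t^2+(4L_3+2\eta)\|q_{t+1}\|}\bigr)$, and the two branches of the minimum come from whether $3\mu_t^2\ge(4L_3+2\eta)\|q_{t+1}\|$ or not. You would need to replace your steps (i)--(iii) and the dominance case split with this completed-square argument; the rest of your outline (the closed form of $q_{t+1}$ and the use of Assumption~\ref{ass:H}) can be kept as is.
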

In Lemma \ref{eq:acc-key}, we define $q_{t+1}$ as an proxy to the $\nabla F(x_{t+1}$ of the exact case. Then based on  Lemma \ref{eq:acc-key}, we prove Theorem \ref{thm:prime-result}.

\begin{theorem}\label{thm:prime-result}
Assume for $t\ge0$, $\|v_{t+1}-v_t\|\le R$. Meanwhile
assume the constants $C_1>0, C_2>0$ in Algorithm \ref{alg:AICNM} satisfy for $t\ge0$, 
\begin{eqnarray*}
C_1&\ge&\max_{t\ge0}\left\{\frac{9\mu_ta_t^2}{2A_{t+1}}-\frac{2}{3}A_t\sigma_2\right\}\\
C_2 &\ge&\max_{t\ge0}\left\{ \frac{32a_t^3L_3}{3A_{t+1}^2}-\frac{A_t\sigma_2}{R}\right\}.
\end{eqnarray*} 
Then if sequences $\{x_t\}$, $\{v_t\}$ are generated by Algorithm \ref{alg:AICNM}, then for all $t>0$, we have
\begin{eqnarray}
\E[F(x_t) - F(x^*)]\le \frac{1}{A_t}\left( \frac{C_1}{2}\|x_0-x^*\|^2+ \frac{C_2}{3}\|x_0-x^*\|^3\right) + \frac{1}{A_t} \sum_{i=1}^t G_i,
\end{eqnarray}
where 
\begin{eqnarray}
G_i &=& \left(\frac{A_{i+1}}{\mu_i}+\frac{9a_i^2}{2(3C_1+2A_i\sigma_2)}\right) \|g_i-\nabla f(y_i) - \nabla \tilde{f}_{\eta}(x_{i+1}; y_i)\|^2\nonumber\\
 % && -\frac{C_3}{2}\|v_{t+1}-v_t\|^2
 % \\
 &&+  \frac{9a_i^2}{2(3C_1+2A_i\sigma_2)}\| g_{i+1}^{\prime}- \nabla f(x_{i+1})\|^2,
\end{eqnarray}
and the expectation is taken on all the history of the randomness of $g_{i+1}^{\prime}$ from $i=0$ to $t-1$.
\end{theorem}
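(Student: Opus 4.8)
The plan is to run the estimating-sequence (potential-function) argument that underlies accelerated schemes, adapted to the cubic regularizer and to the three perturbations. The model $\psi_t$ is built so that $\psi_t-\psi_0$ is a weighted sum of linearizations of $F$ lying below $F$; denote its minimizer and minimal value by $v_t$ and $\psi_t^*=\min_x\psi_t(x)=\psi_t(v_t)$. The statement then reduces to two facts, to be proved and chained. \emph{(Fact 1, the easy half):} an upper estimate $\E[\psi_t(x^*)]\le\psi_0(x^*)+A_tF(x^*)$ certifying that the accumulated model does not overshoot at the optimum. \emph{(Fact 2, the core):} a tracking inequality $A_t\,\E[F(x_t)]\le\E[\psi_t^*]+\sum_{i=1}^tG_i$. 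Since $\psi_t^*\le\psi_t(x^*)$, combining the two and dividing by $A_t$ gives exactly the asserted bound, with $\psi_0(x^*)=\frac{C_1}{2}\|x_0-x^*\|^2+\frac{C_2}{3}\|x_0-x^*\|^3$.

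For Fact 1 I would unroll the recursion \eqref{eq:psi} to $\psi_t(x)=\psi_0(x)+\sum_{i=0}^{t-1}a_i\big(f(x_{i+1})+\langle g_{i+1}', x-x_{i+1}\rangle+h(x)\big)$, condition on the history up to $x_{i+1}$, and use Assumption \ref{ass:g-prime} to replace $g_{i+1}'$ by $\nabla f(x_{i+1})$ in expectation. Convexity of $f$ and $h$ then bounds each summand by $F(x)$, and since $\sum_{i=0}^{t-1}a_i=A_t$ (with $A_0=0$), evaluating at $x=x^*$ and applying the tower property yields Fact 1. When $F$ is $\sigma_2$-strongly convex the same step produces the sharper estimate with an extra $-\tfrac{\sigma_2}{2}\sum_i a_i\|x^*-x_{i+1}\|^2$; this surplus is what ultimately permits the $\sigma_2$-dependent relaxations in the hypotheses on $C_1,C_2$.

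Fact 2 I would prove by induction on $t$, passing from $A_tF(x_t)\le\psi_t^*$ to $A_{t+1}F(x_{t+1})\le\psi_{t+1}^*$ up to the increment $G_t$. The step has three ingredients. First, I would use that $\psi_t$ is strongly/uniformly convex ($C_1$ from its quadratic part, $C_2$ from its cubic part) and fold in the $\sigma_2$-surplus from Fact 1, so that the \emph{effective} quadratic modulus driving the step is $C_1+\tfrac{2}{3}A_t\sigma_2$ and the cubic part carries an extra $\sigma_2/R$, the cubic term being made comparable to a quadratic through $\|v_{t+1}-v_t\|\le R$; this lower-bounds $\psi_{t+1}^*=\psi_{t+1}(v_{t+1})$ by $\psi_t^*$, the model increment $a_t\big(f(x_{t+1})+\langle g_{t+1}', v_{t+1}-x_{t+1}\rangle+h(v_{t+1})\big)$, and a positive ``distance'' term in $\|v_{t+1}-v_t\|$. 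Second, I would invoke the cubic descent at $y_t$: the $L_3$-Lipschitz-Hessian property places $F(x_{t+1})$ below the cubic model, and Lemma \ref{eq:acc-key} converts the residual $q_{t+1}$ into the gain $\min\{\|q_{t+1}\|^2/(3\mu_t),\ \sqrt{\|q_{t+1}\|^3/(4L_3+2\eta)}\}$. Third, I would exploit the coupling $y_t=(1-\alpha_t)x_t+\alpha_t v_t$ with $\alpha_t=a_t/A_{t+1}$ to split $A_{t+1}F(x_{t+1})-A_tF(x_t)$ into a piece matched against the model increment and a residual. The conditions $C_1\ge\max_t\{9\mu_ta_t^2/(2A_{t+1})-\tfrac{2}{3}A_t\sigma_2\}$ and $C_2\ge\max_t\{32a_t^3L_3/(3A_{t+1}^2)-A_t\sigma_2/R\}$ are calibrated precisely so the positive quadratic residual (from $\mu_t$) and cubic residual (from $L_3$) are absorbed by the distance term, leaving only the genuine inexactness of $g_t$, the subsolver, and $g_{t+1}'$, which is collected into $G_t$.

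The hard part will be this inductive step, for two reasons. The cubic regularizer breaks the familiar quadratic estimating-sequence algebra: the Young-type inequalities needed to absorb the residual gains must be run separately in the $\mu_t$-dominated (quadratic) and $L_3$-dominated (cubic) regimes selected by the $\min$ in Lemma \ref{eq:acc-key}, which is exactly why two conditions — one on $C_1$, one on $C_2$ — are required, and why $R$ must bound $\|v_{t+1}-v_t\|$ to render the cubic term commensurable with a quadratic. Simultaneously, the three perturbations must be routed correctly: the gradient and subsolver error through $q_{t+1}$, the Hessian inexactness through $\mu_t$ in Assumption \ref{ass:H}, and the variance of $g_{t+1}'$ through conditional expectations, so that the cross term $\langle g_{i+1}'-\nabla f(x_{i+1}),\,v_{i+1}-x_{i+1}\rangle$ contributes no bias and its second moment lands in $G_i$ rather than corrupting the recursion.
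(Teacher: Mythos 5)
Your proposal is correct and follows essentially the same route as the paper: the paper proves exactly your Fact 1 (Lemma \ref{lem:111}, via unrolling \eqref{eq:psi}, unbiasedness of $g_{t+1}^{\prime}$, and convexity) and your Fact 2 (Lemma \ref{lem:recur}, by induction using the $C_1{+}A_t\sigma_2$-strong and $C_2/2$-uniform convexity of $\psi_t$, Lemma \ref{eq:acc-key}, the coupling $y_t=(1-\alpha_t)x_t+\alpha_t v_t$, and Young-type inequalities whose residuals are absorbed by the stated conditions on $C_1,C_2$ with $\|v_{t+1}-v_t\|\le R$), then chains them through $\psi_t(v_t)\le\psi_t(x^*)$ and divides by $A_t$. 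The details you flag as the hard part — splitting the $\min$ from Lemma \ref{eq:acc-key} into the $\mu_t$-dominated and $L_3$-dominated regimes and routing the three error sources into $G_i$ — are exactly how the paper's proof of Lemma \ref{lem:recur} proceeds.
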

In Theorem \ref{thm:prime-result}, $\mu_i$ bound the error of $H_i$, $\|g_i-\nabla f(y_i) - \nabla \tilde{f}_{\eta}(x_{i+1}; y_i)\|$ bounds the error of the inexact gradient $g_i$ and the inexact solution\footnote{$\|\nabla \tilde{f}_{\eta}(x_{i+1}; y_i)\|$ can be used as a measure of the subsolver, which can be bounded by $\tilde{f}_{\eta}(x_{i+1}; y_i) - \tilde{f}_{\eta}(\tilde{x}; y_i)$ by the Lipschitz gradient property} $x_{i+1}$  and $\| g_{i+1}^{\prime}- \nabla f(x_{i+1})\|$ bounds the error of $g_{i+1}^{\prime}$.

\begin{theorem}[The convex case]\label{thm:acc-convex}
Assume that $\|x_0-x^*\|\le D$.
If we set 
\begin{equation}
\begin{cases}
&\forall 0\le i, A_{i}=\frac{i(i+1)(i+2)}{6},\mu_i = \frac{L_3D}{i+2} \\
&C_1 = 7L_3D, C_2 = 48L_3 \\
&\|g_i-\nabla f(y_i) - \nabla \tilde{f}_{\eta}(x_{i+1}; y_i)\|\le  \frac{L_3D^2}{\sqrt{2t}(i+2)^2}\\
&\| g_{i+1}^{\prime}- \nabla f(x_{i+1})\| \le \frac{2L_3D^2}{\sqrt{t}(i+2)^2},\\
\end{cases}\label{eq:cond11}
\end{equation}
 then if $F(x)$ is convex, we have
\begin{eqnarray*}
\E[F(x_t) - F(x^*)]&\le& \frac{129L_3D^3}{t(t+1)(t+2)}, \label{eq:cond12}
\end{eqnarray*}
where the expectation is taken on all the history of the randomness of $g_{i+1}^{\prime}$ from $i=0$ to $t-1$.
\end{theorem}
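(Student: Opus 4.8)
The plan is to derive everything from Theorem~\ref{thm:prime-result} specialized to $\sigma_2=0$, so that proving Theorem~\ref{thm:acc-convex} reduces to two bookkeeping tasks: (i) confirming that the prescribed $C_1=7L_3D$ and $C_2=48L_3$ satisfy the two lower-bound hypotheses of Theorem~\ref{thm:prime-result}, and (ii) bounding the residual sum $\frac{1}{A_t}\sum_{i=1}^t G_i$ under the accuracy schedule in \eqref{eq:cond11}. Granting those, I substitute $A_i=i(i+1)(i+2)/6$, $\mu_i=L_3D/(i+2)$ and $\|x_0-x^*\|\le D$ into the master inequality
\[
\E[F(x_t)-F(x^*)]\le \frac{1}{A_t}\Big(\tfrac{C_1}{2}\|x_0-x^*\|^2+\tfrac{C_2}{3}\|x_0-x^*\|^3\Big)+\frac{1}{A_t}\sum_{i=1}^t G_i,
\]
and use $\frac1{A_t}=\frac{6}{t(t+1)(t+2)}$ to read off the claimed rate.

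First I would compute the closed form $a_t=A_{t+1}-A_t=(t+1)(t+2)/2$. With $\sigma_2=0$ the two hypotheses of Theorem~\ref{thm:prime-result} read $C_1\ge\sup_t\frac{9\mu_t a_t^2}{2A_{t+1}}$ and $C_2\ge\sup_t\frac{32a_t^3L_3}{3A_{t+1}^2}$. Substituting the closed forms, both right-hand sides collapse to rational functions of $t$: the first equals $\frac{27L_3D(t+1)}{4(t+3)}$, which is increasing in $t$ with supremum $\frac{27}{4}L_3D<7L_3D=C_1$; the second equals $\frac{48L_3(t+1)(t+2)}{(t+3)^2}$, which since $(t+1)(t+2)<(t+3)^2$ stays below $48L_3=C_2$. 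Hence the hypotheses hold and the master bound applies.

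The heart of the argument is estimating $G_i$. With $\sigma_2=0$, the coefficient of the gradient/subsolver error is $\frac{A_{i+1}}{\mu_i}+\frac{3a_i^2}{2C_1}$ and that of the $g_{i+1}^\prime$ error is $\frac{3a_i^2}{2C_1}$. Substituting gives $\frac{A_{i+1}}{\mu_i}=\frac{(i+1)(i+2)^2(i+3)}{6L_3D}$, which grows like $(i+2)^4$, while the tolerances in \eqref{eq:cond11} decay like $(i+2)^{-2}$; after squaring, the two $(i+2)^4$ factors cancel \emph{exactly}, and the $1/\sqrt{2t}$ and $1/\sqrt t$ prefactors contribute a $1/t$. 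Discarding leftover polynomial factors via $(i+1)(i+3)\le(i+2)^2$ and $(i+1)^2\le(i+2)^2$, each term obeys $G_i\le\frac{109}{336}\cdot\frac{L_3D^3}{t}$, so $\sum_{i=1}^tG_i\le\frac{109}{336}L_3D^3$, a constant independent of $t$. This cancellation—engineered precisely by matching the decay of the subproblem accuracies to the growth of $A_{i+1}/\mu_i$ and the $1/\sqrt t$ scaling so that $t$ terms of size $\O(1/t)$ sum to $\O(1)$—is the genuine design insight and the only delicate point; the remainder is constant-tracking.

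Finally I would assemble the constant. The initial term gives $\frac{C_1}{2}D^2+\frac{C_2}{3}D^3=\frac{39}{2}L_3D^3$; adding the residual $\frac{109}{336}L_3D^3$ and multiplying by $\frac{6}{t(t+1)(t+2)}$ yields $\E[F(x_t)-F(x^*)]\le\frac{6}{t(t+1)(t+2)}\cdot\frac{6661}{336}L_3D^3<\frac{129L_3D^3}{t(t+1)(t+2)}$, matching the stated rate with modest numerical slack. The main obstacle, then, is not conceptual but purely the consistency check: carrying the explicit constants through (i)--(ii) and confirming that the single schedule $C_1,C_2,\mu_i$ and the tolerances in \eqref{eq:cond11} simultaneously clear the hypothesis test and produce a final constant below $129$.
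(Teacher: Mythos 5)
Your proposal is correct and follows exactly the route the paper intends: the paper's own proof is the one-line remark that the result follows ``directly'' from Theorem~\ref{thm:prime-result} under the setting \eqref{eq:cond11}, and you have simply carried out the verification it leaves implicit (the hypothesis checks $\sup_t \frac{27L_3D(t+1)}{4(t+3)}=\frac{27}{4}L_3D<7L_3D$ and $\frac{48L_3(t+1)(t+2)}{(t+3)^2}<48L_3$, the bound $G_i\le\frac{109}{336}\frac{L_3D^3}{t}$, and the final constant $\frac{6\cdot 6661}{336}\approx 119<129$, all of which I confirm). No gaps; this is the same argument, just with the arithmetic made explicit.
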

\begin{proof}
By using the setting in  \eqref{eq:cond11}  and Theorem \ref{thm:prime-result}, we can obtain \eqref{eq:cond12} directly. 
\end{proof}

\begin{theorem}[The strongly convex case]\label{thm:acc-strong-convex}
If $F(x)$ is $\sigma_2$-strongly convex, by setting 
\begin{eqnarray}
\rho=\min\left\{1,  \frac{3^{1/3}}{2}\left(\frac{\sigma_2}{L_3R}\right)^{1/3}\right\}, 
\end{eqnarray}
and set
\begin{equation}
\begin{cases}
&A_0 =0 \\
&\forall  i\ge 1, A_{i} = (1+\rho)^i, \mu_i =\mu_0= \frac{32(L_3R)^{2/3}\sigma_2^{1/3}}{27\cdot 3^{2/3}}\\
&C_1=\frac{9\mu_0(1+\rho)}{2}, C_2=\frac{32(1+\rho)L_3}{3} \\
&\|g_i-\nabla f(y_i) - \nabla \tilde{f}_{\eta}(x_{i+1}; y_i)\|\le  \left(\frac{1}{\mu_0}+\frac{9\rho^2}{4\sigma_2}\right)^{-1/2}(1+\rho)^{-i/2+1}t^{-1/2}\\
&\| g_{i+1}^{\prime}- \nabla f(x_{i+1})\| \le \frac{2\sigma_2^{1/2}}{3\rho} (1+\rho)^{-i/2+1}t^{-1/2}\\
\end{cases}\label{eq:cond21}
\end{equation}
then we have for $t\ge 1$, 
\begin{eqnarray}
\E[F(x_t) - F(x^*)]&\le& (1+\rho)^{-(t+1)}\left( \frac{9\mu_0}{4}\|x_0-x^*\|^2 + \frac{32L_3}{9}\|x_0-x^*\|^3+2\right),\label{eq:cond22}
\end{eqnarray}
where the expectation is taken on all the history of the randomness of $g_{i+1}^{\prime}$ from $i=0$ to $t-1$.
\end{theorem}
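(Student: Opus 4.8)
The plan is to derive this as a direct corollary of Theorem~\ref{thm:prime-result}, in the same spirit as the convex case (Theorem~\ref{thm:acc-convex}): substitute the parameter choices \eqref{eq:cond21}, check that the admissibility hypotheses on $C_1,C_2$ hold, and then simplify. First I would record the increments of the geometric estimate sequence. From $A_0=0$ and $A_i=(1+\rho)^i$ for $i\ge1$ one gets $a_0=1+\rho$ and $a_t=\rho(1+\rho)^t$ for $t\ge1$, hence $a_t^2/A_{t+1}=\rho^2(1+\rho)^{t-1}$ and $a_t^3/A_{t+1}^2=\rho^3(1+\rho)^{t-2}$. These are exactly the quantities appearing in the two conditions that $C_1$ and $C_2$ must satisfy.

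Next I would verify those two conditions for the stated $C_1=\frac{9\mu_0(1+\rho)}{2}$ and $C_2=\frac{32(1+\rho)L_3}{3}$. The leverage comes from the explicit value of $\mu_0$ together with $\rho=\frac{3^{1/3}}{2}(\sigma_2/L_3R)^{1/3}$ (in the regime where this is $\le1$): a short computation yields the two identities $\frac{9\mu_0\rho^2}{2}=\frac{4}{3}\sigma_2$ and $\frac{32\rho^3 L_3}{3}=\frac{4\sigma_2}{R}$, which are precisely what is needed to control $\frac{9\mu_t a_t^2}{2A_{t+1}}-\frac{2}{3}A_t\sigma_2$ and $\frac{32a_t^3L_3}{3A_{t+1}^2}-\frac{A_t\sigma_2}{R}$ once the common geometric factor is cancelled; the clamped regime $\rho=1$ must be checked separately. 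At $t=0$ both expressions reduce exactly to $C_1$ and $C_2$, which is what fixes the constants.

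Then I would apply Theorem~\ref{thm:prime-result}. Using $A_t=(1+\rho)^t$, the leading term $\frac{1}{A_t}\big(\frac{C_1}{2}\|x_0-x^*\|^2+\frac{C_2}{3}\|x_0-x^*\|^3\big)$ collapses, via the chosen $C_1,C_2$, to the $\frac{9\mu_0}{4}\|x_0-x^*\|^2+\frac{32L_3}{9}\|x_0-x^*\|^3$ part of \eqref{eq:cond22} up to the appropriate power of $(1+\rho)$. It then remains to bound the accumulated error $\frac{1}{A_t}\sum_{i=1}^t G_i$. Substituting the bounds from \eqref{eq:cond21}, the squared errors each carry the factor $(1+\rho)^{-i+2}t^{-1}$, which is engineered to cancel the geometric growth of the coefficients $\frac{A_{i+1}}{\mu_0}$ and $\frac{9a_i^2}{2(3C_1+2A_i\sigma_2)}$, both of order $(1+\rho)^i$. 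After this cancellation each $G_i$ is $O(1/t)$ uniformly in $i$, so summing the $t$ terms gives a constant, which supplies the additive $2$ inside the parentheses of \eqref{eq:cond22}.

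The main obstacle is this final error-sum accounting, coupled with the $C_1,C_2$ verification. The delicate point is that the coefficient $\frac{9a_i^2}{2(3C_1+2A_i\sigma_2)}$ has a denominator mixing the constant $3C_1$ with the growing $2A_i\sigma_2$, so the cancellation against $(1+\rho)^{-i}$ is not exact term by term; one must bound it uniformly (for instance by lower-bounding the denominator by $2A_i\sigma_2$, using $3C_1\ge0$) and keep every residual constant small enough that the total stays $\le 2$. I would also flag that the active branch of the $\min$ defining $\rho$ forces the whole verification to split into the two regimes $\rho<1$ and $\rho=1$, and that tracking the exact power of $(1+\rho)$ through both the leading term and the error sum is what produces the clean $(1+\rho)^{-(t+1)}$ prefactor in \eqref{eq:cond22}.
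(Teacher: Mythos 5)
Your route is exactly the paper's route: the paper's entire proof of this theorem is the single sentence ``By using the setting in \eqref{eq:cond21} and Theorem \ref{thm:prime-result}, we can obtain \eqref{eq:cond22} directly,'' so everything you add --- the increments $a_0=1+\rho$, $a_t=\rho(1+\rho)^t$, the identities $\tfrac{9\mu_0\rho^2}{2}=\tfrac{4}{3}\sigma_2$ and $\tfrac{32\rho^3L_3}{3}=\tfrac{4\sigma_2}{R}$ (both of which do check out), and the error-sum accounting --- is detail the paper simply omits.

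However, the step you describe as the verification of the $C_1,C_2$ hypotheses does not close, and this is a genuine gap. For $t\ge1$ one has
\[
\frac{9\mu_0 a_t^2}{2A_{t+1}}-\frac{2}{3}A_t\sigma_2
=(1+\rho)^{t-1}\Bigl(\frac{9\mu_0\rho^2}{2}-\frac{2}{3}(1+\rho)\sigma_2\Bigr)
=\frac{2}{3}\sigma_2(1-\rho)\,(1+\rho)^{t-1},
\]
using your own identity. The geometric factor does \emph{not} cancel: whenever the unclamped branch $\rho<1$ is active, this quantity is strictly positive and grows like $(1+\rho)^{t}$, so $\max_{t\ge 0}$ of it is $+\infty$ and no finite $C_1$ (in particular not $\tfrac{9\mu_0(1+\rho)}{2}$, which only matches the $t=0$ term) satisfies the hypothesis of Theorem \ref{thm:prime-result}. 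The same happens for $C_2$: $\frac{32a_t^3L_3}{3A_{t+1}^2}-\frac{A_t\sigma_2}{R}=\frac{\sigma_2}{R}\bigl(4-(1+\rho)^2\bigr)(1+\rho)^{t-2}$, again positive and unbounded when $\rho<1$. So the plan ``check admissibility, then invoke Theorem \ref{thm:prime-result}'' fails at its first checkpoint, and only the clamped case $\rho=1$ survives. (This is as much a defect of the stated constants as of your write-up --- the paper's one-line proof glosses over it --- but a proof of the theorem as written cannot proceed this way without either restricting the max to a finite horizon and absorbing the resulting loss, or re-deriving Lemma \ref{lem:recur} with $t$-dependent slack.) Separately, even granting admissibility, the leading term comes out as $(1+\rho)^{-t+1}\bigl(\tfrac{9\mu_0}{4}\|x_0-x^*\|^2+\tfrac{32L_3}{9}\|x_0-x^*\|^3\bigr)$, a factor $(1+\rho)^2$ larger than the claimed $(1+\rho)^{-(t+1)}$ prefactor, and your error sum contributes $O\bigl((1+\rho)^{3-t}\bigr)$ rather than $2(1+\rho)^{-(t+1)}$; these constant-factor discrepancies must either be absorbed explicitly or the bound \eqref{eq:cond22} adjusted.
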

\begin{proof}
By using the setting in  \eqref{eq:cond21}  and Theorem \ref{thm:prime-result}, we can obtain \eqref{eq:cond22} directly. 
\end{proof}

Finally, we give Corollaries  \ref{thm:acc-online} and \ref{thm:acc-finite} to show the overall complexity in the online stochastic setting. 
\begin{corollary}[The online stochastic setting]\label{thm:acc-online}
Suppose that Assumptions \ref{ass:gradient-Hessian} and \ref{ass:g-H} hold. Then if $F(x)$ is convex, then with the probability $1-\delta$, AIPCNM can find an $\epsilon$-accurate solution in at most  $$\tO(\epsilon^{-2}) \text{  \rm{equivalent stochastic gradient iterations}}. $$

If $F(x)$ is $\sigma_2$-strongly convex, IPCNM can find an  $\epsilon$-accurate solution in at most  $$\tO(\sigma^{-2/3}\epsilon^{-1}) \text{  \rm{equivalent stochastic gradient iterations}}.$$  
\end{corollary}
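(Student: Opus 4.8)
The plan is to combine the per-iteration convergence rates already established for AIPCNM with explicit sample-size and subsolver-cost estimates drawn from Assumptions \ref{ass:gradient-Hessian}--\ref{ass:g-H}, and then to sum the total work over all iterations. First I would fix the iteration horizon. In the convex case Theorem \ref{thm:acc-convex} gives $\E[F(x_t)-F(x^*)]=\O(L_3D^3/t^3)$, so $t=\O(\epsilon^{-1/3})$ iterations suffice; in the strongly convex case Theorem \ref{thm:acc-strong-convex} gives $\E[F(x_t)-F(x^*)]=\O((1+\rho)^{-t})$ with $\rho=\Theta(\min\{1,(\sigma_2/(L_3R))^{1/3}\})$, so $t=\O(\rho^{-1}\log(1/\epsilon))=\tO(\sigma_2^{-1/3})$ iterations suffice.

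Second, at iteration $i$ I would translate the prescribed tolerances in \eqref{eq:cond11} (convex) and \eqref{eq:cond21} (strongly convex) into sample counts. These conditions bound the gradient error $\|g_i-\nabla f(y_i)-\nabla \tilde{f}_{\eta}(x_{i+1};y_i)\|$, the correction error $\|g_{i+1}^{\prime}-\nabla f(x_{i+1})\|$ (Assumption \ref{ass:g-prime}), and, through the tolerance $\mu_i$ of Assumption \ref{ass:H}, the Hessian error $\|H_i-\nabla^2 f(y_i)\|$. Using Assumption \ref{ass:gradient-Hessian}, a vector Bernstein bound for the averaged gradients $g_i,g_{i+1}^{\prime}$ and a matrix Bernstein bound for the averaged Hessian $H_i$ show that attaining an error $\delta$ with failure probability $\delta'$ costs $\tO(\tau_1^2\delta^{-2})$ (resp. $\tO(\tau_2^2\delta^{-2})$) samples, the $\log(1/\delta')$ dependence being absorbed into $\tO$. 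A union bound over the $t$ iterations with per-iteration failure probability $\delta/t$ then yields the global probability $1-\delta$ while adding only $\log(t/\delta)$ factors. Finally, the subsolver contributes, by Assumption \ref{ass:appro}, $\tO(\mathrm{cost}(H_i v))$ work; since one product with $H_i=\frac{1}{\bar n_{i2}}\sum_j\nabla^2 f(\cdot;\xi_j)$ costs $\bar n_{i2}$ stochastic Hessian-vector products and each such product is no costlier than a stochastic gradient evaluation (Assumption \ref{ass:gradient-Hessian}), this is $\tO(\bar n_{i2})$ equivalent gradient evaluations per iteration.

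Third, I would sum these contributions and express them in $\epsilon$ (and $\sigma_2$). In the convex case the gradient tolerance scales as $t^{-1/2}(i+2)^{-2}$, so $\hat n_{i1}=\tO(t(i+2)^4)$ and $\sum_{i\le t}\hat n_{i1}=\tO(t^6)=\tO(\epsilon^{-2})$, which dominates the Hessian and subsolver terms $\tO(\sum_{i\le t}(i+2)^2)=\tO(t^3)=\tO(\epsilon^{-1})$; this gives the claimed $\tO(\epsilon^{-2})$. In the strongly convex case the tolerances decay geometrically like $(1+\rho)^{-i/2}t^{-1/2}$, so the gradient counts grow like $(1+\rho)^i$; evaluating the geometric sum $\sum_{i\le t}(1+\rho)^i=\Theta((1+\rho)^t/\rho)=\Theta(\epsilon^{-1}/\rho)$ and folding in the remaining $\rho$, $\sigma_2$ and $t=\tO(\rho^{-1})$ factors with $\rho=\Theta(\sigma_2^{1/3})$ produces the advertised $\tO(\sigma_2^{-2/3}\epsilon^{-1})$; the Hessian count, which is $\tO(\sigma_2^{-2/3})$ per iteration (constant in $i$, since $\mu_i\equiv\mu_0=\Theta(\sigma_2^{1/3})$), and the subsolver cost are of the same or lower order.

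The hard part will be this last accounting. One must verify that the dominant term really is the gradient-sampling cost, and---crucially in the strongly convex case---that the geometric growth of the sample sizes is exactly balanced against the geometric decay of the required tolerances, so that the cumulative budget $\sum_i G_i$ of Theorem \ref{thm:prime-result} stays $\O(1)$ while the summed sampling cost lands at the stated powers of $\sigma_2$ and $\epsilon$. Tracking the interplay among the pre-fixed horizon $t$ (which enters the tolerances via the $t^{-1/2}$ factors), the geometric series, and the identity $\rho=\Theta(\sigma_2^{1/3})$ is the delicate step; once the target tolerances are in hand the concentration and union-bound arguments are routine.
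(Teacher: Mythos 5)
Your proposal follows essentially the same route as the paper: the paper's own proof of Corollary~\ref{thm:acc-online} is a one-line pointer to the settings of Theorems~\ref{thm:acc-convex} and \ref{thm:acc-strong-convex} combined with ``a similar analysis'' to Corollary~\ref{thm:online}, and that analysis is precisely your plan --- convert the prescribed tolerances into per-iteration sample sizes via the vector/matrix Bernstein inequalities (Lemmas~\ref{lem:vector} and \ref{lem:matrix}), take a union bound with per-iteration failure probability $\delta/t$, charge the subsolver via Assumption~\ref{ass:appro}, and sum, with the gradient-sampling cost dominating. Your accounting ($\tO(t^6)=\tO(\epsilon^{-2})$ in the convex case; the geometric sum $t(1+\rho)^{t}/\rho$ with $\rho=\Theta(\sigma_2^{1/3})$ in the strongly convex case) is in fact more explicit than what the paper writes down.
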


\begin{corollary}[The finite-sum setting]\label{thm:acc-finite}
Suppose that Assumptions \ref{ass:gradient-Hessian} and \ref{ass:g-H} hold. Then if $F(x)$ is convex, then with the probability $1-\delta$, AIPCNM can find an $\epsilon$-accurate solution in at most  $$\min\{\tilde{\mathcal{O}}(\epsilon^{-2}), \tilde{\mathcal{O}}(n\epsilon^{-1/3})\}  \text{  \rm{equivalent stochastic gradient iterations}}.$$ 

If $F(x)$ is $\sigma_2$-strongly convex, IPCNM can find an  $\epsilon$-accurate solution in at most  $$\min\left\{\tilde{\mathcal{O}}(\sigma_2^{-2/3}\epsilon^{-1}),\tilde{\mathcal{O}}(\sigma_2^{-1/3}n)\right\}  \text{  \rm{equivalent stochastic gradient iterations}}.$$   	
\end{corollary}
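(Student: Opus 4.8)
The plan is to reduce Corollary~\ref{thm:acc-finite} to the two convergence guarantees already established for AIPCNM, namely Theorem~\ref{thm:acc-convex} (convex) and Theorem~\ref{thm:acc-strong-convex} (strongly convex), and then convert the per-iteration accuracy requirements prescribed in \eqref{eq:cond11} and \eqref{eq:cond21} into sample sizes $\bar{n}_{i1},\bar{n}_{i2}$ and subsolver accuracies $\epsilon_i$. Concretely, I would proceed in two stages: first fix the number of outer iterations $T$ needed to drive the expected suboptimality below $\epsilon$, then bound the cumulative cost $\sum_{i=0}^{T}\bar{n}_{i1}+\sum_{i=0}^{T}\bar{n}_{i2}\log\frac{1}{\epsilon_i}$ of forming the inexact gradients/Hessians and of the subsolver calls (the latter counted in Hessian--vector products via Assumption~\ref{ass:appro}, which by Assumption~\ref{ass:gradient-Hessian} cost no more than a gradient evaluation). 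The key tool for the second stage is concentration: under Assumption~\ref{ass:gradient-Hessian} the sample-average gradient obeys a vector Bernstein bound and the sample-average Hessian a matrix Bernstein bound, so that forcing each of $\|g_i-\nabla f(y_i)\|$, $\|g_{i+1}'-\nabla f(x_{i+1})\|\le\delta_i$ and $\|\tfrac{1}{\bar{n}_{i2}}\sum\nabla^2 f(x;\xi)-\nabla^2 f(y_i)\|\le\mu_i/2$ with probability $1-p_i$ costs $\bar{n}_{i1}=\tO(\tau_1^2/\delta_i^2+\gamma_1/\delta_i)$ and $\bar{n}_{i2}=\tO(\tau_2^2/\mu_i^2+\gamma_2/\mu_i)$ samples, the logarithmic factors arising from $\log(1/p_i)$. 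I would split the combined error $\|g_i-\nabla f(y_i)-\nabla\tilde{f}_{\eta}(x_{i+1};y_i)\|$ appearing in $G_i$ by the triangle inequality into the gradient-sampling part (handled by $\bar{n}_{i1}$) and the subsolver residual $\|\nabla\tilde{f}_{\eta}(x_{i+1};y_i)\|$ (handled by the accuracy $\epsilon_i$, via the footnote to Theorem~\ref{thm:prime-result}).

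For the convex case, Theorem~\ref{thm:acc-convex} gives $\E[F(x_t)-F(x^*)]=\O(L_3D^3/t^3)$, so $T=\O(\epsilon^{-1/3})$ iterations suffice. The accuracies in \eqref{eq:cond11} scale like $\delta_i\propto t^{-1/2}(i+2)^{-2}$ and $\mu_i\propto (i+2)^{-1}$; for small $\epsilon$ the variance term dominates, giving $\bar{n}_{i1}=\tO(t(i+2)^4)$ and $\bar{n}_{i2}=\tO((i+2)^2)$. Summing over $i=0,\dots,T$ with $t=T$ yields $\sum_i\bar{n}_{i1}=\tO(T\cdot T^5)=\tO(T^6)=\tO(\epsilon^{-2})$, which dominates the Hessian and subsolver cost $\tO(T^3)=\tO(\epsilon^{-1})$. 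Thus the subsampled strategy costs $\tO(\epsilon^{-2})$. In the finite-sum setting one may instead use exact gradients and Hessians ($n$ samples per iteration, since Assumption~\ref{ass:g-H} caps the sample sizes at $n$), costing $\O(nT)=\O(n\epsilon^{-1/3})$; taking the better of the two gives the claimed $\min\{\tO(\epsilon^{-2}),\tO(n\epsilon^{-1/3})\}$.

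For the strongly convex case, Theorem~\ref{thm:acc-strong-convex} gives linear convergence with rate $(1+\rho)^{-t}$ and $\rho=\Theta(\sigma_2^{1/3})$, so $T=\O(\rho^{-1}\log\frac{1}{\epsilon})=\tO(\sigma_2^{-1/3})$. Here \eqref{eq:cond21} forces $\delta_i\propto (1+\rho)^{-i/2}t^{-1/2}$ while $\mu_i\equiv\mu_0$ is constant, so $\bar{n}_{i1}=\tO((1+\rho)^{i}t)$ and $\bar{n}_{i2}=\tO(1)$. The geometric sum gives $\sum_i\bar{n}_{i1}=\tO(t\,(1+\rho)^{T}/\rho)$; using $(1+\rho)^{T}=\tO(\epsilon^{-1})$ (the condition for reaching accuracy $\epsilon$) and $t=T=\tO(\rho^{-1})$ this equals $\tO(\rho^{-2}\epsilon^{-1})=\tO(\sigma_2^{-2/3}\epsilon^{-1})$, again dominating the Hessian and subsolver cost $\tO(T)$. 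The full-batch alternative costs $\O(nT)=\tO(\sigma_2^{-1/3}n)$, giving $\min\{\tO(\sigma_2^{-2/3}\epsilon^{-1}),\tO(\sigma_2^{-1/3}n)\}$.

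The main obstacle is the probabilistic bookkeeping: Theorems~\ref{thm:acc-convex} and~\ref{thm:acc-strong-convex} control the \emph{expected} suboptimality under \emph{deterministic} error bounds, whereas the sampled quantities satisfy those bounds only with high probability, so I must union-bound the $O(T)$ concentration events (and the subsolver guarantees of Assumption~\ref{ass:appro}) at per-event failure probability $\sim\delta/T$ to obtain a single $1-\delta$ statement; this is precisely where the hidden poly-logarithmic factors in $\tO(\cdot)$ enter, and one must verify they do not perturb the stated exponents. A secondary point requiring care is that the subsolver accuracies $\epsilon_i$ needed to control the residual $\|\nabla\tilde{f}_{\eta}(x_{i+1};y_i)\|$ enter the cost only through $\log(1/\epsilon_i)$, so that even a rapidly decreasing $\epsilon_i$ contributes merely logarithmically and the Hessian-sampling term stays subdominant in both cases.
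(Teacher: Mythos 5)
Your proposal is correct and follows essentially the same route as the paper: the paper's own proof is a two-line reduction that invokes the online-setting analysis (Bernstein concentration to size $\bar{n}_{i1},\bar{n}_{i2}$ against the accuracy schedules of Theorems~\ref{thm:acc-convex} and~\ref{thm:acc-strong-convex}, summed over $T=\O(\epsilon^{-1/3})$ resp.\ $T=\tO(\sigma_2^{-1/3})$ iterations) and then caps the per-iteration sample size at $n$, switching to exact gradients/Hessians once the required sample count exceeds $n$, which yields the stated minima. Your accounting of the dominant gradient-sampling cost, the subdominant Hessian/subsolver cost, and the union-bound bookkeeping matches (and is more explicit than) what the paper actually writes.
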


 % By this strategy, except the constant, Alg. \ref{alg:icnm} keeps the same convergence rate with its exact counterpart. The concrete conditions are given in Subsections \ref{sub:nonstrong} and \ref{sub:strong} respectively.

\section{The Proximal SVRG with Cubic Regularization}\label{sec:cubic-svrg}
In this section, we propose an efficient algorithm called Cubic Proximal Stochastic Variance Reduced Gradient method (Cubic-Prox-SVRG) in Algorithm \ref{alg:cubic} to solve the cubic regularized second-order subproblem $\min_{x\in\bbR^d}\tilde{f}_{\eta}(x, y)$, where $\tilde{f}_{\eta}(x, y)$ is defined in \eqref{eq:subprob}. In this section we assume that the inexact Hessian $H$ is obtained by subsampling by Assumption \ref{ass:g-H}, $i.e,$ $H\overset{\defi}{=}\frac{1}{n}\sum_{i=1}^n H_i$, where $n$ is the number of subsamples, and we assume for any $v\in\bbR^d$, the cost of $H_i v$ is $O(d)$. 

Then we reformulate the subproblem $\min_{x\in\bbR^d}\tilde{f}_{\eta}(x, y)$ as
\begin{eqnarray}
\min_{w\in\bbR^d}P(w)  \overset{\defi}{=} \frac{1}{n}\sum_{i=1}^n \psi_i(w) +  r(w),
\end{eqnarray}
where $w \overset{\defi}{=} x-y, \psi_i(w) \overset{\defi}{=} w^TH_i w, r(w)\overset{\defi}{=} \frac{\eta}{3}\|\w\|^3+h(w+y)$.

The Cubic-Prox-SVRG algorithm is motivated by the uniform property of degree $3$ 
\begin{eqnarray}
\frac{1}{3}\|w\|^3\ge \frac{1}{3}\|u\|^3+\langle \nabla \frac{1}{3}\|u\|^3, w-u\rangle + \frac{1}{6}\|w-u\|^3
\end{eqnarray}
of the cubic regularizer $\frac{1}{3}\|w\|^3$ \cite{nesterov2008accelerating}.  

 Assume $H\succeq 0$,  $h(w+y)$ is $\sigma_2$-strongly convex $ (\sigma_2\ge0)$. Then $P(w)$ is $\sigma_2$-strongly convex and $\frac{\eta}{2}$-uniformly convex of degree $3$. Meanwhile denote $ w^*\overset{\defi}{=}\argmin_{w\in\bbR^d}P(w).$

 % The Lipschitz constant $L_{2i}$ as well as the sample size $n$ is different for .  

For two points $w\in\bbR^d, w^{\prime}\in\bbR^d$, the uniform convexity of degree $3$ is equivalent to $\frac{1}{2}\|w-w^{\prime}\|$-strong convexity. Therefore, the $3$-rd order uniform convexity is 
stronger when the two points are far away from each other and  is weaker than the strong convexity when the two points are close each other. Meanwhile, it is known that if $P(w)$ is smooth and strongly convex, gradient descent methods can converge with a linear rate \cite{nesterov1998introductory}. Combing the two facts, when $P(w)$ is smooth and $3$-rd order uniformly convex, we may obtain an gradient based algorithm with a two-stage convergence rate: a superlinear rate when the iterative solution is far away from the optimal point and a sublinear rate when they are close each other.

To verify this intuition, we propose a new algorithm called Cubic regularized Proximal SVRG (Cubic-Prox-SVRG) in Alg. \ref{alg:cubic}, which is a variant of the well-known Prox-SVRG algorithm \cite{xiao2014proximal}.  Compared with Prox-SVRG in \cite{xiao2014proximal}, the difference is only the number of the inner iteration $M_s$ and the learning rate for each outer iteration $\tau_s$. In Theorem \ref{thm:converge}, we give the two-stage convergence rate of Cubic-Prox-SVRG.

\begin{algorithm}[!ht]
    \caption{Cubic proximal stochastic variance reduced gradient}
\begin{algorithmic}[1]%一行一个标行号 
	\STATE Initialization: $\tilde{w}_0 = 0, m = O(n), \tau_0 = 0.1/L_2$
	\STATE $Q=\{q_1, q_2,\ldots, q_k, \dots, q_n\},$ where $q_k\overset{\defi}{=}\frac{\|H_k\|}{\sum_{i=1}^n \|H_i\|}$;
	 $L_2 \overset{\defi}{=} \frac{1}{n}\sum_{i=1}^n \|H_{i}\|$
	 
	 $\kappa_2 \overset{\defi}{=} \frac{L_2}{\sigma_2}, \kappa_3\overset{\defi}{=}\frac{L_2}{2}\left(\frac{12}{\eta}\right)^{2/3}$
	 
	 $M_s\overset{\defi}{=} \lceil 100\min\{\kappa_2,\kappa_3\max\{m, (P(\tilde{w}_{s-1}) - P(w^*))^{-1/3}\}\}\rceil;$

	$ \tau_s \overset{\defi}{=} \tau_0\min\{1,m^{-\frac{1}{2}}(P(\tilde{w}_{s-1}) - P(w^*))^{-1/6}\}$
    % \STATE Initialization: $\tilde{w}_0=0$, 
   % $\rho\overset{\defi}{=}\frac{1}{100L_2\eta_0(1-4L_2\eta_0)}+\frac{4L_2\eta_0(100\kappa m+1)}{(1-4L_2\eta_0)100\kappa m}<1$
   % \STATE Set $s_+$ is the minimal outer iteration number such that $P(\tilde{w}_{s_+})-P(w_*)<\frac{1}{m^3}  $
       % \STATE $\hat{i}=\argmin_{i} {h_{i,J_i+1}}; \hat{k}=J_{\hat{i}}+1$
    % \STATE For all $i\in V$, set $K_i{=}J_i+1, K_i^{\prime}{=}J_i^{\prime}$
    \vspace{0.02in}
    % \STATE Set $U= V$
    % \FOR{$s = 1, 2, \ldots, $ }
    \FOR{$s =1, 2, 3, \ldots$}
    % \STATE $M_s = \lceil 100\kappa \max\{m, (P(\tilde{w}_{s-1}) - P(w^*))^{-1/3}\}\rceil$
    % \STATE $\tau_s = \tau_0\min\{1,m^{-\frac{1}{2}}(P(\tilde{w}_{s-1}) - P(w^*))^{-1/6}\}$

	% \IF{$P(\tilde{w}_{s-1})-P(w_*)\ge \frac{1}{m^3}  $ }
	% \STATE $M = m$
	% \STATE $\eta_s = \eta_0/\sqrt{m(P(\tilde{w}_{s-1})-P(w_*))^{1/3}}$
	% \ELSE 
	% \IF{$P(\tilde{w}_{s-1})-P(w_*)\le \rho^{s-s_+ +1}\frac{1}{m^3} $}
	% \STATE $\tilde{w}_{s}=\tilde{w}_{s-1}$
	% \STATE \textbf{continue}
	% \ELSE
	% \STATE $M = \lceil(P(\tilde{w}_{s-1})-P(w_*))^{-1/3}\rceil$
	% \STATE $\eta_s = \eta_0$
	% \ENDIF
	% \ENDIF
 %   	\IF{$P(\tilde{w}_{s-1})-P(w_*)\le \rho^s(P(\tilde{w}_0) - P(w_*))$}
	% \STATE	$\tilde{w}_s=\tilde{w}_{s-1}$
	% \ELSE
	% \STATE $\tilde{w} = \tilde{w}_{s-1}$
 %    \STATE $\tilde{\mu}=\frac{1}{n}\sum_{i=1}^n \nabla \psi_i(\tilde{w})$ ,  
 %   	\STATE $w_0 = \tilde{w}$
 %   	\STATE $M = \max\{m, \lceil(P(w)-P(w_*))^{-1/3}\rceil\}$

 %   	\IF{$M = m$}
	% \STATE $\eta = \eta_0\sqrt{m(P(w)-P(w_*))^{1/3}}$
	% \ELSE
	% \STATE $\eta = \eta_0$
	
	% \ENDIF

	\STATE $\tilde{w} = \tilde{w}_{s-1}$
    \STATE $\tilde{\mu}=\frac{1}{n}\sum_{i=1}^n \nabla \psi_i(\tilde{w})$
   	\STATE $w_0 = \tilde{w}$
	\FOR{$k=1, 2, 3,..., M_s$}
	\STATE Pick $i_k\in \{1, ..., n\}$ randomly according to $Q$
	% \ENDIF
	\STATE $\tilde{\nabla}_k =( \nabla \psi_{i_k}(w_{k-1}) -\nabla \psi_{i_k}(\tilde{w}))/(q_{i_k}n)+ \tilde{\mu}$
	\STATE ${w}_k =\text{prox}_{r}(w_{k-1} -\tau_s \tilde{\nabla}_k   ) $
	% \STATE ${w}_k = \argmin_{w\in\bbR^d}\left\{ \langle \tilde{\nabla}_k, w-w_k\rangle + \frac{1}{2\tau_s}\|w-w_{k-1}\|^2 + r(w) \right\}$
	\ENDFOR
	\STATE $\tilde{w}_s = \frac{1}{M_s}\sum_{k=1}^{M_s} w_k$
	
	% \ENDIF
	% \STATE option I: set $\hat{w} = w_m$
	% \STATE option II: set $\hat{w}  = w_t$ for randomly chosen $t\in\{1, 2, \ldots, M\}$
	    % \STATE Set $\tilde{\beta}$ as the solution of $p\beta-q=h(\beta)/\eta$
    % \STATE Find the largest integer $k$ such that $P(\hat{w}_s)-P(w^*)\le \rho^{s-s_+ +k}~\frac{1}{m^3}$ 
    % \STATE Set $\tilde{w}_{s+1+ \max\{k, 0\}} = \hat{w}$ 
    % \STATE Set $s\leftarrow s + 1+ \max\{k, 0\}$ and go to step 5
    \ENDFOR
    % \ENDFOR
   % \STATE $\beta = \max\{\min\{\beta, E\}, D\}$
\end{algorithmic}\label{alg:cubic}
\end{algorithm}

\begin{theorem}\label{thm:converge}
Assume that $s_1$ is the smallest number of outer iteration that satisfies $$P(\tilde{w}_{s_1})-P(w^*)\le \frac{1}{m^3}.$$ 
Then it follows that 
\begin{eqnarray*}
&&\!\!\!\!\!\!\!\!\!\E[P(\tilde{w}_s) - P(w^*)]\le\begin{cases}
\left(\frac{\rho}{\sqrt{m}}\right)^{6\left(1-\left(\frac{5}{6}\right)^s\right)}\left(P(\tilde{w}_0)-P(w^*)\right)^{\left(\frac{5}{6}\right)^s}, &\text{ {\rm if } } s\le s_1 \\
\rho^{s-s_1} \left(P(\tilde{w}_{s_1})-P(w^*)\right), & \text{ {\rm if } } s> s_1 \\
\end{cases}
\end{eqnarray*}
where $\rho \overset{\rm def}{=} \frac{1}{100L_2\tau_0(1-4L_2\tau_0)} +\frac{4L_2\tau_0(100\kappa_2 m+1)}{100(1-4L_2\tau_0)\kappa_2 m}<1$.
\end{theorem}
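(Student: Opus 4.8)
The plan is to adapt the one-epoch analysis of (importance-sampled) Prox-SVRG \cite{xiao2014proximal} to the objective $P$, exploiting its two special features: the finite-sum part $\frac1n\sum_i\psi_i$ with $\psi_i(w)=w^\top H_i w$ is quadratic, so under the sampling law $Q$ (with $q_k=\|H_k\|/\sum_i\|H_i\|$) the variance-reduced gradient has clean bounds governed by the average smoothness $L_2=\frac1n\sum_i\|H_i\|$; and $P$ is simultaneously $\sigma_2$-strongly convex and $\frac{\eta}{2}$-uniformly convex of degree $3$. Writing $\Delta_s=P(\tilde w_s)-P(w^*)$, I would first establish a conditional one-epoch contraction of the form $\E[\Delta_s\mid\mathcal F_{s-1}]\le \frac{1}{1-4L_2\tau_s}\big(\frac{\|\tilde w_{s-1}-w^*\|^2}{2\tau_s M_s}+4L_2\tau_s\frac{M_s+1}{M_s}\Delta_{s-1}\big)$, valid whenever $\tau_s<\frac{1}{4L_2}$. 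This is the standard Prox-SVRG bound, and the quadratic form of the $\psi_i$ makes the variance step routine; the factor $1-4L_2\tau_0$ already appearing in the stated $\rho$ confirms this is the right normalization.

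The crux is converting the distance term $\|\tilde w_{s-1}-w^*\|^2$ with the appropriate modulus. I would keep two lower bounds available: the cubic one $\Delta_{s-1}\ge\frac{\eta}{6}\|\tilde w_{s-1}-w^*\|^3$, giving $\|\tilde w_{s-1}-w^*\|^2\le(6/\eta)^{2/3}\Delta_{s-1}^{2/3}$, and the quadratic one $\|\tilde w_{s-1}-w^*\|^2\le\frac{2}{\sigma_2}\Delta_{s-1}$. In the far regime $\Delta_{s-1}>m^{-3}$ (which is exactly $s\le s_1$) one has $\Delta_{s-1}^{-1/3}<m$, so $M_s=100\min\{\kappa_2,\kappa_3 m\}$ and $\tau_s=\tau_0 m^{-1/2}\Delta_{s-1}^{-1/6}$; substituting the cubic distance bound, both terms collapse to the common scale $\Theta(\Delta_{s-1}^{5/6}/\sqrt m)$. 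Here the identities $\frac{(6/\eta)^{2/3}}{\kappa_3}=2^{1/3}/L_2$ and $L_2\tau_0=0.1$ make the prefactors line up and simultaneously force $\tau_s<\frac{1}{4L_2}$ throughout this regime, yielding the superlinear recursion $\E[\Delta_s\mid\mathcal F_{s-1}]\le\frac{\rho}{\sqrt m}\Delta_{s-1}^{5/6}$ with exponent $5/6<1$.

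To pass from the conditional recursion to the stated bound, I would take total expectations and invoke Jensen for the concave map $x\mapsto x^{5/6}$, so that $\E[\Delta_s]\le\frac{\rho}{\sqrt m}(\E[\Delta_{s-1}])^{5/6}$; unrolling and summing $\sum_{k=0}^{s-1}(5/6)^k=6\big(1-(5/6)^s\big)$ reproduces the first branch $(\rho/\sqrt m)^{6(1-(5/6)^s)}\Delta_0^{(5/6)^s}$. For $s>s_1$, once $\Delta\le m^{-3}$ the step size saturates at $\tau_s=\tau_0$ and the inner length at its capped value $M_s=100\min\{\kappa_2,\kappa_3\Delta_{s-1}^{-1/3}\}$; feeding this into the same one-epoch bound—using the $\sigma_2$ bound for the distance term—reduces it to a fixed linear contraction $\E[\Delta_s]\le\rho\,\E[\Delta_{s-1}]$, where $\rho<1$ is the stated constant taken as the worst case over the two branches of $M_s$, giving the second branch $\rho^{s-s_1}\Delta_{s_1}$.

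The main obstacle is the constant-level bookkeeping across the two regimes: I must verify that the adaptive $(\tau_s,M_s)$ force the two competing terms of the one-epoch bound to share the exponent $5/6$ in stage one (rather than leaving a residual $\Delta^{2/3}$ or $\Delta$ term), and that the assembled prefactor is exactly the claimed $\rho$, all while keeping $\tau_s$ below $\frac{1}{4L_2}$. A secondary subtlety is that $M_s$ and $\tau_s$ depend on the random quantity $\Delta_{s-1}$ (hence on $P(w^*)$), so the crossover index $s_1$ is data-dependent; I would resolve this by conditioning on $\mathcal F_{s-1}$, under which $\tau_s,M_s$ are measurable and the one-epoch factor is a legitimate multiplier, and then running the deterministic recursion on $\E[\Delta_s]$ rather than on the random iterates themselves.
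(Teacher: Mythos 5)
Your proposal follows essentially the same route as the paper's proof: it starts from the standard one-epoch Prox-SVRG bound (the paper's Lemma 7), converts the distance term via the degree-3 uniform convexity of $P$ to $\|\tilde w_{s-1}-w^*\|^2\le(6/\eta)^{2/3}\Delta_{s-1}^{2/3}$, uses the adaptive $(\tau_s,M_s)$ to balance both terms at the exponent $5/6$, applies Jensen for $x\mapsto x^{5/6}$ after taking total expectations, telescopes with $\sum_{k=0}^{s-1}(5/6)^k=6(1-(5/6)^s)$, and switches to the linear-rate branch once $\Delta\le m^{-3}$. The argument and the constants you identify (e.g.\ $L_2\tau_0=0.1$ and the definition of $\rho$) match the paper's, so no further comparison is needed.
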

It should be noted that $\eta=\O(1), L_2=\O(1)$. Then by the definition, $\kappa_3$ is an $O(1)$ constant.  

By Theorem \ref{thm:converge}, the outer iteration $\tilde{w}_s$ will converge to a neighborhood of $w^*$ in a superlinear rate until $P(\tilde{w}_{s_1})-P(w^*)\le \frac{1}{m^3}$. Then it will converge in a linear rate. In the convex setting,  the number of stochastic samples in IPCNM and AIPCNM will be $\tO(t^2)$ in the $t$-th iteration. To ensure the convergence rate in Theorems \ref{thm:nonstrong} and  \ref{thm:acc-convex}, we need the solving accuracies of the subproblem are $\O(\frac{1}{t^3)}$ and $\O(\frac{1}{t^5)}$ respectively, while by Theorem \ref{thm:converge}, if $n=\O(t^2)$, then Cubic-Prox-SVRG can converge to an $\O(1/t^6)$-accurate solution in a superlinear rate.  In the strongly convex setting, Cubic-Prox-SVRG will finally converge in a linear rate and thus satisfies Assumption \ref{ass:appro}. 
 % Therefore, Cubic-Prox-SVRG can be used to find a small enough accurate solution 

\bibliographystyle{alpha}
\bibliography{references}

\clearpage
\onecolumn
\appendix

\section{Some technical results}
\begin{lemma}[\cite{nesterov2006cubic}]\label{lem: Lipschitz}
Suppose $f(x)$ has $L_3$-Lipschitz Hessians. Then we have $\forall x\in\bbR^d, y\in \bbR^d$, 
\begin{eqnarray}
f(y)&\le& f(x)+\langle \nabla f(x), y-x\rangle+ \frac{1}{2}\langle \nabla^2 f(x)(y-x), y-x\rangle + \frac{L_3}{6}\|y-x\|^3.
\end{eqnarray}
and 
\begin{eqnarray}
f(y)&\ge& f(x)+\langle \nabla f(x), y-x\rangle+ \frac{1}{2}\langle \nabla^2 f(x)(y-x), y-x\rangle - \frac{L_3}{6}\|y-x\|^3.
\end{eqnarray}
\end{lemma}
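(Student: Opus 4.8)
The plan is to reduce both inequalities to a single two-sided bound on the second-order Taylor remainder, obtained by restricting $f$ to the segment between $x$ and $y$ and invoking the integral form of Taylor's theorem. First I would define the scalar function $\phi(\tau) \overset{\rm def}{=} f(x + \tau(y-x))$ on $[0,1]$, so that $\phi(0) = f(x)$, $\phi(1) = f(y)$, $\phi'(\tau) = \langle \nabla f(x+\tau(y-x)), y-x\rangle$, and $\phi''(\tau) = \langle \nabla^2 f(x+\tau(y-x))(y-x), y-x\rangle$. Twice differentiability of $f$ makes $\phi$ twice differentiable, and the integral-remainder form of Taylor's theorem (proved by one integration by parts) gives $\phi(1) = \phi(0) + \phi'(0) + \int_0^1 (1-\tau)\phi''(\tau)\,d\tau$, i.e. $f(y) = f(x) + \langle \nabla f(x), y-x\rangle + \int_0^1 (1-\tau)\langle \nabla^2 f(x+\tau(y-x))(y-x), y-x\rangle\,d\tau$.

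The key algebraic step is to recognize the target quadratic term as a constant instance of the same integral. Since $\int_0^1 (1-\tau)\,d\tau = \tfrac12$, I would write $\tfrac12\langle \nabla^2 f(x)(y-x), y-x\rangle = \int_0^1 (1-\tau)\langle \nabla^2 f(x)(y-x), y-x\rangle\,d\tau$ and subtract, so that the whole discrepancy collapses into one integral involving only the difference of Hessians, namely $\int_0^1 (1-\tau)\langle [\nabla^2 f(x+\tau(y-x)) - \nabla^2 f(x)](y-x), y-x\rangle\,d\tau$.

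Then I would bound the integrand pointwise using the $L_3$-Lipschitz Hessian hypothesis together with Cauchy--Schwarz (equivalently the definition of the spectral norm): $|\langle [\nabla^2 f(x+\tau(y-x)) - \nabla^2 f(x)](y-x), y-x\rangle| \le \|\nabla^2 f(x+\tau(y-x)) - \nabla^2 f(x)\|\,\|y-x\|^2 \le L_3\,\|\tau(y-x)\|\,\|y-x\|^2 = L_3\tau\|y-x\|^3$. Integrating against the nonnegative weight $(1-\tau)$ and using $\int_0^1 \tau(1-\tau)\,d\tau = \tfrac12 - \tfrac13 = \tfrac16$ yields the two-sided estimate that the remainder lies in the interval $[-\tfrac{L_3}{6}\|y-x\|^3,\ \tfrac{L_3}{6}\|y-x\|^3]$; reading off the upper and lower ends gives the two displayed inequalities simultaneously.

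There is no serious obstacle here: the argument is entirely a controlled Taylor expansion. The only points requiring a little care are the justification of the integral remainder form (which needs continuity of $\phi''$, guaranteed by twice differentiability of $f$ and the continuity of $\nabla^2 f$ implied by the Lipschitz bound) and the bookkeeping of the constant, for which the two elementary integrals $\int_0^1(1-\tau)\,d\tau = \tfrac12$ and $\int_0^1 \tau(1-\tau)\,d\tau = \tfrac16$ must line up so that the quadratic term is canceled exactly and the cubic coefficient comes out as precisely $L_3/6$.
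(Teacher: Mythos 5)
Your proof is correct: the Taylor expansion with integral remainder, the cancellation of the quadratic term via $\int_0^1(1-\tau)\,d\tau=\tfrac12$, the pointwise Lipschitz bound on the Hessian difference, and the constant $\int_0^1\tau(1-\tau)\,d\tau=\tfrac16$ all check out, giving the two-sided bound $|f(y)-f(x)-\langle\nabla f(x),y-x\rangle-\tfrac12\langle\nabla^2 f(x)(y-x),y-x\rangle|\le\tfrac{L_3}{6}\|y-x\|^3$ from which both inequalities follow. The paper does not prove this lemma itself but imports it from \cite{nesterov2006cubic}, and the argument given there is essentially the same standard integral-remainder computation you carried out.
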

\begin{lemma}[\cite{nesterov2008accelerating}]\label{lem:inner-prod}
For any $x\in\bbR^d$ and $y\in\bbR^d$, we have
\begin{eqnarray}
|\langle x, y\rangle|\le \frac{1}{p}\sigma\|x\|^p + \frac{p-1}{p}\left(\frac{1}{\sigma}\right)^{\frac{1}{p-1}}\|y\|^{\frac{p}{p-1}},
\end{eqnarray}
where $\sigma>0, p \ge 2$.
\end{lemma}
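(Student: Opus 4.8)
The plan is to reduce the vector inequality to a scalar inequality between the norms $\|x\|$ and $\|y\|$, and then invoke the classical Young inequality for products with the conjugate exponent pair $(p,q)$, where $q \overset{\rm def}{=} \frac{p}{p-1}$ so that $\frac1p + \frac1q = 1$.

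First I would apply the Cauchy--Schwarz inequality to bound the left-hand side by the product of the norms,
\begin{equation*}
|\langle x, y\rangle| \le \|x\|\,\|y\|,
\end{equation*}
so that it suffices to prove the scalar inequality $ab \le \frac1p \sigma a^p + \frac{p-1}{p}\sigma^{-1/(p-1)} b^{\,q}$ for the nonnegative reals $a \overset{\rm def}{=} \|x\|$ and $b \overset{\rm def}{=} \|y\|$. Next I would introduce the rescaling $\tilde a \overset{\rm def}{=} \sigma^{1/p} a$ and $\tilde b \overset{\rm def}{=} \sigma^{-1/p} b$, chosen precisely so that $\tilde a\,\tilde b = ab$. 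This reduces the target to the symmetric Young form $\tilde a\,\tilde b \le \frac1p \tilde a^{\,p} + \frac1q \tilde b^{\,q}$, and a direct check shows $\frac1p \tilde a^{\,p} = \frac1p \sigma a^p$ and $\frac1q \tilde b^{\,q} = \frac{p-1}{p}\sigma^{-1/(p-1)} b^{\,q}$, which matches the claimed constants once one recalls $b^{\,q} = \|y\|^{p/(p-1)}$.

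To close the argument I would establish the symmetric Young inequality itself, for which the cleanest route is convexity of the exponential. Treating the trivial cases $\tilde a = 0$ or $\tilde b = 0$ separately, I would write $\tilde a = e^{u/p}$ and $\tilde b = e^{v/q}$ and apply Jensen's inequality for the convex function $\exp$ with weights $\frac1p, \frac1q$ summing to one, namely $e^{\frac1p u + \frac1q v} \le \frac1p e^{u} + \frac1q e^{v}$. Since $e^{\frac1p u + \frac1q v} = \tilde a\,\tilde b$ while $e^{u} = \tilde a^{\,p}$ and $e^{v} = \tilde b^{\,q}$, this is exactly the desired bound.

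There is no serious obstacle here; the statement is a standard weighted arithmetic--geometric-mean inequality, and the only points requiring care are the bookkeeping of the exponents (verifying that $q = p/(p-1)$ yields the stated power $\sigma^{-1/(p-1)}$ and the exponent $p/(p-1)$ on $\|y\|$) and the choice of the scaling factor $\sigma^{1/p}$ that absorbs the parameter $\sigma$ correctly. The hypothesis $p \ge 2$ is not essential for the inequality itself (any $p > 1$ would do), but it keeps the conjugate exponent $q \in (1,2]$ within the regime in which the lemma is later applied.
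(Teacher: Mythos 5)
Your proposal is correct. The paper does not actually prove this lemma --- it imports it directly from \cite{nesterov2008accelerating} --- and your argument (Cauchy--Schwarz to reduce to scalars, then the weighted Young inequality obtained by the rescaling $\tilde a = \sigma^{1/p}\|x\|$, $\tilde b = \sigma^{-1/p}\|y\|$ and convexity of $\exp$) is precisely the standard derivation behind the cited result, with the exponent bookkeeping $\sigma^{-q/p} = \sigma^{-1/(p-1)}$ and $q = p/(p-1)$ checking out; your observation that any $p>1$ suffices is also accurate.
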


\begin{lemma}[Vector Bernstein Inequality, Lemma 18 in \cite{kohler2017sub}]\label{lem:vector}Let $x_1,\ldots, x_n$ be independent vector-valued random variables with common dimension d and assume that each one is centered, uniformly bounded and also the variance is bounded above:
\begin{equation}
\E[x_i] = 0, \quad\quad \|x_i\|_2\le \gamma_1,   \quad\quad\E[\|x_i\|^2]\le \tau_1^2.
\end{equation}
Let 
\[
z = \frac{1}{n}\sum_{i=1}^n x_i,
\]
then we have for $0<\epsilon<\tau_1^2/\gamma_1$,
\[
P(\|z\|\ge \epsilon)\le \exp\left(-n\frac{\epsilon^2}{8\tau_1^2}+\frac{1}{4}\right).
\]
\end{lemma}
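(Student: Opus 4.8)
The plan is to reduce this dimension‑free vector tail bound to a single scalar Chernoff argument applied to the norm of the sum $S\overset{\rm def}{=}\sum_{i=1}^n x_i = nz$, so that $P(\|z\|\ge\epsilon)=P(\|S\|\ge n\epsilon)$. Two ingredients are needed: a bound on the typical size $\E\|S\|$, and an exponential–moment (sub‑Gaussian, but with the \emph{variance} $\tau_1^2$ rather than the range $\gamma_1^2$ governing the quadratic term) estimate for $\|S\|$ around its mean. The first is immediate from Jensen and independence: $\E\|S\|\le\sqrt{\E\|S\|^2}=\sqrt{\sum_{i}\E\|x_i\|^2}\le\sqrt{n}\,\tau_1$. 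The obstacle to the second is that $g\mapsto\|g\|$ is non‑differentiable at the origin, which blocks a clean second‑order expansion of the increment $\|S_{k-1}+x_k\|-\|S_{k-1}\|$; I would remove this singularity by working with the smoothed norm $\Phi(g)\overset{\rm def}{=}\sqrt{\|g\|^2+\theta}$ for a parameter $\theta>0$, noting the sandwich $\|g\|\le\Phi(g)\le\|g\|+\sqrt{\theta}$, that $\Phi$ is $1$‑Lipschitz ($\|\nabla\Phi\|\le1$), and that its Hessian is bounded in spectral norm by $\theta^{-1/2}$ everywhere.

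The heart of the argument is a telescoping moment‑generating‑function (MGF) bound along the partial sums $S_k=\sum_{i\le k}x_i$. Conditioning on the past and Taylor‑expanding the \emph{smooth} function $\Phi$ to second order gives
\[
\Phi(S_k)\le \Phi(S_{k-1})+\langle \nabla\Phi(S_{k-1}),x_k\rangle+\tfrac{1}{2\sqrt{\theta}}\,\|x_k\|^2 .
\]
Taking the conditional exponential moment, the linear term $\langle\nabla\Phi(S_{k-1}),x_k\rangle$ is a centered scalar bounded by $\gamma_1$ (since $\|\nabla\Phi\|\le1$ and $\|x_k\|\le\gamma_1$) with conditional second moment at most $\E\|x_k\|^2$, so a scalar Bernstein MGF estimate controls it using the range $\gamma_1$ to pin the admissible range of $\lambda$ and the variance proxy $\E\|x_k\|^2$ to supply the quadratic coefficient; the quadratic drift contributes $\tfrac{\lambda}{2\sqrt{\theta}}\E\|x_k\|^2$. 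Iterating over $k=1,\dots,n$ from $\Phi(S_0)=\sqrt{\theta}$ and using $\sum_k\E\|x_k\|^2\le n\tau_1^2$, and choosing $\theta$ of order $n\tau_1^2$ to balance $\lambda\sqrt{\theta}$ against the accumulated drift, yields a bound of the form $\E[e^{\lambda\|S\|}]\le\exp\!\big(\lambda\cdot O(\sqrt{n}\,\tau_1)+O(\lambda^2 n\tau_1^2)\big)$, valid for $\lambda\lesssim 1/\gamma_1$.

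Finally I would apply Markov/Chernoff, $P(\|S\|\ge n\epsilon)\le e^{-\lambda n\epsilon}\,\E[e^{\lambda\|S\|}]$, and optimize in $\lambda$. The optimizer is $\lambda^\star\sim \epsilon/\tau_1^2$, which must lie inside the validity window $\lambda\lesssim1/\gamma_1$; the requirement $\epsilon/\tau_1^2\lesssim 1/\gamma_1$ is exactly the stated restriction $0<\epsilon<\tau_1^2/\gamma_1$, i.e.\ the small‑deviation regime in which the Gaussian (variance) term of Bernstein's inequality is the binding one rather than the linear range term. The resulting exponent, naturally of the form $-(\,\sqrt{n}\,\epsilon/\tau_1-c\,)^2/c'$, is then tidied into the clean statement by an elementary inequality such as $\tfrac{(r-1)^2}{4}\ge \tfrac{r^2}{8}-\tfrac14$ with $r=\sqrt{n}\,\epsilon/\tau_1$, which is where the additive slack $+\tfrac14$ comes from. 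I expect the main obstacle to be precisely the interplay at the origin: justifying the uniform second‑order increment bound (handled by the $\sqrt{\,\cdot+\theta}$ regularization) and then verifying that the range term in the Bernstein MGF is genuinely dominated by the variance term throughout the admissible range of $\lambda$, which is what both forces and is encoded by the regime restriction.
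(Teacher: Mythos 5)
The paper never proves this lemma, so there is no in-paper argument to compare against: the statement is imported verbatim, citation included, as Lemma 18 of \cite{kohler2017sub}, whose proof in turn is the standard dimension-free vector Bernstein inequality going back to Gross. What you have written is, in outline, a faithful reconstruction of exactly that argument: the reduction to a scalar Chernoff bound on $\|S\|$ with $S=nz$; the estimate $\E\|S\|\le\sqrt{n}\,\tau_1$ via Jensen and independence; a sub-Gaussian MGF for $\|S\|$ around its mean with variance proxy $n\tau_1^2$, valid for $\lambda\lesssim 1/\gamma_1$; the observation that the Chernoff optimizer $\lambda^\star\sim\epsilon/\tau_1^2$ lies inside that window precisely when $\epsilon<\tau_1^2/\gamma_1$, which explains the stated regime restriction; and the final tidying $(r-1)^2/4\ge r^2/8-\tfrac14$ with $r=\sqrt{n}\,\epsilon/\tau_1$, which is indeed the exact provenance of the additive $+\tfrac14$ in the bound.

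One step in your sketch is glossed and needs a repair, though a routine one. After the second-order expansion of the smoothed norm $\Phi(g)=\sqrt{\|g\|^2+\theta}$, the linear term $\langle\nabla\Phi(S_{k-1}),x_k\rangle$ and the quadratic term $\tfrac{1}{2\sqrt{\theta}}\|x_k\|^2$ are functions of the \emph{same} increment $x_k$, so you cannot bound the conditional MGF by multiplying a Bernstein estimate for the former with a deterministic drift $\tfrac{\lambda}{2\sqrt{\theta}}\E\|x_k\|^2$ for the latter, as your plan implicitly does. Instead, center the whole increment and apply the scalar Bernstein MGF bound to $Y_k=\langle\nabla\Phi(S_{k-1}),x_k\rangle+\tfrac{1}{2\sqrt{\theta}}\big(\|x_k\|^2-\E[\|x_k\|^2\mid\mathcal{F}_{k-1}]\big)$, which is centered, bounded by $\gamma_1+\gamma_1^2/(2\sqrt{\theta})$, and has conditional variance of order $\tau_1^2$ once $\theta\asymp n\tau_1^2$; in the regime $\epsilon<\tau_1^2/\gamma_1$ these corrections are lower order, so your exponent survives. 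Be warned also that landing on the specific constant $8$, rather than some unspecified larger constant, requires carrying the sharp intermediate bound $P(\|S\|\ge\E\|S\|+t)\le\exp(-t^2/(4n\tau_1^2))$ through the computation, so the $O(\cdot)$ bookkeeping in your plan must be done with exact constants before the tidying step.
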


\begin{lemma}[Matrix Bernstein Inequality, Lemma 19 in \cite{kohler2017sub}]\label{lem:matrix}Let $A_1 , \ldots, A_n$ be independent random Hermitian matrices with common dimension d × d and assume that each one is centered, uniformly bounded and also the variance is bounded above:
\begin{equation}
\E[A_i] = 0, \quad\quad \|A_i\|_2\le \gamma_2,   \quad\quad\|\E[A_i^2]\|\le \tau_2^2.
\end{equation}
Let 
\[
Z = \frac{1}{n}\sum_{i=1}^n A_i,
\]
then we have for $0<\epsilon<2\tau_2^2/\gamma_2$,
\[
P(\|z\|\ge \epsilon)\le2d \exp\left(-n \frac{\epsilon^2}{4\tau_2^2} +\frac{1}{4}\right).
\]

\end{lemma}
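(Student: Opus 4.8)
The plan is to prove this via the matrix Laplace-transform method (Ahlswede--Winter / Tropp) applied to the unnormalized sum $S \eqdef \sum_{i=1}^n A_i$. Note $\|Z\|\ge\epsilon$ is equivalent to $\lambda_{\max}(S)\ge n\epsilon$ or $\lambda_{\max}(-S)\ge n\epsilon$, so by a union bound it suffices to bound $P(\lambda_{\max}(S)\ge n\epsilon)$ and double the result, since the hypotheses $\E[A_i]=0$, $\|A_i\|\le\gamma_2$, $\|\E[A_i^2]\|\le\tau_2^2$ are all symmetric under $A_i\mapsto -A_i$. First I would invoke the matrix Markov/Chernoff bound: for every $\theta>0$,
$P(\lambda_{\max}(S)\ge n\epsilon)\le e^{-\theta n\epsilon}\,\E\,\tr\exp(\theta S)$, using $\lambda_{\max}(\exp(\theta S))\le \tr\exp(\theta S)$ because $\exp(\theta S)\succeq 0$.

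The crux is controlling the trace moment generating function $\E\,\tr\exp(\theta S)$. Because the $A_i$ do not commute, the scalar factorization $\E\exp(\theta\sum_i A_i)=\prod_i\E\exp(\theta A_i)$ fails; instead I would use subadditivity of the matrix cumulant generating function, namely $\E\,\tr\exp(\theta S)\le \tr\exp\!\big(\sum_{i=1}^n \log\E\exp(\theta A_i)\big)$. This is the main obstacle and the one genuinely non-scalar step: it rests on Lieb's concavity theorem (applied together with independence), or alternatively on iterating the Golden--Thompson inequality in the original Ahlswede--Winter style at the cost of a looser constant. It is also what forces the leading dimensional factor $d$ in front of the exponential.

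Next I would bound each per-summand cumulant. Fix $\theta$ with $\theta\gamma_2\le 1$. Since $\E[A_i]=0$ and $\lambda_{\max}(\theta A_i)\le\theta\gamma_2\le 1$, the scalar inequality $e^x\le 1+x+x^2$ (valid for $x\le 1$) transfers through the functional calculus to give $\exp(\theta A_i)\preceq I+\theta A_i+\theta^2 A_i^2$; taking expectations and using $\E[A_i]=0$ together with $I+M\preceq\exp(M)$ for $M\succeq 0$ yields $\log\E\exp(\theta A_i)\preceq\theta^2\,\E[A_i^2]$ (the matrix logarithm being operator monotone makes this composition legitimate inside the trace-exponential). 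Summing over $i$ and using the triangle inequality for the spectral norm gives $\lambda_{\max}\big(\sum_i\E[A_i^2]\big)=\big\|\sum_i\E[A_i^2]\big\|\le \sum_i\|\E[A_i^2]\|\le n\tau_2^2$, and combining with $\tr\exp(M)\le d\exp(\lambda_{\max}(M))$ produces $P(\lambda_{\max}(S)\ge n\epsilon)\le d\exp(-\theta n\epsilon+\theta^2 n\tau_2^2)$.

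Finally I would optimize the free parameter. The choice $\theta=\epsilon/(2\tau_2^2)$ minimizes the exponent, giving $-n\epsilon^2/(4\tau_2^2)$, and the admissibility constraint $\theta\gamma_2\le 1$ reads exactly $\epsilon<2\tau_2^2/\gamma_2$, precisely the stated range. Doubling for the two-sided event then yields $P(\|Z\|\ge\epsilon)\le 2d\exp(-n\epsilon^2/(4\tau_2^2))$, which is the claim. The harmless additive $\tfrac14$ in the stated exponent is slack one can absorb by using a slightly cruder per-summand cumulant bound in the third step (e.g. the standard Bernstein form $\log\E\exp(\theta A_i)\preceq \tfrac{\theta^2/2}{1-\theta\gamma_2/3}\,\E[A_i^2]$ evaluated at a fixed, non-optimized $\theta$), so either route reproduces the dominant term $-n\epsilon^2/(4\tau_2^2)$ within the constant $e^{1/4}$.
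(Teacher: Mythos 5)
Your proof is correct, but it is worth noting that the paper itself offers no proof of this lemma at all: it is imported verbatim (with the citation) from Lemma 19 of \cite{kohler2017sub}, which in turn inherits it from an Ahlswede--Winter-style operator Chernoff derivation in the style of Gross --- that lineage is precisely where the otherwise odd additive $\tfrac14$ in the exponent and the restricted range $0<\epsilon<2\tau_2^2/\gamma_2$ come from. Your route through the Laplace-transform method with Lieb/Tropp subadditivity, $\E\,\tr\exp(\theta\sum_i A_i)\le \tr\exp\bigl(\sum_i\log\E\exp(\theta A_i)\bigr)$, is self-contained and in fact yields the strictly stronger bound $P(\|Z\|\ge\epsilon)\le 2d\exp\bigl(-n\epsilon^2/(4\tau_2^2)\bigr)$, which implies the stated inequality since $e^{1/4}>1$; the iterated Golden--Thompson alternative you mention would reproduce the sloppier constant of the original. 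The individual steps all check out: the scalar bound $e^x\le 1+x+x^2$ does hold for all $x\le 1$ (not just $|x|\le1$), so the transfer rule applies on the spectrum of $\theta A_i$ once $\theta\gamma_2\le 1$; the passage from $\E\exp(\theta A_i)\preceq \exp\bigl(\theta^2\E[A_i^2]\bigr)$ to the cumulant bound is legitimate because the matrix logarithm is operator monotone; the triangle inequality gives $\lambda_{\max}\bigl(\sum_i\E[A_i^2]\bigr)\le n\tau_2^2$; and the two-sided event is correctly handled by the union bound over $\pm S$ using $\|Z\|=\max\{\lambda_{\max}(Z),\lambda_{\max}(-Z)\}$. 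Your observation that the admissibility constraint $\theta\gamma_2\le1$ at the optimizer $\theta=\epsilon/(2\tau_2^2)$ is exactly the hypothesis $\epsilon<2\tau_2^2/\gamma_2$ is the right explanation of why the stated range is what it is --- outside that range one would clamp $\theta=1/\gamma_2$ and obtain a tail linear in $\epsilon$, which the lemma deliberately avoids by restricting its scope.
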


\section{Proof of the theorems in Section \ref{sec:cubic}}
\begin{proof}[Proof of Lemma \ref{lem:basic-f}]
First, it follows that 
\begin{eqnarray}
F(x_{t+1})&\overset{\circlenum{1}}{\le}& \tilde{f}_{\eta}(x_{t+1}; x_t) + \frac{L_3-\eta}{6}\|x_{t+1}-x_t\|^3 + \frac{1}{2}\langle(\nabla^2 f(x_t) - H_t)(x_{t+1}-x_t), x_{t+1}-x_t\rangle \nonumber\\
&&+ \langle \nabla f(x_t) - g_t, x_{t+1}-x_t\rangle \nonumber\\
&\overset{\circlenum{2}}{\le}&  \tilde{f}_{\eta}(x_{t+1}; x_t) + \frac{L_3-\eta}{6}\|x_{t+1}-x_t\|^3 + \frac{1}{2}\|\nabla^2 f(x_t) - H_t\|\|x_{t+1}-x_t\|^2 \nonumber\\
&&  
 + \langle \nabla f(x_t) - g_t, x_{t+1}-x_t\rangle \nonumber\\
&\overset{\circlenum{3}}{\le}&  \tilde{f}_{\eta}(x_{t+1}; x_t) + \frac{L_3-\eta}{6}\|x_{t+1}-x_t\|^3+     \frac{2}{3L_3^2}\|\nabla^2 f(x_t) - H_t\|^3 \nonumber\\
&&+ \frac{L_3}{6}\|x_{t+1}-x_t\|^3+ \frac{L_3}{6}\|x_{t+1}-x_t\|^{3}  + \frac{2}{3}\left(\frac{2}{L_3}\right)^{\frac{1}{2}}\|\nabla f(x_t) - g_t \|^{\frac{3}{2}}, \nonumber\\
&\le&\tilde{f}_{\eta}(x_{t+1}; x_t) + \frac{3L_3-\eta}{6}\|x_{t+1}-x_t\|^3+\frac{2}{3L_3^2}\|\nabla^2 f(x_t) - H_t\|^3 \nonumber\\
&&+ \frac{2}{3}\left(\frac{2}{L_3}\right)^{\frac{1}{2}}\|\nabla f(x_t) - g_t \|^{\frac{3}{2}}      \nonumber
 \label{eq:1}
\end{eqnarray} 
where \circlenum{1} is by Lemma \ref{lem: Lipschitz}, \circlenum{2} is by the Cauchy inequality and the definition of the spectral norm $\|\nabla^2 f(x_t)-H_t\|$
, \circlenum{3} is by using Lemma \ref{lem:inner-prod} twice.  

Second, it follows that 
\begin{eqnarray}
\tilde{f}_{\eta}(x_{t+1}; x_t) &{=}& \tilde{f}_{\eta}(x_{t+1}^*; x_t) + (\tilde{f}_{\eta}(x_{t+1}; x_t)-\tilde{f}_{\eta}(x_{t+1}^*; x_t)) \nonumber\\
&\overset{\circlenum{1}}{\le}&  \tilde{f}_{\eta}(x; x_t) + (\tilde{f}_{\eta}(x_{t+1}; x_t)-\tilde{f}_{\eta}(x_{t+1}^*; x_t)) , \label{eq:2}
\end{eqnarray}
where  \circlenum{1} is by the optimality condition of $x_{t+1}^* = \argmin_{x\in\bbR^d}\tilde{f}_{\eta}(x; x_t)$.  
 
Third, we have
\begin{eqnarray}
\tilde{f}_{\eta}(x; x_t) &\overset{\circlenum{1}}{\le}& F(x) + \frac{L_3+\eta}{6}\|x-x_t\|^3   -\frac{1}{2}\langle (\nabla^2 f(x_t)-H_t)(x-x_t), x-x_t\rangle \nonumber\\
&&- \langle \nabla f(x_t)- g_t, x-x_t\rangle \label{eq:3} \nonumber\\
&\overset{\circlenum{2}}{\le}& F(x) + \frac{L_3+\eta}{6}\|x-x_t\|^3   +\frac{1}{2}\|\nabla^2 f(x_t)-H_t\|\| x-x_t\|^2 - \langle \nabla f(x_t)- g_t, x-x_t\rangle \label{eq:3} \nonumber\\
 &\overset{\circlenum{3}}{\le}&  F(x) + \frac{L_3+\eta}{6}\|x-x_t\|^3+\frac{2}{3L_3^2}\|\nabla^2 f(x_t) - H_t\|^3 + \frac{L_3}{6}\|x-x_t\|^3 \nonumber\\
&&+ \frac{L_3}{6}\|x-x_t\|^{3}  + \frac{2}{3}\left(\frac{2}{L_3}\right)^{\frac{1}{2}}\|\nabla f(x_t) - g_t \|^{\frac{3}{2}}, \nonumber\\
 % && + \frac{3C_2D}{2(t+3)} \|x-x_t\|_2^2  +  \frac{L_3}{6}\|x_{t}-x\|^{3}  + \frac{2}{3}\left(\frac{2}{L_3}\right)^{\frac{1}{2}}\|\nabla f(x_t) - g_t \|^{\frac{3}{2}},\nonumber\\
 &\overset{\circlenum{4}}{\le}&  F(x) + \frac{3L_3+\eta}{6}\|x-x_t\|^3 + \frac{2}{3L_3^2}\|\nabla^2 f(x_t) - H_t\|^3 +  \frac{2}{3}\left(\frac{2}{L_3}\right)^{\frac{1}{2}}\|\nabla f(x_t) - g_t \|^{\frac{3}{2}}  \nonumber\\
 % &{=}&  F(x) +  \frac{2L_3+\eta_t}{6}\|x-x_t\|^3 + \frac{3C_2D}{2(t+3)} \|x-x_t\|_2^2  + \frac{2}{3}\left(\frac{2}{L_3}\right)^{\frac{1}{2}}\frac{27 C_1^{3/2}D^3}{(t+3)^3},
 \end{eqnarray} 
where \circlenum{1} is by Lemma \ref{lem: Lipschitz}, \circlenum{2} is by the Cauchy inequality and the definition of the spectral norm $\|\nabla^2 f(x_t)-H_t\|$
, \circlenum{3} is by using Lemma \ref{lem:inner-prod} twice.  

 Fourth, by \eqref{eq:1}-\eqref{eq:3} and setting $\eta= 3L_3$, it follows that
 \begin{eqnarray}
 F(x_{t+1}) &\overset{\circlenum{1}}{\le}& F(x)+L_3\|x-x_t\|^3+  \frac{4}{3L_3^2}\|\nabla^2 f(x_t) - H_t\|^3 +  \frac{4}{3}\left(\frac{2}{L_3}\right)^{\frac{1}{2}}\|\nabla f(x_t) - g_t \|^{\frac{3}{2}}\nonumber\\
 &&+(\tilde{f}_{\eta}(x_{t+1}; x_t)-\tilde{f}_{\eta}(x_{t+1}^*; x_t))\nonumber\\
 &\overset{\circlenum{2}}{=}&  F(x)+L_3\|x-x_t\|^3+  E_t,\nonumber
 \end{eqnarray}
where \circlenum{1} is by summing  \eqref{eq:1}-\eqref{eq:3} and setting $\eta= 3L_3$, \circlenum{2} is by the definition of $$E_t\overset{\defi}{=} \frac{4}{3L_3^2}\|\nabla^2 f(x_t) - H_t\|^3 +  \frac{4}{3}\left(\frac{2}{L_3}\right)^{\frac{1}{2}}\|\nabla f(x_t) - g_t \|^{\frac{3}{2}}
 +(\tilde{f}_{\eta}(x_{t+1}; x_t)-\tilde{f}_{\eta}(x_{t+1}^*; x_t)).$$

Lemma \ref{lem:basic-f} is proved.

\end{proof}

\begin{proof}[Proof of Theorem \ref{thm:nonstrong}]
First, let $x\overset{\defi}{=} x_t+\alpha_t(x^*-x_t) (0\le \alpha_t\le 1)$. Then by the convexity of $F(x)$, we have
\begin{eqnarray}
F(x_{t+1})&\le&(1-\alpha_t)F(x_t) + \alpha_t F(x^*)+ L_3 \alpha_t^3\|x^*-x_t\|^3 + E_t\nonumber
\end{eqnarray}
Second, by setting $\alpha_t = \frac{3}{t+3} (t\ge 0)$ and using Assumption \ref{ass:x} that $\|x_t-x^*\|\le D$, then we have 
\begin{eqnarray}
F(x_{t+1}) - F(x^*)\le (1-\alpha_t)(F(x_t) - F(x^*))+\frac{27L_3D^3}{(t+3)^2} + E_t.\label{eq:non-1}
\end{eqnarray}
Third, set 
$$A_{t} \overset{\defi}{=} 
\begin{cases}
1, & t = 0 \\
\Pi_{i=1}^t(1-\alpha_i), & t\ge 1
\end{cases}$$  and $\forall t\ge 1$, dividing $A_{t}$ on both sides of \eqref{eq:non-1}, it follows that
\begin{eqnarray}
F(x_1) - F(x^*)&\le& L_3D^3+E_0, \nonumber\\
\frac{F(x_{t+1})-F(x^*)}{A_{t}}&\le&\frac{F(x_{t})-F(x^*)}{A_{t-1}}+\frac{27L_3D^3	}{(t+3)^3A_{t}}+\frac{E_t}{A_t}.\label{eq:non-2}
\end{eqnarray}
By the definition of $A_{t}$, $\forall t\ge 0$, we have $A_{t} = \frac{6}{(t+3)(t+2)(t+1)}$. 
Summing up both sides of \eqref{eq:non-2} from $i=1$ to $t$, we have
\begin{eqnarray}
\frac{F(x_{t+1})-F(x^*)}{A_{t}}&\le& \frac{F(x_{1})-F(x^*)}{A_{0}} + \sum_{i=1}^{t}  \frac{27L_3D^3	}{(i+3)^3A_{i}} +  \sum_{i=1}^{t} \frac{E_i}{A_i}\nonumber\\
&\le& L_3D^3+E_0+ \frac{9}{2}L_3D^3\sum_{i=1}^t \frac{(i+2)(i+1)}{(i+3)^2} +  \sum_{i=1}^{t} \frac{E_i}{A_i}\nonumber\\
&\le& L_3D^3+  \frac{9}{2}L_3D^3 \sum_{i=1}^t \left(1 - \frac{1}{i+2}\right) +  \sum_{i=0}^{t} \frac{E_i}{A_i}\nonumber\\
&\le&  \frac{9}{2}tL_3D^3 +  \sum_{i=0}^{t} \frac{E_i}{A_i}.\nonumber
\end{eqnarray}
Therefore we have 
\begin{eqnarray}
F(x_{t+1}) - F(x^*)\le\frac{27L_3D^3}{(t+3)(t+2)} + \frac{1}{(t+1)(t+2)(t+3)}  \sum_{i=0}^{t} ((i+1)(i+2)(i+3))E_i.\nonumber
\end{eqnarray}
Theorem \ref{thm:nonstrong} is proved.
\end{proof}

\vgap

\begin{proof}[Proof of Theorem \ref{thm:strong}]
First, let $x\overset{\defi}{=} x_t+\alpha_t(x^*-x_t) (0\le \alpha_t\le 1)$. Then by the strong convexity of $F(x)$, we have
\begin{eqnarray}
F(x_{t+1})&\le&(1-\alpha_t)F(x_t) + \alpha F(x^*) -\frac{\sigma_2\alpha_t(1-\alpha_t)}{2}\|x^*-x_t\|^2  + L_3 \alpha_t^3\|x^*-x_t\|^3 + E_t. \nonumber
\end{eqnarray}
Then  we have 
\begin{eqnarray}
F(x_{t+1}) - F(x^*)&\le& (1-\alpha_t)(F(x_t)-F(x^*))- \frac{\alpha_t}{2}\|x_t-x^*\|^2(\sigma_2(1-\alpha_t) - 2L_3\alpha_t^2\|x_t-x^*\|) + E_t\nonumber\\
&\overset{\circlenum{1}}{\le}& (1-\alpha_t)(F(x_t)-F(x^*))- \frac{\alpha_t}{2}\|x_t-x^*\|^2(\sigma_2(1-\alpha_t) - 2L_3\alpha_t^2D) + E_t,\nonumber
\end{eqnarray}
where \circlenum{1} is by the assumption that $\forall t\ge 0$, $\|x_t-x^*\|\le D$.

Then $\forall t\ge0$, let $\alpha_t \overset{\rm def}{=} \min\left\{\frac{1}{3}, \sqrt{\frac{\sigma_2}{3L_3D}}\right\}$. Then we have 
\begin{eqnarray}
\sigma(1-\alpha_t) - 2L_3\alpha_t^2\|x_t-x^*\|\ge 0.\nonumber
\end{eqnarray}
Thus, one has
\begin{eqnarray}
F(x_{t+1}) - F(x^*)\le (1-\alpha_t)(F(x_t)-F(x^*)) + E_t.\nonumber
\end{eqnarray}
For $t_0\ge t\ge -1$, let
\begin{equation}
A_t \overset{\rm def}{=} 
\begin{cases}
1, & t = -1\\
\Pi_{i=0}^t (1-\alpha_i), & t\ge 0.	
\end{cases}	\nonumber
\end{equation}

 It follows that for $ t_0-1\ge t\ge 0$,
\begin{eqnarray}
\frac{F(x_{t+1}) - F(x^*)}{A_t}\le \frac{F(x_{t}) - F(x^*)}{A_{t-1}} + \frac{E_t}{A_t}. \nonumber
\end{eqnarray}
Then we have 
\begin{eqnarray}
F(x_{t+1}) - F(x^*)\le A_{t}\left(F(x_{0}) - F(x^*)\right) + A_t \sum_{i=0}^{t}\frac{E_i}{A_i}.\nonumber
\end{eqnarray}
Then by the definition of $A_t$, Theorem \ref{thm:strong} is proved.
\end{proof}

\begin{proof}[Proof of Theorem \ref{thm:superlinear}]

In Lemma \ref{lem:basic-f}, by setting $\alpha_{t-1}=1$ and using the $\sigma_2$-strongly convex property of $F(x)$, then we have 
\begin{eqnarray}
F(x_{t})-F(x^*)&\le& L_3\|x_{t-1}-x^*\|^3+E_{t-1}\nonumber\\ 
 &\le& L_3\left(\frac{2}{\sigma_2}(F(x_{t-1}) - F(x^*))\right)^{3/2} + E_{t-1}
\label{eq:strong-2}
\end{eqnarray}

By the definition of $\omega$, we can rearrange \eqref{eq:strong-2} to
\begin{eqnarray}
\frac{F(x_{t}) - F(x^*) }{\omega}\le \frac{1}{2}\left(\frac{F(x_{t-1}) - F(x^*) }{\omega}\right)^{3/2} + \frac{E_{t-1}}{\omega}.\label{eq:strong-3}
\end{eqnarray}

Now we use the mathematical induction method to show that under the assumption $E_t\le\frac{\omega}{2}\left(2/3\right)^{(3/2)^{t-t_0+1}}$, for $t\ge t_0$, 
\begin{equation}
F(x_{t}) - F(x^*)\le \omega\left(2/3\right)^{(3/2)^{t-t_0}}.\label{eq:strong10}
\end{equation}

First, by the definition of $t_0$, we have $F(x_{t}) - F(x^*)\le \frac{2}{3}\omega$. Therefore \eqref{eq:strong10} is true trivially for $t=t_0$. 

Second, assume that for some $t>t_0$ such that \eqref{eq:strong10} is true. Then by the assumption $E_t\le\frac{\omega}{2}\left(2/3\right)^{(3/2)^{t-t_0+1}}$, we have 
\begin{eqnarray}
\frac{F(x_{t+1})-F(x^*)}{\omega}&\overset{\circlenum{1}}{\le}&  \frac{1}{2}\left(\frac{F(x_{t}) - F(x^*) }{\omega}\right)^{3/2} + \frac{E_t}{\omega}\nonumber\\
&\overset{\circlenum{2}}{\le}&(2/3)^{(3/2)^{t-t_0+1}},\nonumber
\end{eqnarray}
where \circlenum{1} is by \eqref{eq:strong-3}, \circlenum{2} is by the induction assumption and the assumption of $E_t$.

Theorem \ref{thm:superlinear} is proved.

\end{proof}

\begin{proof}[Proof of Corollary \ref{thm:online}]
\begin{itemize}
\item If $F(x)$ is only convex,  
in Theorem \ref{thm:nonstrong}, for $1\le i\le t$, if we set $E_i \le \frac{27L_3D^3}{i(i+1)(i+2)}$, then we have
\begin{equation}
F(x_t) - F(x^*)\le \frac{54L_3D^3}{(t+1)(t+2)}. \label{eq:online1}
\end{equation}
Then by the definition of $E_i$ in Lemma \ref{lem:basic-f}, we can verify that $1\le i\le t$, the following setting 
\begin{eqnarray}
\|\nabla^2 f(x_i) - H_i\|&\le& \frac{3L_3 D}{2(i+2)},\label{eq:online2}\\
\|\nabla f(x_i) - g_i\| &\le& \frac{9L_3 D^2}{8(i+2)^2}, \label{eq:online3}\\
\tilde{f}_{\eta_t}(x_{i+1}; x_i) -\tilde{f}_{\eta_t}(x_{i+1}^*; x_i)&\le& \frac{81L_3D^3}{4(i+2)^3},\label{eq:online4}
\end{eqnarray}
can make $E_i\le \frac{27L_3D^3}{i(i+1)(i+2)}$.

In the online stochastic setting, in order to let \eqref{eq:online1} hold with probability $1-\delta$,  the per-iteration failure probability should be set as
\begin{equation}
1-(1-\delta)^{\frac{1}{t}}\in \O\left(\frac{\delta}{t}\right). 
\end{equation}

By Assumptions \ref{ass:gradient-Hessian}, \ref{ass:g-H}, and Lemma \ref{lem:vector}, in order to let \ref{eq:online2} hold with probability $1-\frac{\delta}{t}$, it should be 
\begin{equation}
P\left(\|\nabla f(x_i) - g_i\|\ge\frac{9L_3 D^2}{8(i+2)^2}\right)\le \exp\left(-\hat{n}_{i1}\frac{1}{8\tau_1^2}\left(\frac{9L_3 D^2}{8(i+2)^2}\right)^2+\frac{1}{4}\right)\le\frac{\delta}{t}.\label{eq:online5}
\end{equation}
We can verify that 
\begin{equation}
\hat{n}_{i1}\ge \frac{(2+8\log \frac{t}{\delta})\tau_1^2 (i+2)^4}{L_3^2 D^4}
\end{equation}
can let \ref{eq:online5} hold. 
Then the total cost of stochastic gradient evaluations is $$\sum_{i=1}^t\hat{n}_{i1} = \O\left(
\frac{(2+8\log \frac{t}{\delta})\tau_1^2 (t+2)^5}{L_3^2 D^4}
\right).$$

Similarly, by Assumptions \ref{ass:gradient-Hessian}, \ref{ass:g-H}, and Lemma \ref{lem:matrix}, in order to let \ref{eq:online1} hold with probability $1-\frac{\delta}{t}$, it should be 
\begin{equation}
P\left(\|\nabla^2 f(x_i) - H_i\|\ge  \frac{3L_3 D}{2(i+2)}\right)\le2d\cdot \exp\left(-\hat{n}_{i2} \frac{1}{4\tau_2^2}\left( \frac{3L_3 D}{2(i+2)}\right)^2 +\frac{1}{4}\right)\le \frac{\delta}{t}.\label{eq:online6}
\end{equation}

We can verify that 
\begin{equation}
\hat{n}_{i2}\ge \frac{4(1+4\log \frac{2dt}{\delta})\tau_2^2 (i+2)^2}{9L_3^2 D^2}. 
\end{equation}
can let \ref{eq:online6} hold. 
Then the total cost of stochastic gradient evaluations is $$\sum_{i=1}^t\hat{n}_{i1} = \O\left(
\frac{4(1+4\log \frac{2dt}{\delta})\tau_2^2 (t+2)^3}{9L_3^2 D^2}
\right).$$

Because $\forall v\in\bbR^d$, $H_i v$ need $\hat{n}_{i2}$ stochastic Hessian-vector products, by Assumption \ref{ass:appro}, we need at most $\O(\hat{n}_{i2}\log (t+2))$  stochastic Hessian-vector products to find a solution $x_{t+1}$ such that \eqref{eq:online4} holds.

Define $\epsilon = \frac{54L_3D^3}{(t+1)(t+2)}$, then in order to let $F(x_t)-F(x^*)\le\epsilon$ with probability $1-\delta$, we need 
\begin{eqnarray*}
&&\O\left(
{\left(1+\log \frac{L_3^{1}D^{3}}{\delta\epsilon}\right)\tau_1^2 L_3^{1/2}D^{7/2} \epsilon^{-5/2}}
\right) \text{ stochastic gradient evaluations},\\
&&\O\left(
\left(1+\log \frac{dL_3D^3}{\delta\epsilon}\right)\tau_2^2 L_3^{-1/2}D^{5/2}\epsilon^{-3/2}
\right) \text{ stochastic Hessian-vector products},\\
&&\O\left(
\left(1+\log \frac{dL_3D^3}{\delta\epsilon}\right)\tau_2^2 L_3^{-1/2}D^{5/2}\epsilon^{-3/2}\log\frac{1}{\epsilon}
\right)\text{ stochastic Hessian-vector products  }
\end{eqnarray*}
of calling subsolver.

Therefore the dominant operation is stochastic gradient evaluation. By Assumption \ref{ass:g-H}, the cost of stochastic Hessian-vector product is not higher than stochastic gradient evaluation.
 Therefore, we need   $\tO(\epsilon^{-5/2})$ equivalent stochastic gradient evaluations to obtain an $\epsilon$-accurate solution.

\item If $F(x)$ is $\sigma_2$-strongly convex, in Theorem \ref{thm:strong}, for $1\le i\le t$, we use the following setting, we need 
\begin{equation}
 E_i \le \frac{(F(x_0)-F(x^*))(1-\alpha)^i}{t}, \\ 
\end{equation}
then by the definition of $E_i$ and using a similar analysis to the analysis in the convex setting, we need  $\tO(\sigma^{-5/6}\epsilon^{-4/3})$ equivalent stochastic gradient evaluations to obtain an $\epsilon$-accurate solution.

\end{itemize}

 % Since the approximation is stochastic, we only allow a small probability δ of failure for the Hessian approximation across all iterations. In other words, to get an overall and accumulative success probability of 1 − δ for the entire T iterations, the per-iteration failure probability should be set as

\end{proof}

\begin{proof}[Proof of Corollaries \ref{thm:finite}]
In the finite-sum setting, the analysis is similar to the online stochastic setting. The main difference is that the samples of gradient and Hessian can be larger than the number $n$ of the terms in the sum. Therefore, after the number of the samples evaluated by Lemmas \ref{lem:vector}  and \ref{lem:matrix} is larger than $n$, then we use the exact gradient in the following iteration. By this setting, we can know that in the convex setting, we only need $\min\{\tilde{\mathcal{O}}(\epsilon^{-5/2}),\tilde{\mathcal{O}}({n\epsilon^{-1/2}})\} $ in the convex setting and $\min\left\{\tilde{\mathcal{O}}(\sigma_2^{-5/6}\epsilon^{-4/3}), \tilde{\mathcal{O}}(\sigma_2^{-1/2}n)\right\} $ in the strongly convex setting.
\end{proof}

\section{Proof of the theorems in Section \ref{sec:acc-cubic}}

\begin{lemma}\label{lem:111}
For all $t\ge0$ and $x\in\bbR^d$, we have
\begin{eqnarray}
\E[\psi_t(x)]\le A_t F(x) + \frac{C_1}{2}\|x-x_0\|^2+\frac{C_2}{3}\|x-x_0\|^3,\label{eq:acc1}
\end{eqnarray}
where the expectation is taken on all the randomness from the $0$-th to $t$-th iteration.
\end{lemma}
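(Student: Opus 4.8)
The plan is to prove \eqref{eq:acc1} by induction on $t$, using the recursive definition of $\psi_{t+1}$ in \eqref{eq:psi}, the unbiasedness of $g_{t+1}'$ from Assumption \ref{ass:g-prime}, and the convexity of $f$. For the base case $t=0$, both choices of the sequence $\{A_t\}$ used in the paper satisfy $A_0=0$, so the claimed right-hand side collapses to $\frac{C_1}{2}\|x-x_0\|^2+\frac{C_2}{3}\|x-x_0\|^3$, which is exactly $\psi_0(x)$ as defined in Algorithm \ref{alg:AICNM}. Since $\psi_0$ is deterministic, \eqref{eq:acc1} holds with equality at $t=0$.

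For the inductive step, suppose \eqref{eq:acc1} holds at index $t$. Taking expectations in \eqref{eq:psi} and using linearity gives
\begin{equation}
\E[\psi_{t+1}(x)] = \E[\psi_t(x)] + a_t\,\E\big[f(x_{t+1}) + \langle g_{t+1}', x - x_{t+1}\rangle + h(x)\big]. \nonumber
\end{equation}
I would then replace the stochastic gradient by the true gradient inside the expectation: conditioning on the filtration $\mathcal{F}_t$ generated by all randomness up to and including the computation of $x_{t+1}$, the point $x_{t+1}$ is $\mathcal{F}_t$-measurable and $x$ is fixed, so Assumption \ref{ass:g-prime} yields $\E[g_{t+1}'\mid\mathcal{F}_t]=\nabla f(x_{t+1})$, and the tower property gives $\E[\langle g_{t+1}', x - x_{t+1}\rangle]=\E[\langle \nabla f(x_{t+1}), x - x_{t+1}\rangle]$. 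Convexity of $f$ then supplies the pointwise bound $f(x_{t+1})+\langle \nabla f(x_{t+1}), x - x_{t+1}\rangle\le f(x)$, so adding $h(x)$ makes the bracketed integrand at most $F(x)=f(x)+h(x)$. Hence $\E[\psi_{t+1}(x)]\le \E[\psi_t(x)] + a_t F(x)$.

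To close the induction I would apply the hypothesis to $\E[\psi_t(x)]$ and use $a_t=A_{t+1}-A_t$, so that $A_t F(x)+a_t F(x)=A_{t+1}F(x)$ while the quadratic and cubic terms carry through unchanged, giving precisely \eqref{eq:acc1} at $t+1$. The main obstacle is purely the bookkeeping of the nested conditional expectations: one must be careful that $x_{t+1}$ is already determined (hence can be treated as a constant) at the moment $g_{t+1}'$ is drawn, so that the unbiasedness applies cleanly to the inner product; everything else is a one-line convexity estimate and a telescoping of the coefficients $a_t$.
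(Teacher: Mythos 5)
Your proposal is correct and follows essentially the same route as the paper's own proof: induction on $t$ with the base case $A_0=0$ giving equality, then conditioning on the history up to the draw of $g_{t+1}'$ so that unbiasedness converts the linear term to $\langle \nabla f(x_{t+1}), x-x_{t+1}\rangle$, followed by convexity of $f$ and the telescoping $A_{t+1}=A_t+a_t$. No gaps.
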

\begin{proof}
Since $A_0=0$, $\forall x\in\bbR^d$, we have
\begin{eqnarray}
 \psi_0(x) = \frac{C_1}{2}\|x-x_0\|^2+ \frac{C_2}{3}\|x-x_0\|^3. 
\end{eqnarray}
Thus, \eqref{eq:acc1} is true for $t=0$. Then assume that \eqref{eq:acc1} is true for some $t\ge 0$. Then, fix the randomness before the randomness in obtaining $g_{t+1}^{\prime}$, we have 
\begin{eqnarray*}
\E[\psi_{t+1}(x)]&\overset{\circlenum{1}}{=}&\E[\psi_t(x)+a_t\big(f(x_{t+1})+\langle g_{t+1}^{\prime}, x-x_{t+1} \rangle+h(x)\big)\\
&\overset{\circlenum{2}}{=}&\psi_t(x)+a_t\big(f(x_{t+1})+\langle \nabla f(x_{t+1}), x-x_{t+1} \rangle+h(x)\big)\\
&\overset{\circlenum{3}}{\le}&\psi_t(x)+a_t\big(f(x)+h(x)\big),\\
&\overset{\circlenum{4}}{=}&\psi_t(x)+a_t F(x),
\end{eqnarray*}
where \circlenum{1} is by \eqref{eq:acc1}, \circlenum{2} is by the unbiasness of $g^{\prime}_{t+1}$ that $\E[g^{\prime}_{t+1}] =\nabla f(x_{t+1})$, \circlenum{3} is by the convexity of $f(x)$,  \circlenum{3} is by $F(x)=f(x)+h(x)$.

Then taking expectation on all the history, we have
\begin{eqnarray*}
\E[\psi_{t+1}(x)]&\le&\E[\psi_t(x)+a_t F(x)]\\
&\overset{\circlenum{1}}{\le}&A_t F(x) +  \frac{C_1}{2}\|x-x_0\|^2+\frac{C_2}{3}\|x-x_0\|^3+a_tF(x)\\
&\overset{\circlenum{2}}{\le}&A_{t+1} F(x) +  \frac{C_1}{2}\|x-x_0\|^2+\frac{C_2}{3}\|x-x_0\|^3,
\end{eqnarray*}
where \circlenum{1} is by the induction assumption, \circlenum{2} is by $A_{t+1}=A_t+a_t$.

\end{proof}

\begin{proof}[Proof of Lemma \ref{eq:acc-key}]
For simplicity, we drop the subscripts in this proof. 
Hence we can write the 
step 7 of \ref{alg:AICNM} as $x$ is an approximate solution of 
\begin{eqnarray}
\min_{z\in\bbR^d} \tilde{f}_{\eta}(z; y).
\end{eqnarray}
 
By the definition of $\tilde{f}(x; y)$, for any $g_{h}(x)\in \partial g(x)$, we have 
\begin{eqnarray}
\nabla \tilde{f}(x; y)=g+H(x-y)+\frac{\eta}{2}\|x-y\|(x-y)+g_{h}(x).
\end{eqnarray}
Then by the definition  $q \overset{\defi}{=} g-\nabla f(y)+\nabla F(x) - \nabla \tilde{f}_{\eta}(x; y), r \overset{\defi}{=}\|x-y\|$, then we have
\begin{eqnarray}
\|-q\|&=&\|-g+\nabla f(y)-\nabla F(x) + \nabla \tilde{f}_{\eta}(x; y)\| \nonumber\\
&=&\|\nabla f(y) - \nabla f(x) + H(x-y)+ \frac{\eta}{2}\|x-y\|(x-y)\|		\nonumber\\
&\le& \|\nabla f(y) - \nabla f(x) + \nabla^2 f(y)(x-y)\|+\|(H-\nabla^2 f(y))(x-y)\| +  \frac{\eta}{2}\|x-y\|^2 	\nonumber\\
&\le&(L_3+0.5\eta)r^2+\mu r,	\nonumber
\end{eqnarray}
which consequently implies that
\begin{eqnarray}
r\ge \frac{2\|q\|}{\mu+\sqrt{\mu^2+(4L_3+2\eta)\|q\|}}, \label{eq:acc-r}
\end{eqnarray}
due to nonnegativity of r. Then we have
\begin{eqnarray*}
(L_3 r^2 + 0.5 \mu r)^2 &\overset{\circlenum{1}}{\geq} &
(\| \nabla f(x) - \nabla f(y) - \nabla^2 f(y) (x-y)\| + \|(H-\mu I - \nabla^2 f(y)) (x-y)\|)^2\\
&\overset{\circlenum{2}}{\geq}& \| \nabla f(x) - \nabla f(y) - (H-\mu I) (x-y)\|^2\\
&\overset{\circlenum{3}}{\geq}& \|q  + (\mu + \frac{\eta}{2} \|x-y\|) (x-y) \|^2 \\
&{=}&
\|q\|^2 + (\mu + \frac{\eta}{2} r)^2 r^2 + (2\mu + \eta r)q^\top
(x-y),
\end{eqnarray*}
where \circlenum{1} is by the Lipchitz Hessian property and Assumption \ref{ass:H}, \circlenum{2} is by the triangle inequality, \circlenum{1} is by the definition of $q$.

Therefore, by the setting $\eta\ge4L_3$ and re-arranging the terms in the above relation,  
we have
\begin{eqnarray}
q^T (y-x)&\ge& \frac{1}{2\mu+\eta r}\left(\|q\|^2 + 0.75 (\mu + \frac{\eta}{2}r)^2r^2\right) \nonumber\\
&=&\frac{1}{2\mu+\eta r}\left(\|q\|^2 + 0.25 (\mu + \frac{\eta}{2}r)^2r^2\right)+\frac{2\mu + \eta r^2}{8}r^2 \nonumber\\
&\ge&0.5\|q\|r + \frac{\mu}{4}r^2 \nonumber\\
&\ge&\begin{cases}
\frac{\|q\|^2}{3\mu}+ \frac{\mu}{4}r^2, & \text{ if }  3\mu^2\ge ( 4L_3+2\eta)\|q\| \\
\sqrt{\frac{\|q\|^3}{4L_3+2\eta}} + \frac{\mu}{4}r^2,  & { \rm otherwise }
\end{cases}. \nonumber
\end{eqnarray}

Then Lemma \ref{eq:acc-key} is proved.
\end{proof}
\begin{lemma}\label{lem:recur}
Assume the constant $C_1>0, C_2>0$ in Alg. \ref{alg:AICNM} satisfies for $t\ge0$, 
\begin{eqnarray*}
C_1&\ge&\max_{t\ge0}\left\{\frac{9\mu_ta_t^2}{2A_{t+1}}-\frac{2}{3}A_t\sigma_2\right\}\\
C_2 &\ge&\max_{t\ge0}\left\{ \frac{32a_t^3L_3}{3A_{t+1}^2}-\frac{A_t\sigma_2}{R}\right\}.
\end{eqnarray*} 
For a sequence $\{G_t\}$, set $G_0=0$ and for $i\ge 1$,
\begin{eqnarray}
G_i &=& \left(\frac{A_{i+1}}{\mu_i}+\frac{9a_i^2}{2(3C_1+2A_i\sigma)}\right) \|g_i-\nabla f(y_i) - \nabla \tilde{f}_{\eta}(x_{i+1}; y_i)\|^2\nonumber\\
 % && -\frac{C_3}{2}\|v_{t+1}-v_t\|^2
 % \\
 &&+  \frac{9a_i^2}{2(3C_1+2A_i\sigma)}\| g_{i+1}^{\prime}- \nabla f(x_{i+1})\|^2,
\end{eqnarray}

Then if sequences $\{x_t\}$, $\{v_t\}$ are generated by Algorithm \ref{alg:AICNM}, then for all $t>0$, we have
\begin{eqnarray}
\E[A_t F(x_t)]\le \E\Big[\psi_t(v_t)+\sum_{i=0}^t G_i \Big]. \label{eq:acc-recur1}
\end{eqnarray}
\end{lemma}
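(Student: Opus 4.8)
The plan is to prove \eqref{eq:acc-recur1} by induction on $t$, extending the estimating-sequence argument of \cite{nunes2018accelerated} while carrying along the three sources of inexactness (the Hessian error controlled by $\mu_t$, the inexact subproblem gradient $g_t-\nabla f(y_t)-\nabla \tilde{f}_{\eta}(x_{t+1};y_t)$, and the stochastic gradient error $g_{t+1}^{\prime}-\nabla f(x_{t+1})$). For the base case $t=0$ recall $A_0=0$, $v_0=x_0$, and $G_0=0$, so both sides collapse to $0=\psi_0(x_0)$ and the claim holds trivially.

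For the inductive step, assume $\E[A_tF(x_t)]\le\E[\psi_t(v_t)+\sum_{i=0}^tG_i]$. First I would note that $\psi_t$ is $(C_1+A_t\sigma_2)$-strongly convex (the modulus $C_1$ from $\tfrac{C_1}{2}\|\cdot-x_0\|^2$, and $A_t\sigma_2=(\sum_{i<t}a_i)\sigma_2$ from the accumulated $\sigma_2$-strongly convex $h$ terms, $f$ being merely convex) and, through $\tfrac{C_2}{3}\|\cdot-x_0\|^3$, also $\tfrac{C_2}{2}$-uniformly convex of degree $3$. Since $v_t=\argmin\psi_t$, these give for every $x$
\[
\psi_t(x)\ge \psi_t(v_t)+\tfrac{C_1+A_t\sigma_2}{2}\|x-v_t\|^2+\tfrac{C_2}{6}\|x-v_t\|^3.
\]
Plugging this into the recursion $\psi_{t+1}(x)=\psi_t(x)+a_t(f(x_{t+1})+\langle g_{t+1}^{\prime},x-x_{t+1}\rangle+h(x))$ and then \emph{minimising the resulting lower bound over $x$}, rather than evaluating at the random point $v_{t+1}$, yields a lower bound on $\psi_{t+1}(v_{t+1})=\min_x\psi_{t+1}(x)$. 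This minimise-over-$x$ step is the crucial device: it keeps the large gradient $\nabla F(x_{t+1})$ out of an unabsorbable term of the form $a_t\langle\nabla F(x_{t+1}),v_{t+1}-v_t\rangle$, replacing it by a controlled penalty.

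Inside this lower bound I would bound $h(x)$ below by its linearisation at $x_{t+1}$, invoke the inductive hypothesis to replace $\psi_t(v_t)$ by $A_tF(x_t)-\sum_{i\le t}G_i$, and use convexity of $F$ at $x_{t+1}$ with the momentum identity $A_{t+1}y_t=A_tx_t+a_tv_t$ to collect the $x$-independent terms into $A_{t+1}F(x_{t+1})+A_{t+1}\langle\nabla F(x_{t+1}),y_t-x_{t+1}\rangle$ plus a residual $a_t\langle g_{t+1}^{\prime}-\nabla f(x_{t+1}),v_t-x_{t+1}\rangle$. Minimising the linear-in-$(x-v_t)$ part against the quadratic and cubic moduli produces two penalties, of order $\tfrac{a_t^2}{C_1+A_t\sigma_2}\|g_{t+1}^{\prime}+h^{\prime}(x_{t+1})\|^2$ and a cubic analogue in $\|g_{t+1}^{\prime}+h^{\prime}(x_{t+1})\|^{3/2}$ governed by $C_2$. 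I would then write $\nabla F(x_{t+1})=q_{t+1}-(g_t-\nabla f(y_t)-\nabla\tilde{f}_{\eta}(x_{t+1};y_t))$, apply Lemma \ref{eq:acc-key} to lower-bound $\langle q_{t+1},y_t-x_{t+1}\rangle$ by $\min\{\|q_{t+1}\|^2/3\mu_t,\ \sqrt{\|q_{t+1}\|^3/(4L_3+2\eta)}\}+\tfrac{\mu_t}{4}\|y_t-x_{t+1}\|^2$, and use Young's inequality (Lemma \ref{lem:inner-prod} with $p=2$) to absorb $\langle g_t-\nabla f(y_t)-\nabla\tilde{f}_{\eta}(x_{t+1};y_t),\,y_t-x_{t+1}\rangle$ against the $\tfrac{\mu_t}{4}\|y_t-x_{t+1}\|^2$ term; this is exactly what generates the $\tfrac{A_{t+1}}{\mu_t}\|g_i-\nabla f(y_i)-\nabla\tilde{f}_{\eta}(x_{i+1};y_i)\|^2$ contribution to $G_{t+1}$.

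Finally I would take the conditional expectation over $g_{t+1}^{\prime}$: since $v_t$ and $x_{t+1}$ are fixed before $g_{t+1}^{\prime}$ is drawn, Assumption \ref{ass:g-prime} makes the residual cross term $a_t\langle g_{t+1}^{\prime}-\nabla f(x_{t+1}),v_t-x_{t+1}\rangle$ vanish, while $\E\|g_{t+1}^{\prime}+h^{\prime}(x_{t+1})\|^2=\|\nabla F(x_{t+1})\|^2+\E\|g_{t+1}^{\prime}-\nabla f(x_{t+1})\|^2$ splits the quadratic penalty into a $\|\nabla F(x_{t+1})\|^2$ piece and the variance piece that becomes the second term of $G_{t+1}$; the outer expectation then telescopes the $\sum_iG_i$. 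The main obstacle is the constant bookkeeping of these last two paragraphs: one must verify that the two branches of the estimating-sequence penalty (the $\|\nabla F\|^2$ term weighted by $\sim a_t^2/(C_1+A_t\sigma_2)$ and the $\|\nabla F\|^{3/2}$ term governed by $C_2$) are dominated respectively by the two branches $A_{t+1}\|q_{t+1}\|^2/3\mu_t$ and $A_{t+1}\sqrt{\|q_{t+1}\|^3/(4L_3+2\eta)}$ of Lemma \ref{eq:acc-key}. This domination is precisely what the hypotheses $C_1\ge\frac{9\mu_ta_t^2}{2A_{t+1}}-\frac{2}{3}A_t\sigma_2$ and $C_2\ge\frac{32a_t^3L_3}{3A_{t+1}^2}-\frac{A_t\sigma_2}{R}$ are engineered to guarantee, and the factor mismatches (the $9$, the combination $3C_1+2A_i\sigma_2$, and the split of $\sigma_2$ between the quadratic and cubic moduli) all arise from the triangle inequalities relating $q_{t+1}$ to $\nabla F(x_{t+1})$ and must be tracked with care.
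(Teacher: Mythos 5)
Your outline follows the paper's strategy in all but one step: the induction with base case $A_0=0$, the $(C_1+A_t\sigma_2)$-strong and $\tfrac{C_2}{2}$-uniform convexity of $\psi_t$ around its minimizer $v_t$, the decomposition of $\langle g_{t+1}',\cdot\rangle$ so that the cross term $a_t\langle g_{t+1}'-\nabla f(x_{t+1}),v_t-x_{t+1}\rangle$ vanishes under Assumption~\ref{ass:g-prime}, the momentum identity $A_{t+1}y_t=A_tx_t+a_tv_t$, and the matching of penalties against the two branches of Lemma~\ref{eq:acc-key} are all exactly what the paper does. The one place you deviate is the ``minimise the lower bound over $x$'' device. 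The paper does the opposite of what you call unabsorbable: it evaluates the lower bound at $x=v_{t+1}$, keeps $a_t\langle q_{t+1},v_{t+1}-v_t\rangle$, and absorbs it with two applications of Lemma~\ref{lem:inner-prod} (with $p=2$ and $p=3$) against the moduli $\tfrac{C_1+A_t\sigma_2}{2}\|v_{t+1}-v_t\|^2+\tfrac{C_2}{6}\|v_{t+1}-v_t\|^3$, after first rewriting $\nabla F(x_{t+1})=q_{t+1}-(g_t-\nabla f(y_t)-\nabla\tilde f_{\eta}(x_{t+1};y_t))$ so that the errors land directly in the stated $G_i$; algebraically this is the same Young-type minimization you propose, so the two routes are nearly equivalent.

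The deviation does create one concrete gap. The hypothesis on $C_2$ carries the slack $-\tfrac{A_t\sigma_2}{R}$, which the paper obtains by using $\|v_{t+1}-v_t\|\le R$ (assumed in Theorem~\ref{thm:prime-result}) to convert part of the quadratic strong-convexity contribution $\tfrac{A_t\sigma_2}{6}\|v_{t+1}-v_t\|^2$ into a reinforcement $\tfrac{A_t\sigma_2}{6R}\|v_{t+1}-v_t\|^3$ of the cubic modulus. In your minimise-over-$x$ formulation the variable ranges over all of $\bbR^d$, so you have no bound $\|x-v_t\|\le R$ at the point where the cubic Young step is performed, and your cubic modulus is only $C_2/6$; you would therefore need $C_2\ge\max_t\tfrac{32a_t^3L_3}{3A_{t+1}^2}$ without the slack. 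That maximum is infinite for the geometric choice $A_t=(1+\rho)^t$ used in Theorem~\ref{thm:acc-strong-convex}, so your argument would not prove the lemma under its stated hypothesis in the strongly convex regime where it is actually applied. Two smaller points: your quadratic penalty splits under expectation as you say, but the cubic penalty $\|g_{t+1}'+h'(x_{t+1})\|^{3/2}$ does not decompose as $\|\nabla F(x_{t+1})\|^{3/2}$ plus a variance term, so extra constants (and extra error terms not present in the stated $G_i$) would appear there; and reproducing the exact coefficients $\tfrac{9a_i^2}{2(3C_1+2A_i\sigma_2)}$ requires the paper's specific three-way split $C_3=C_4=C_5=\tfrac13\bigl(C_1+\tfrac23A_t\sigma_2\bigr)$, which your outline gestures at but does not carry out.
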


\begin{proof}
Let us prove relation \ref{eq:acc-recur1} by induction over t. Since $A_0=0$, for $t=0$, we have
\begin{eqnarray}
A_0 F(x_0)=0=\min_{x\in\bbR^d}\psi_0(x)=\psi_0(v_0).
\end{eqnarray}
Assume that \ref{eq:acc-recur1} is true for some $t\ge0$. Note that for any $x\in\bbR^d$,
\begin{eqnarray*}
\psi_t(x)&=&\sum_{i=0}^{t-1}(f(x_{i+1})+\langle g^{\prime}_{i+1}, x-x_{i+1}\rangle + h(x)) +  \frac{C_1}{2}\|x-x_0\|^2+ \frac{C_2}{3}\|x-x_0\|^3\\
&=&\sum_{i=0}^{t-1}(f(x_{i+1})+\langle g^{\prime}_{i+1}, x-x_{i+1}\rangle )+A_t h(x) +  \frac{C_1}{2}\|x-x_0\|^2+ \frac{C_2}{3}\|x-x_0\|^3, \\
&\equiv&l_t(x)+A_t h(x) +  \frac{C_1}{2}\|x-x_0\|^2+ \frac{C_2}{3}\|x-x_0\|^3. 
\end{eqnarray*}

Note that $l_t(x)$ is a linear function and $A_th(x)$ is a  $A_t\sigma_2$-strongly convex function. 
By \citep[Lemma 4]{nesterov2008accelerating}, $\frac{C_2}{3}\|x-x_0\|^3$ is $\frac{C_2}{2}$-uniformly convex of degree $3$. Meanwhile $\frac{C_1}{2}\|x-x_0\|^2$ is $C_1$-strongly convex. Therefore by the optimality of $v_t$ and the induction assumption, for any $x\in\bbR^d$, we have
\begin{eqnarray*}
\E[\psi_t(x)]&\ge&\E\Big[ \psi_t(v_t) + \frac{C_1+A_t\sigma_2}{2}\|x-v_t\|^2+ \frac{C_2}{6}\|x-v_t\|^3\Big]\\
&\ge& A_t F(x_t) - \sum_{i=0}^{t}G_i+ \frac{C_1 + A_t\sigma_2 }{2}\|x-v_t\|^2+ \frac{C_2}{6}\|x-v_t\|^3\Big],
\end{eqnarray*}
where the expectation is taken on the randomness of $g^{\prime}_{i+1}$ form $i=0$ to $t-1$.

Meanwhile, by taking expectation on the randomness of $g^{\prime}_{t+1}$, we have
\begin{eqnarray*}
\E[\psi_{t+1}(v_{t+1})] &=&\E[\psi_t(v_{t+1})+a_t[f(x_{t+1})+\langle g_{t+1}^{\prime}, v_{t+1}-x_{t+1}\rangle +h(v_{t+1})]\\
 &=&\E[\psi_t(v_{t+1})+a_t[f(x_{t+1})+\langle \nabla f(x_{t+1}) , v_{t+1}-x_{t+1}\rangle\nonumber\\
 &&+\langle g_{t+1}^{\prime}- \nabla f(x_{t+1}), v_{t+1}-v_t\rangle\nonumber\\
 &&+\langle g_{t+1}^{\prime}- \nabla f(x_{t+1}), v_{t}-x_{t+1}\rangle +h(v_{t+1})]\\
  &=&\E[\psi_t(v_{t+1})+a_t(f(x_{t+1})+\langle \nabla f(x_{t+1}) , v_{t+1}-x_{t+1}\rangle\nonumber\\
 &&+\langle g_{t+1}^{\prime}- \nabla f(x_{t+1}), v_{t+1}-v_t\rangle +h(v_{t+1}))],
\end{eqnarray*}
where the last equality is by the unbiasness of $g_{t+1}^{\prime}$ that $$\E[\langle g_{t+1}^{\prime}- \nabla f(x_{t+1}), v_{t}-x_{t+1}\rangle] = \langle \E[g_{t+1}^{\prime}- \nabla f(x_{t+1})], v_{t}-x_{t+1}\rangle = 0.$$

Therefore, by taking expectation on the randomness of $g^{\prime}_{i+1}$ form $i=0$ to $t$, we have
\begin{eqnarray*}
\E[\psi_{t+1}(v_{t+1})] 
 &=& \E[\psi_t(v_{t+1})+a_t(f(x_{t+1})+\langle g_{t+1}^{\prime}, v_{t+1}-x_{t+1}\rangle +h(v_{t+1}))]\\
&=&\E[\psi_t(v_{t+1})+a_t(f(x_{t+1})+\langle \nabla f(x_{t+1}) , v_{t+1}-x_{t+1}\rangle\nonumber\\
 &&+\langle g_{t+1}^{\prime}- \nabla f(x_{t+1}), v_{t+1}-v_t\rangle +h(v_{t+1}))],\nonumber\\
 &\ge&\E\Big[A_t F(x_t)- \sum_{i=0}^{t}G_i + \frac{C_1 + A_t\sigma_2 }{2}\|v_{t+1}-v_t\|^2+ \frac{C_2}{6}\|v_{t+1}-v_t\|^3 \\
 &&+a_t(f(x_{t+1})+\langle \nabla f(x_{t+1}) , v_{t+1}-x_{t+1}\rangle\nonumber\\
 &&+\langle g_{t+1}^{\prime}- \nabla f(x_{t+1}), v_{t+1}-v_t\rangle +h(v_{t+1}))\Big]\\
  &\ge&\E\Big[A_t f(x_t)  - \sum_{i=0}^{t}G_i+ A_t h(x_t) + \frac{C_1 + A_t\sigma_2 }{2}\|v_{t+1}-v_t\|^2+ \frac{C_2}{6}\|v_{t+1}-v_t\|^3 \Big]\\
&&+a_t(f(x_{t+1})+\langle \nabla f(x_{t+1}) , v_{t+1}-x_{t+1}\rangle\nonumber\\
 &&+\langle g_{t+1}^{\prime}- \nabla f(x_{t+1}), v_{t+1}-v_t\rangle +h(v_{t+1}))\Big]
 \end{eqnarray*}
By the convexity of $f(x)$ and $h(x)$, for a $h^{\prime}(x_{t+1})\in \partial h(x_{t+1})$ such that
\begin{eqnarray*}
f(x_t)&\ge& f(x_{t+1}) + \langle \nabla f(x_{t+1}), x_t-x_{t+1}\rangle,\\
h(x_t)&\ge& h(x_{t+1}) + \langle \nabla h(x_{t+1}), x_t-x_{t+1}\rangle,\\
h(v_{t+1})&\ge& h(x_{t+1}) + \langle h^{\prime}(x_{t+1}), v_{t+1}-x_{t+1}\rangle.
\end{eqnarray*}
Substituting these inequalities above, it follows that
\begin{eqnarray*}
\E[\psi_{t+1}(v_{t+1})] &\ge&\E\Big[ A_{t+1} F(x_{t+1})  - \sum_{i=0}^{t}G_i+ \langle \nabla F(x_{t+1}), A_tx_t-A_tx_{t+1}\rangle \\
 &&+a_t \langle \nabla F(x_{t+1}), v_{t+1}-x_{t+1}\rangle+a_t \langle g_{t+1}^{\prime}- \nabla f(x_{t+1}), v_{t+1}-v_{t}\rangle \\
&& + \frac{C_1 + A_t\sigma_2 }{2}\|v_{t+1}-v_t\|^2+ \frac{C_2}{6}\|v_{t+1}-v_t\|^3\Big].
\end{eqnarray*}
Then it follows that,
\begin{eqnarray*}
\E[\psi_{t+1}(v_{t+1})]  &\overset{\circlenum{1}}{\ge}&\E\Big[A_{t+1} F(x_{t+1})  - \sum_{i=0}^{t}G_i+ A_{t+1}\langle \nabla F(x_{t+1}), y_t-x_{t+1}\rangle \\
 &&+a_t \langle \nabla F(x_{t+1}), v_{t+1}-v_t\rangle+a_t \langle g_{t+1}^{\prime}- \nabla f(x_{t+1}), v_{t+1}-v_{t}\rangle\\
 &&+ \frac{C_1 + A_t\sigma_2 }{2}\|v_{t+1}-v_t\|^2+ \frac{C_2}{6}\|v_{t+1}-v_t\|^3\Big]\\
&\overset{\circlenum{2}}{\ge}&\E\Big[A_{t+1} F(x_{t+1}) - \sum_{i=0}^{t}G_i \\
 &&+ A_{t+1}\langle  q_{t+1}, y_t-x_{t+1}\rangle - A_{t+1}\langle g_t-\nabla f(y_t) - \nabla \tilde{f}_{\eta}(x_{t+1}; y_t), y_t-x_{t+1}\rangle\\
 &&+a_t \langle  q_{t+1},  v_{t+1}-v_t\rangle  - a_t \langle  g_t-\nabla f(y_t) - \nabla \tilde{f}_{\eta}(x_{t+1}; y_t),  v_{t+1}-v_t\rangle\\
 &&+a_t \langle g_{t+1}^{\prime}- \nabla f(x_{t+1}), v_{t+1}-v_t\rangle \\
 &&+ \frac{C_1 + A_t\sigma_2 }{2}\|v_{t+1}-v_t\|^2+ \frac{C_2}{6}\|v_{t+1}-v_t\|^3\Big]\\
&\overset{\circlenum{3}}{\ge}&\E\Big[A_{t+1} F(x_{t+1}) - \sum_{i=0}^{t}G_i \\
 &&+A_{t+1}\langle q_{t+1}, y_t-x_{t+1}\rangle +a_t \langle  q_{t+1},  v_{t+1}-v_t\rangle\\
 &&-\frac{A_{t+1}}{\mu_t}\|g_t-\nabla f(y_t) - \nabla \tilde{f}_{\eta}(x_{t+1}; y_t)\|^2- \frac{A_{t+1}\mu_t}{4}\|y_t-x_{t+1}\|^2\\
 && -
\frac{a_t^2}{2C_3}\| g_t-\nabla f(y_t) - \nabla \tilde{f}_{\eta}(x_{t+1}; y_t)\|^2-\frac{C_3}{2}\|v_{t+1}-v_t\|^2
 \\
 &&-  \frac{a_t^2}{2C_4}\| g_{t+1}^{\prime}- \nabla f(x_{t+1})\|^2-\frac{C_4}{2}\|v_{t+1}-v_t\|^2 \\
 && + \frac{C_1 + A_t\sigma_2 }{2}\|v_{t+1}-v_t\|^2+ \frac{C_2}{6}\|v_{t+1}-v_t\|^3\Big],\\
 &\overset{\circlenum{4}}{=}&
 \E\Big[A_{t+1} F(x_{t+1}) - \sum_{i=0}^{t}G_i \\
 &&+A_{t+1}\langle q_{t+1}, y_t-x_{t+1}\rangle +a_t \langle  q_{t+1},  v_{t+1}-v_t\rangle- \frac{A_{t+1}\mu_t}{4}\|y_t-x_{t+1}\|^2\\
 &&-\left(\frac{A_{t+1}}{\mu_t}+\frac{a_t^2}{2C_3}\right) \|g_t-\nabla f(y_t) - \nabla \tilde{f}_{\eta}(x_{t+1}; y_t)\|^2\\
 % && -\frac{C_3}{2}\|v_{t+1}-v_t\|^2
 % \\
 &&-  \frac{a_t^2}{2C_4}\| g_{t+1}^{\prime}- \nabla f(x_{t+1})\|^2\\
 && +\left(\frac{C_1+\frac{2}{3} A_t\sigma_2-C_3-C_4}{2}\right)\|v_{t+1}-v_t\|^2\\
 &&+ \left(\frac{A_t\sigma_2}{6\|v_{t+1}-v_t\|} + \frac{C_2  }{6}\right)\|v_{t+1}-v_t\|^3\Big],\\
 %  &\ge&A_{t+1} F(x_{t+1})  - \sum_{i=0}^{t}G_i+ A_{t+1}\langle \nabla F(x_{t+1}), y_t-x_{t+1}\rangle \\
 % &&+a_t \langle g_t-\nabla f(y_t)+\nabla F(x_{t+1}) - \nabla \tilde{f}_{\eta}(x_{t+1}; y_t), v_{t+1}-v_t\rangle-a_t \langle g_t-\nabla f(y_t) - \nabla \tilde{f}_{\eta}(x_{t+1}; y_t), v_{t+1}-v_t\rangle\\
 % &&+a_t \langle g_{t+1}^{\prime}- \nabla f(x_{t+1}), v_{t+1}-v_t\rangle+a_t \langle g_{t+1}^{\prime}- \nabla f(x_{t+1}), v_t-x_{t+1}\rangle \\
 % &&+ \frac{C_1 + A_t\sigma_2 }{2}\|v_{t+1}-v_t\|^2+ \frac{C_2}{6}\|v_{t+1}-v_t\|^3\\
 % &=&A_{t+1} F(x_{t+1})  - \sum_{i=0}^{t}G_i+ A_{t+1}\langle \nabla F(x_{t+1}), y_t-x_{t+1}\rangle \\
 % &&+a_t \langle q, v_{t+1}-v_t\rangle-a_t \langle g_t-\nabla f(y_t) - \nabla \tilde{f}_{\eta}(x_{t+1}; y_t), v_{t+1}-v_t\rangle\\
 % &&+a_t \langle g_{t+1}^{\prime}- \nabla f(x_{t+1}), v_{t+1}-v_t\rangle+a_t \langle g_{t+1}^{\prime}- \nabla f(x_{t+1}), v_t-x_{t+1}\rangle \\
 % &&+ \frac{C_1 + A_t\sigma_2 }{2}\|v_{t+1}-v_t\|^2+ \frac{C_2}{6}\|v_{t+1}-v_t\|^3\\
 %  &=&A_{t+1} F(x_{t+1}) - \sum_{i=0}^{t}G_i + A_{t+1}\langle \nabla F(x_{t+1}), y_t-x_{t+1}\rangle \\
 % &&+a_t \langle q, v_{t+1}-v_t\rangle-a_t \langle g_t-\nabla f(y_t) - \nabla \tilde{f}_{\eta}(x_{t+1}; y_t), v_{t+1}-v_t\rangle\\
 % &&+a_t \langle g_{t+1}^{\prime}- \nabla f(x_{t+1}), v_{t+1}-v_t\rangle+a_t \langle g_{t+1}^{\prime}- \nabla f(x_{t+1}), v_t-x_{t+1}\rangle \\
 % &&+ \frac{C_1 + A_t\sigma_2 }{2}\|v_{t+1}-v_t\|^2+ \frac{C_2}{6}\|v_{t+1}-v_t\|^3\\
\end{eqnarray*}
where \circlenum{1} is by $y_t = (1-\alpha_t)x_t+\alpha_t v_t=\frac{A_t}{A_{t+1}}x_t+\frac{a_t}{A_{t+1}}v_t$, \circlenum{2} is by the definition of $q_{t+1}=g_t-\nabla f(y_t)+\nabla F(x_{t+1}) - \nabla \tilde{f}_{\eta}(x_{t+1}; y_t)$ in Lemma \ref{eq:acc-key} and simple arrangements,  \circlenum{3} is by triangle inequality, $C_3, C_4$ are constants, $\circlenum{4}$ is by simple rearrangements.

Then by defining $C_3 = C_4 = \frac{1}{3}\big(C_1+\frac{2}{3}A_t\sigma_2\big)$, then by the definition of $\{G_{t}\}$, we have 
\begin{eqnarray*}
\E[\psi_{t+1}(v_{t+1})] &\overset{\circlenum{1}}{=}&\E\Bigg[A_{t+1} F(x_{t+1}) - \sum_{i=0}^{t+1}G_i \\
 &&+A_{t+1}\langle q_{t+1}, y_t-x_{t+1}\rangle +a_t \langle  q_{t+1},  v_{t+1}-v_t\rangle- \frac{A_{t+1}\mu_t}{4}\|y_t-x_{t+1}\|^2\\
 && +\left(\frac{C_1+\frac{2}{3} A_t\sigma_2}{6}\right)\|v_{t+1}-v_t\|^2\\
 &&+ \left(\frac{A_t\sigma_2}{6\|v_{t+1}-v_t\|} + \frac{C_2  }{6}\right)\|v_{t+1}-v_t\|^3\Bigg],\\
 &\overset{\circlenum{2}}{\ge}&\E\Bigg[A_{t+1} F(x_{t+1}) - \sum_{i=0}^{t+1}G_i \\
 &&+A_{t+1}\min\left\{ \frac{\|q_{t+1}\|^2}{3\mu_t}  , \sqrt{\frac{\|q_{t+1}\|^3}{4L_3+2\eta}} \right\} +a_t \langle  q_{t+1},  v_{t+1}-v_t\rangle\\
 && +\left(\frac{C_1+\frac{2}{3} A_t\sigma_2}{6}\right)\|v_{t+1}-v_t\|^2\\
 &&+ \left(\frac{A_t\sigma_2}{6\|v_{t+1}-v_t\|} + \frac{C_2  }{6}\right)\|v_{t+1}-v_t\|^3\Bigg],\\
  &\overset{\circlenum{3}}{\ge}&\E\Bigg[A_{t+1} F(x_{t+1}) - \sum_{i=0}^{t+1}G_i \\
 &&+\min\left\{ \frac{\|q_{t+1}\|^2}{3\mu_t}  , \sqrt{\frac{\|q_{t+1}\|^3}{4L_3+2\eta}} \right\} +a_t \langle  q_{t+1},  v_{t+1}-v_t\rangle\\
 && +\left(\frac{C_1+\frac{2}{3} A_t\sigma_2}{6}\right)\|v_{t+1}-v_t\|^2\\
 &&+ \left(\frac{A_t\sigma_2}{6R} + \frac{C_2  }{6}\right)\|v_{t+1}-v_t\|^3\Bigg],\\
 \end{eqnarray*}
where \circlenum{1} is by the definition of $G_{t+1}$, \circlenum{2} is by Lemma \ref{eq:acc-key}, \circlenum{3} is by the assumption that $\|v_{t+1}-v_t\|\le R$.

Then the following task is to prove the error term
\begin{eqnarray}
&&A_{t+1}\min\left\{ \frac{\|q_{t+1}\|^2}{3\mu_t}  , \sqrt{\frac{\|q_{t+1}\|^3}{4L_3+2\eta}} \right\} +a_t \langle  q_{t+1},  v_{t+1}-v_t\rangle +\left(\frac{C_1+\frac{2}{3} A_t\sigma_2}{6}\right)\|v_{t+1}-v_t\|^2\nonumber\\
&&\quad\quad+ \left(\frac{A_t\sigma_2}{6R} + \frac{C_2  }{6}\right)\|v_{t+1}-v_t\|^3\ge 0 \label{eq:error}
\end{eqnarray}

By Lemma \ref{lem:inner-prod}, we have
\begin{eqnarray}
a_t \langle q_{t+1}, v_{t+1}-v_t\rangle \ge - \frac{a_t^2}{2C_5} \|q_{t+1}\|^2 - \frac{C_5}{2}\|v_{t+1}-v_t\|^2,
\end{eqnarray}
and 
\begin{eqnarray}
a_t \langle q_{t+1}, v_{t+1}-v_t\rangle \ge - \frac{C_6}{3}\|v_{t+1}-v_t\|^3-\frac{2a_t^{3/2}}{3}\left(\frac{1}{C_6}\right)^{1/2}\|q_{t+1}\|^{3/2},
\end{eqnarray}
where $C_5, C_6$ are  constants.

Therefore by setting 
\begin{eqnarray}
C_5 = \frac{C_1+\frac{2}{3}A_t\sigma_2}{3} \label{eq:1111}\\
C_6 = \frac{A_t\sigma_2 +C_2 R}{2R} \label{eq:1112}
\end{eqnarray}
and assuming that 
\begin{eqnarray}
A_{t+1}\sqrt{\frac{1}{4L_3+2\eta}}- \frac{2a_t^{3/2}}{3}\left(\frac{1}{C_6}\right)^{1/2}\ge0 \label{eq:1113}\\
\frac{A_{t+1}}{3\mu_t}-\frac{a_t^2}{2C_5}\ge 0  \label{eq:1114}
\end{eqnarray}

Then the error term \eqref{eq:error} will be $\ge0$.

The conditions in \eqref{eq:1111}-\eqref{eq:1114} is equivalent to
\begin{eqnarray*}
C_1&\ge&\frac{9\mu_ta_t^2}{2A_{t+1}}-\frac{2}{3}A_t\sigma_2\\
C_2 &\ge& \frac{16a_t^3(2L_3+\eta)}{9A_{t+1}^2}-\frac{A_t\sigma_2}{R}\\
 &=& \frac{32a_t^3L_3}{3A_{t+1}^2}-\frac{A_t\sigma_2}{R}\\
\end{eqnarray*}

Lemma \ref{lem:recur} is proved.

\end{proof}

\begin{proof}[Proof of Theorem \ref{thm:prime-result}]
By Lemmas \ref{lem:111} and \ref{lem:recur}, and the optimality of $v_t$, by setting $x=x^*$ in Lemma \ref{lem:111}, we have
\begin{eqnarray}
\E[A_t F(x_t)]&\le& \E\Big[\psi_t(v_t)+\sum_{i=0}^t G_i \Big]\nonumber\\
&\le& \E\Big[\psi_t(x^*)+\sum_{i=0}^t G_i \Big]\nonumber\\
&\le& A_t F(x^*) + \frac{C_1}{2}\|x^*-x_0\|^2+\frac{C_2}{3}\|x^*-x_0\|^3+\sum_{i=0}^t G_i
\nonumber\\
&=& A_t F(x^*) + \frac{C_1}{2}\|x^*-x_0\|^2+\frac{C_2}{3}\|x^*-x_0\|^3+\sum_{i=1}^t G_i\nonumber
\end{eqnarray}
Dividing $A_t$ on both sides, we get Theorem \ref{thm:prime-result}.
\end{proof}

\begin{proof}[Proof of Corollary \ref{thm:acc-online}]
By the setting in Theorems \ref{thm:acc-convex} and \ref{thm:acc-strong-convex} and using a similar analysis to the analysis in the proof of
Corollary \ref{thm:online}, we can prove Corollary \ref{thm:acc-online}. 
\end{proof}

\begin{proof}[Proof of Corollary \ref{thm:acc-finite}]
In the finite-sum setting, the analysis is similar to the online stochastic setting. The main difference is that the samples of gradient and Hessian can be larger than the number $n$ of the terms in the sum. Therefore, after the number of the samples evaluated by Lemmas \ref{lem:vector}  and \ref{lem:matrix} is larger than $n$, then we use the exact gradient in the following iteration. By this setting, we can know that in the convex setting, we only need $\min\{\tilde{\mathcal{O}}(\epsilon^{-2}), \tilde{\mathcal{O}}(n\epsilon^{-1/3})\}$   in the convex setting and $\min\left\{\tilde{\mathcal{O}}(\sigma_2^{-2/3}\epsilon^{-1}),\tilde{\mathcal{O}}(\sigma_2^{-1/3}n)\right\} $ in the strongly convex setting.
\end{proof}

\section{Proof of Section \ref{sec:cubic-svrg}}

Compared with Prox-SVRG in \cite{xiao2014proximal}, the difference is only the number of the inner iteration $M_s$ and the learning rate for each outer iteration $\tau_s$. Therefore the conclusions of a fixed stage that only use the smoothness property in \cite{xiao2014proximal} can still hold for Alg. \ref{alg:cubic}. 
By the proof of \citep[Theorem 1]{xiao2014proximal}, we have Lemma \ref{lem:basic}.
\begin{lemma}[\cite{xiao2014proximal}]\label{lem:basic}
In the $s$-th outer iteration, if we have the following inequality. 
\begin{eqnarray}
\E [P(\tilde{w}_{s}) - P(w^*)]\!\!\!\!\!&\le&\!\!\!\!\! \frac{1}{2\tau_s(1-4L_{2}\tau_s)M_s} \|\tilde{w}_{s-1}-w^*\|_2^2 + \frac{4L_{2}\tau_s(M_s+1)}{(1-4L_2 \tau_s)M_s}(P(\tilde{w}_{s-1}) - P(w^*)),\label{eq:cubic-1}
\end{eqnarray}
where 
 the expectation is taken on the randomness of the current iteration (the randomness of all the previous iteration is fixed).
\end{lemma}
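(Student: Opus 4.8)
The statement is precisely the single outer-iteration (stage-$s$) bound for importance-sampled Prox-SVRG, and the author's justification is that within one stage neither the cubic regularizer nor any strong/uniform convexity of $r$ ever enters: only the smoothness of the smooth part $\bar\psi(w) \overset{\defi}{=} \frac{1}{n}\sum_{i=1}^n\psi_i(w)$ and the convexity of $P$ are used, so the Xiao--Zhang per-stage analysis transfers verbatim once $\tau_s, M_s$ are treated as fixed constants. Accordingly the plan is to fix the stage, write $\tilde w = \tilde w_{s-1}$, $\tau = \tau_s$, $M = M_s$, and reproduce that argument for the inner iterates $w_0 = \tilde w, w_1, \dots, w_M$, taking all expectations over the inner sample indices $i_1,\dots,i_M$ only (the earlier stages frozen).

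First I would record the two probabilistic facts about the corrected gradient $\tilde\nabla_k = (\nabla\psi_{i_k}(w_{k-1}) - \nabla\psi_{i_k}(\tilde w))/(q_{i_k} n) + \tilde\mu$. \textbf{Unbiasedness:} since $i_k=j$ with probability $q_j$, one has $\E_{i_k}[\frac{1}{q_{i_k}n}\nabla\psi_{i_k}(\cdot)] = \frac{1}{n}\sum_j\nabla\psi_j(\cdot)$, whence $\E_{i_k}[\tilde\nabla_k \mid w_{k-1}] = \nabla\bar\psi(w_{k-1})$ because $\tilde\mu = \nabla\bar\psi(\tilde w)$. \textbf{Variance bound:} I claim $\E_{i_k}\|\tilde\nabla_k - \nabla\bar\psi(w_{k-1})\|^2 \le 4L_2[(P(w_{k-1}) - P(w^*)) + (P(\tilde w) - P(w^*))]$. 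This is the heart of the proof. It follows from the ``smoothness $\Rightarrow$ Bregman domination'' estimate $\|\nabla\psi_i(w) - \nabla\psi_i(w^*)\|^2 \le 2L_i(\psi_i(w) - \psi_i(w^*) - \langle\nabla\psi_i(w^*), w-w^*\rangle)$ applied to each component, where the importance weights $q_i = \|H_i\|/\sum_j\|H_j\|$ are chosen exactly so that the reweighting factor $1/(q_i n)$ turns the per-component smoothness constants into the average $L_2 = \frac{1}{n}\sum_i\|H_i\|$ appearing in the bound; summing the component Bregman divergences and absorbing the linear terms by convexity of $r$ collapses the estimate to the composite gaps $P(\cdot) - P(w^*)$.

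Next I would run the per-inner-step proximal inequality. For $w_k = \text{prox}_{\tau r}(w_{k-1} - \tau\tilde\nabla_k)$, writing $\Delta_k = \tilde\nabla_k - \nabla\bar\psi(w_{k-1})$ and introducing the auxiliary full-gradient step $\bar w_k = \text{prox}_{\tau r}(w_{k-1} - \tau\nabla\bar\psi(w_{k-1}))$, the three-point prox lemma gives
\[
P(w_k) \le P(w^*) + \frac{1}{2\tau}\left(\|w_{k-1} - w^*\|^2 - \|w_k - w^*\|^2\right) + \Delta_k^\top(w^* - w_k).
\]
Splitting $\Delta_k^\top(w^* - w_k) = \Delta_k^\top(w^* - \bar w_k) + \Delta_k^\top(\bar w_k - w_k)$ and taking $\E_{i_k}$, the first cross term vanishes by unbiasedness (as $\bar w_k$ is $i_k$-independent), while $\|\bar w_k - w_k\| \le \tau\|\Delta_k\|$ from nonexpansiveness of the prox bounds the second by $\tau\E_{i_k}\|\Delta_k\|^2$. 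Combining with the variance bound yields, with $a_k \overset{\defi}{=} \E[P(w_k) - P(w^*)]$, $d_k \overset{\defi}{=} \E\|w_k - w^*\|^2$ and $b \overset{\defi}{=} P(\tilde w) - P(w^*)$, the recursion $a_k \le \frac{1}{2\tau}(d_{k-1} - d_k) + 4L_2\tau(a_{k-1} + b)$.

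Finally I would sum over $k = 1,\dots,M$: the $d_k$ telescope to $d_0 = \|\tilde w_{s-1} - w^*\|^2$, and re-indexing $\sum_{k=1}^M a_{k-1} = \sum_{k=0}^{M-1}a_k \le a_0 + \sum_{k=1}^M a_k = b + \sum_{k=1}^M a_k$ (using $a_M \ge 0$ and $a_0 = b$) moves the $P(w_{k-1})$ terms to the left, producing the factor $(1 - 4L_2\tau)$ and, crucially, the extra boundary copy of $b$ that upgrades $4L_2\tau M b$ to $4L_2\tau(M+1)b$. Dividing by $(1-4L_2\tau)M$ and invoking convexity of $P$ with $\tilde w_s = \frac{1}{M}\sum_{k=1}^M w_k$ (Jensen, giving $\E[P(\tilde w_s) - P(w^*)] \le \frac{1}{M}\sum_k a_k$) delivers \eqref{eq:cubic-1} exactly. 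The only genuinely delicate step is the variance bound; everything else is standard proximal-SVRG bookkeeping, and I would emphasize that none of it uses the uniform or strong convexity of $r$ --- precisely why the Xiao--Zhang stage estimate applies unchanged, with the cubic structure entering only later through the stage-dependent choices of $M_s$ and $\tau_s$.
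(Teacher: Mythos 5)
Your proposal is correct and takes essentially the same route as the paper: the paper proves Lemma \ref{lem:basic} simply by citing the per-stage analysis of \citep[Theorem 1]{xiao2014proximal}, observing that within a fixed stage only smoothness is used so the argument is unaffected by the stage-dependent $M_s,\tau_s$, and your write-up is precisely that Xiao--Zhang argument spelled out in full (unbiasedness and the importance-weighted variance bound with constant $L_2$, the three-point prox inequality with the auxiliary full-gradient step $\bar w_k$, and the index shift $a_0=b$ that produces the $(M_s+1)$ factor). The only caveat worth noting is cosmetic: the paper writes $\psi_i(w)=w^TH_iw$ (evidently missing the factor $\tfrac12$ needed to match \eqref{eq:subprob}), and your constants implicitly use the intended $\psi_i(w)=\tfrac12 w^TH_iw$, which is the consistent reading.
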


\begin{proof}[Proof of Theorem \ref{thm:converge}]
If $P(\tilde{w}_{s-1})-P(w^*)\ge \frac{1}{m^3}$, then by the steps 5-6, we have $M_s=100\kappa m$ and $\tau_s = \tau_0m^{-\frac{1}{2}}(P(\tilde{w})-P(w^*))^{-\frac{1}{3}}$. 
Then it follows that
\begin{eqnarray}
&&\E[P(\tilde{w}_s)-P(w^*)]\nonumber\\
&\overset{\circlenum{0}}{\le}& \frac{1}{2\tau_s(1-4L_{2}\tau_s)M_s} \|\tilde{w}_{s-1}-w^*\|_2^2 \nonumber\\
&&\quad+ \frac{4L_{2}\tau_s(M_s+1)}{(1-4L_2 \tau_s)M_s}(P(\tilde{w}_{s-1}) - P(w^*))\nonumber\\
&\overset{\circlenum{1}}{\le}&\frac{1}{2\tau_s(1-4L_{2}\tau_s)M_s} \left(\frac{6}{\sigma_3}\right)^{2/3}(P(\tilde{w}_{s-1}) - P(w^*))^{2/3}  \nonumber\\
&&\quad+ \frac{4L_{2}\tau_s(M_s+1)}{(1-4L_2 \tau_s)M_s}(P(\tilde{w}_{s-1}) - P(w^*))\nonumber\\
&\overset{\circlenum{2}}{=}& \Bigg( \frac{1}{200\kappa\tau_s(1-4L_{2}\tau_s)m(P(\tilde{w}_{s-1}) - P(w^*))^{1/3} } \left(\frac{6}{\sigma_3}\right)^{2/3}  \nonumber\\
&&\quad+ \frac{4L_{2}\tau_s(M_s+1)}{(1-4L_2 \tau_s)M_s}\Bigg)(P(\tilde{w}_{s-1}) - P(w^*))\nonumber\\
&\overset{\circlenum{3}}{=}&\Bigg( \frac{1}{100L_2\tau_0(1-4L_{2}\tau_s)m^{1/2}(P(\tilde{w}_{s-1}) - P(w^*))^{1/6} } \nonumber\\
&&\quad+ \frac{4L_{2}\tau_0(M_s+1)(P(\tilde{w}_{s-1}) - P(w^*))^{-1/6} }{m^{1/2}(1-4L_2 \tau_s)M_s}\Bigg)(P(\tilde{w}_{s-1}) - P(w^*))\nonumber\\
&{=}&\frac{1}{\sqrt{m}}\left(\frac{1}{100L_2\tau_0(1-4L_2\tau_s)}+\frac{4L_2\tau_0(M+1)}{(1-4L_2\tau_s)M_s}\right)(P(\tilde{w}_{s-1}) - P(w^*))^{5/6}\nonumber\\
&\overset{\circlenum{4}}{=}& \frac{1}{\sqrt{m}}\left(\frac{1}{100L_2\tau_0(1-4L_2\tau_0)} +\frac{4L_2\tau_0(100\kappa m+1)}{100(1-4L_2\tau_0)\kappa m}\right)(P(\tilde{w}_{s-1}) - P(w^*))^{5/6}\nonumber\\
&\overset{\circlenum{5}}{=}& \frac{\rho}{\sqrt{m}}(P(\tilde{w}_{s-1}) - P(w_*))^{5/6},\nonumber
\end{eqnarray}
where \circlenum{0} is by Lemma \ref{lem:basic}, \circlenum{1} is by uniform convexity in Assumption \ref{ass:sigma3-smooth}, \circlenum{2} is by the value of $M_s$,  \circlenum{3} is by the definition of $\kappa$ and the value of $\eta_s$, \circlenum{4} is by the value of $M_s$, and \circlenum{4} is by the definition of $\rho$.

Because $x^{5/6}$ is a concave function,  taking expectation on the randomness on all the history, we have 
\begin{eqnarray}
\E\left[P(\tilde{w}_{s}) - P(w_*)\right]&\le& \frac{\rho}{\sqrt{m}}\E[(P(\tilde{w}_{s-1}) - P(w_*))^{5/6}] \nonumber\\
&\le& \frac{\rho}{\sqrt{m}}(\E[P(\tilde{w}_{s-1}) - P(w_*)])^{5/6}.\label{eq:proof-case1-2}
\end{eqnarray}
Telescoping \eqref{eq:proof-case1-2}, we obtain the result of the case $s\le s_1$. 

When $s>s_1$, $i.e.,$
 $P(\tilde{w}_{s-1})-P(w_*)\le \frac{1}{m^3}$, which is equivalent to
 \begin{eqnarray}
 \rho(P(\tilde{w}_{s-1}) -P(w_*))\le \frac{\rho}{\sqrt{m}}[P(\tilde{w}_{s-1})-P(w_*)]^{\frac{5}{6}},\label{eq: case1-3}
 \end{eqnarray}
 then let Alg. \ref{alg:cubic} converge with an exponential rate $\rho$ will be faster. By the setting of $M_s$ and $\tau_s$, and use a similar analysis, we have 
$$\E[P(\tilde{w}_{s-1}) -P(w_*)]\le\rho(P(\tilde{w}_{s-1}) -P(w_*)).$$
 Telescoping the above inequality, we obtain the result of the case $s>s_1$. 

Theorem \ref{thm:converge} is proved.

\end{proof}

\end{document}